%% file: affine_sos.tex
\documentclass[onefignum,onetabnum]{siamonline220329}
\input{ex_shared}

\ifpdf
\hypersetup{pdftitle={Restricted-Multiplier Positivstellensatze for Polynomial Optimization with Affine Parameters},pdfauthor={Jie Wang}}
\fi
\begin{document}
\maketitle
\begin{abstract}
We develop positivity certificates for polynomials that are affine in auxiliary parameters, with sum-of-squares multipliers depending only on the principal variables. Under a restricted Archimedean order-unit condition, every strictly positive scalar polynomial admits such a certificate. We construct a convergent restricted moment-SOS hierarchy for polynomial optimization problems with affine parameters and establish a flatness criterion for global optimality detection and global minimizer recovery. The scalar theorem extends to polynomial-matrix constraints with any number of affine parameters. For matrix-valued objectives, pointwise positivity is sufficient with one parameter but can fail with two or more parameters; uniform free positivity provides a general sufficient replacement. Applications include robust linear programming, robust Lyapunov design, min-max polynomial optimization, analysis and control of input-affine dynamical systems. Numerical experiments show that the proposed approach substantially reduces semidefinite block sizes and solution times while incurring little or no degradation in bound quality for most test cases.
\end{abstract}
\begin{keywords}
Positivstellensatz, sum of squares, affine parameter, moment-SOS hierarchy, robust optimization
\end{keywords}
\begin{MSCcodes}
Primary, 90C23; Secondary, 14P10, 13J30, 90C22
\end{MSCcodes}
\input{sections/01_introduction}
\input{sections/02_preliminaries}
\input{sections/03_scalar}

\input{sections/04_moment}
\input{sections/05_hierarchy}
\input{sections/06_extensions}
\input{sections/07_applications}
\input{sections/08_experiments}
\input{sections/10_conclusions}
% \appendix
% \input{sections/appendices}

\section*{Acknowledgements}
The author acknowledges the use of ChatGPT-6 Astra to assist with brainstorming, mathematical development, and manuscript drafting. The author is solely responsible for the final content, analysis, and conclusions.

\section*{Funding}
This work was jointly funded by the National Key R\&D Program of China under grant No.~2023YFA1009401 and the National Natural Science Foundation of China under grant Nos.~12201618 and 12171324.

% \section*{Conflict of interest}
% The authors declare that they have no conflicts of interest.

% \section*{Data availability}
% The authors confirm that all data generated or analyzed during this study are included in this article.

\bibliographystyle{siamplain}
\bibliography{references}
\end{document}

%% file: ex_shared.tex
\usepackage{amsfonts,amssymb,mathtools,mathrsfs,booktabs,graphicx}
\usepackage{xcolor}
\usepackage{enumitem}
\setlist[enumerate]{leftmargin=.35in}
\setlist[itemize]{leftmargin=.35in}

\newsiamremark{remark}{Remark}
\newsiamremark{example}{Example}
\newsiamremark{assumption}{Assumption}
\headers{SOS Certificates with Affine Parameters}{Jie Wang}
\title{Restricted-Multiplier Positivstellens\"atze for Polynomial Optimization with Affine Parameters}
\author{Jie Wang\thanks{State Key Laboratory of Mathematical Sciences, Academy of Mathematics and Systems Science, Chinese Academy of Sciences, Beijing, China (\email{wangjie212@amss.ac.cn}).}}
\newcommand{\R}{\mathbb R}
\newcommand{\N}{\mathbb N}
\newcommand{\A}{\mathcal A}
\newcommand{\V}{\mathcal V}
\newcommand{\M}{\mathcal M}
\newcommand{\Q}{\mathcal Q}
\newcommand{\I}{\mathcal I}
\newcommand{\Sym}{\mathbb S}
\newcommand{\sos}{\Sigma[x]}
\newcommand{\la}{\lambda}
\newcommand{\eps}{\varepsilon}
\newcommand{\ip}[2]{\langle #1,#2\rangle}
\newcommand{\norm}[1]{\lVert #1\rVert}
\DeclareMathOperator{\supp}{supp}
\DeclareMathOperator{\Aff}{Aff}
\DeclareMathOperator{\tr}{tr}
\DeclareMathOperator{\rank}{rank}

\DeclareMathOperator{\conv}{conv}

%% file: sections/01_introduction.tex
\section{Introduction}\label{sec:intro}
Positivstellens\"atze translate polynomial positivity on semialgebraic sets into algebraic certificates and form a central bridge between real algebraic
geometry and polynomial optimization \cite{krivine1964anneaux,Marshall2008,Putinar1993,schmudgen2017moment,stengle1974nullstellensatz}.
Combined with sums of squares (SOS), they lead to convergent hierarchies of semidefinite relaxations for global polynomial optimization \cite{Lasserre2001,parrilo2003semidefinite}. The related computational
cost, however, grows rapidly with the number of polynomial variables and the relaxation order. This has motivated substantial effort to develop structured positivity certificates with lower computational complexity; structures considered in the literature include correlative sparsity \cite{WakiEtAl2006}, term sparsity \cite{WangMagronLasserre2021,wang2022cs}, symmetry \cite{riener2013exploiting}, subalgebras \cite{Schmudgen2024}, low rank \cite{gaggioli2025global}, composition and tensor-train structure \cite{gaggioli2026composition}.

In this paper, we study polynomial positivity on sets of the form
\begin{equation}\label{eq:K}
 K\coloneqq \{(x,\la)\in\R^n\times\R^r:g_i(x,\la)\geq0,
          \ i=1,\ldots,m\},
\end{equation}
where both the polynomial to be certified and the constraint polynomials are
affine in an auxiliary parameter vector $\la$:
\begin{equation}\label{eq:affine}
 f(x,\la)=f_0(x)+\sum_{k=1}^r\la_k f_k(x),\qquad
 g_i(x,\la)=g_{i0}(x)+\sum_{k=1}^r\la_k g_{ik}(x).
\end{equation}
We call $x$ the principal variables and $\la$ the parameter variables.
This structure appears naturally in robust linear programming, robust control design, parameter-dependent feasibility, min-max polynomial optimization, and system analysis and control \cite{MillerSznaier2025}.

A standard approach is to regard $(x,\la)$ as a single block of variables and
apply Putinar's Positivstellensatz \cite{Putinar1993}. Under the usual Archimedean hypothesis,
strict positivity on $K$ then yields
\begin{equation}\label{eq:jointcert}
 f(x,\la)=\widehat\sigma_0(x,\la)+\sum_{i=1}^m\widehat\sigma_i(x,\la) g_i(x,\la),
 \qquad \widehat\sigma_i \text{ are SOS}.
\end{equation}
This representation underlies the standard joint-variable moment-SOS
hierarchy, but it does not preserve the affine parameter structure. The SOS
multipliers may contain arbitrary powers of $\la$, and their Gram matrices
are indexed by monomials in all $n+r$ variables. Consequently, the
positive semidefinite (PSD) blocks can become much larger as the number of parameters
increases.

The central question addressed in this work is whether the multipliers can instead be chosen to
depend only on the principal variables:
\begin{equation}\label{eq:restrictedintro}
 f(x,\la)=\sigma_0(x)+\sum_{i=1}^m\sigma_i(x)g_i(x,\la),
 \qquad \sigma_i \text{ are SOS}.
\end{equation}
% The corresponding restricted quadratic module is
% \[
%  \M_x(g)\coloneqq \Sigma[x]+\sum_{i=1}^m\Sigma[x]g_i
%  \subseteq
%  \V_r\coloneqq \R[x]\oplus\bigoplus_{k=1}^r\la_k\R[x].
% \]
% It is stable under multiplication by squares in $\R[x]$, but $\V_r$ is not
% an algebra. Thus the usual quadratic-module theorem in $\R[x,\la]$ does not
% directly imply \eqref{eq:restrictedintro}.
Our first main result is the following restricted-multiplier Positivstellensatz. 
\begin{theorem}[informal]
Let $f(x,\la),g_1(x,\la),\ldots,g_m(x,\la)$ be polynomials that are affine in parameter variables $\la$. Assume that a certain kind of Archimedeanity holds. If $f$ is strictly positive on $K$, then $f$ admits a representation of the form \eqref{eq:restrictedintro}.
\end{theorem}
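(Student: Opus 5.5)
Since the restricted cone $\M_x(g)\coloneqq\Sigma[x]+\sum_{i=1}^m\Sigma[x]g_i$ lives in $\V_r\coloneqq\R[x]\oplus\bigoplus_{k=1}^r\la_k\R[x]$, which is not an algebra, Putinar's theorem in $\R[x,\la]$ does not apply directly. I would therefore prove the statement by a Kadison--Dubois / Eidelheit--Kakutani separation argument inside $\V_r$, regarded as a module over $\R[x]$. I read the "certain kind of Archimedeanity" as a \emph{restricted Archimedean order-unit condition}: for every $h\in\V_r$ there is $N>0$ with $N\pm h\in\M_x(g)$. It suffices to ask this for $h=x_j$, $h=\la_k$, and $h=x^\alpha\la_k$. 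Since $\M_x(g)$ is stable under multiplication by elements of $\Sigma[x]$, the standard argument shows that the set $\{p\in\R[x]: N\pm p\in\M_x(g)\text{ for some }N\}$ is a subring. The parameter directions $\la_k\R[x]$ must then be handled by the hypothesis directly.

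\textbf{Step 1 (order unit).} Under the restricted Archimedean condition, $1$ is an algebraic interior point of the convex cone $\M_x(g)$ in the countable-dimensional space $\V_r$.

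\textbf{Step 2 (separation).} Suppose $f\in\V_r$ is strictly positive on $K$ but $f\notin\M_x(g)$. Eidelheit--Kakutani gives a linear functional $L\colon\V_r\to\R$ with $L(\M_x(g))\ge0$, $L(1)=1$ and $L(f)\le0$. Positivity of $L$ on $\Sigma[x]$ together with the Archimedean bounds $|L(x^\alpha)|\le C^{|\alpha|}$ makes $L|_{\R[x]}$ a determinate moment functional. By Putinar/Haviland on a compact set, $L|_{\R[x]}$ is integration against a probability measure $\mu$ on a compact set $X\subset\R^n$, where $X$ is the projection of $K$ extended by the bounds.

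\textbf{Step 3 (parameter part; the main obstacle).} Each functional $p\mapsto L(\la_k p)$ on $\R[x]$ is dominated: from $N\mu\pm\la_k\in\M_x(g)$, multiplied by squares, we get $|L(\la_k s)|\le N L(s)$ for every $s\in\Sigma[x]$. I expect this to yield a signed measure $\nu_k\ll\mu$, and hence a density $\ell_k(x)=d\nu_k/d\mu$ with $|\ell_k|\le N$. This extension from the cone of squares to a bounded density via Riesz/Radon--Nikodym is the delicate point: one needs density of squares (or of $\Sigma[x]$-combinations) in the positive continuous functions on the compact $X$, which follows from Stone--Weierstrass.

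\textbf{Step 4 (conclusion).} Now $L(h)=\int h(x,\ell(x))\,d\mu(x)$ for every $h\in\V_r$, because $h$ is affine in $\la$. Testing $s\,g_i$ against $L$ for all $s\in\Sigma[x]$ gives $g_i(x,\ell(x))\ge0$ for $\mu$-a.e.\ $x$, again by density of squares. Hence $(x,\ell(x))\in K$ for $\mu$-a.e.\ $x$, so
\[
L(f)=\int f(x,\ell(x))\,d\mu\ge\min_K f>0,
\]
contradicting $L(f)\le0$. Affinity in $\la$ is exactly what makes the Dirac-in-$\la$ disintegration lossless.
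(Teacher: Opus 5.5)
Your architecture is the paper's: order-unit separation, a measure $\mu$ for $L|_{\R[x]}$, a bounded density for each functional $\ell_k\colon p\mapsto L(\la_k p)$, almost-everywhere feasibility of the graph, and the contradiction. But Step~3, which you correctly flag as the crux, does not go through with the tool you propose. You know the domination $|\ell_k(s)|\le N L(s)$ only on the cone $\Sigma[x]$. Density of squares in $C_+(X)$ cannot, by itself, extend $\ell_k$ to $C(X)$ or to $L^2(\mu)$. If two sequences of squares $s_j,s_j'$ converge uniformly on $X$ to the same function, domination only gives $|\ell_k(s_j)-\ell_k(s_j')|\le N\bigl(L(s_j)+L(s_j')\bigr)$, which need not tend to zero. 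What is missing is continuity of $\ell_k$ on all of $\R[x]$ for a norm attached to $\mu$; in particular, you need that $\ell_k$ kills polynomials vanishing on $\operatorname{supp}\mu$. The needed estimate is
\[
 |L(\la_k p)|\le N\,\norm{p}_{L^2(\mu)}\qquad(p\in\R[x]).
\]
The paper obtains it by polarization and rescaling. Applying $L$ to $\frac14(q\pm1)^2(N+\la_k)+\frac14(q\mp1)^2(N-\la_k)=\frac N2(q^2+1)\pm q\la_k\in\M_x(g)$ gives $|L(q\la_k)|\le\frac N2\bigl(L(q^2)+1\bigr)$. Replacing $q$ by $tq$ and optimizing over $t>0$ then gives the bound. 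Alternatively, Cauchy--Schwarz for the positive functionals $p\mapsto L\bigl((N\pm\la_k)p\bigr)$ on $\R[x]$ gives the same bound with constant $2N$.

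Once this bound is in hand, your route closes and is slightly leaner than the paper's. Riesz in $L^2(\mu)$ gives $\tau_k\in L^2(\mu)$ with $L(\la_k p)=\int p\,\tau_k\,d\mu$. Only then does density of squares enter: it upgrades $|\int a^2\tau_k\,d\mu|\le N\int a^2\,d\mu$ to $|\tau_k|\le N$ a.e., and it proves feasibility in Step~4. The paper instead turns the bilinear form $L(\la_k ab)$ into a bounded self-adjoint operator $T_k$ on $L^2(\mu)$ commuting with the coordinate multiplications, and identifies it as multiplication by $T_k1$. Both routes rest on the same estimate. The same identity also shows that your extra order bounds for $x^\alpha\la_k$ are redundant, given those for $\la_k$ and Archimedeanity of $\M_x(g)\cap\R[x]$.

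Two smaller repairs are needed. In Step~2, Haviland on the projection of $K$ is not available: positivity of $L$ on polynomials nonnegative there is essentially a special case of what you are proving. Also, $|L(x^\alpha)|\le C^{|\alpha|}$ does not follow from separate order bounds for each $x^\alpha$. Use instead the ball certificate $R-\norm{x}^2\in\M_x(g)$, obtained by applying the order-unit property to $\norm{x}^2$. It gives $L\bigl(a^2(R-\norm{x}^2)\bigr)\ge0$, and Putinar for the ball then yields $\mu$ supported in $\{\norm{x}^2\le R\}$. In Step~4, deduce $K\neq\varnothing$ from a.e.\ feasibility and $\mu(\R^n)=1$ before writing $\min_K f$.
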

The proof is based on an exact representation of positive linear
functionals. Let $\M_x(g)$ be the set of polynomials of the form \eqref{eq:restrictedintro}. Every normalized functional $\mathcal L$ that is nonnegative on
$\M_x(g)$ can be written as
\begin{equation}\label{eq:graphintro}
 \mathcal L(p)=\int p(x,\tau(x))\,d\mu(x),\qquad p\in\V_r\coloneqq \R[x]\oplus\bigoplus_{k=1}^r\la_k\R[x],
\end{equation}
where $\mu$ is a compactly supported probability measure and $\tau$ is a
bounded Borel map satisfying $(x,\tau(x))\in K$ for $\mu$-almost every $x$.
The affine dependence on $\la$ is decisive: only the conditional barycenter
of the parameter distribution is visible, and each parameter fiber is
convex. Combining this graph representation with order-unit separation gives
the positivity theorem.

The same representation solves the associated full moment problem. In addition to the ordinary moments of $x$, only the mixed moments of $x^\alpha\la_k$ are required; moments containing products such as
$\la_j\la_k$ do not appear. We characterize all representing measures by their common principal-variable marginal and conditional parameter
barycenter. Building on these results, we construct a
convergent restricted moment-SOS hierarchy for polynomial optimization problems with affine parameters, and provide a directly checkable flatness criterion for detecting global optimality and recovering global minimizers.

We then extend the scalar polynomial theory to the setting of polynomial-matrix constraints with any number of affine parameters.
\begin{theorem}[informal]
Let $f(x,\la)$ be a scalar polynomial and $G_1(x,\la),\ldots,G_m(x,\la)$ be polynomial matrices that are affine in the parameter variables $\la$. Assume that a certain kind of Archimedeanity holds. If $f$ is strictly positive on the set defined by $G_i(x,\la)\succeq0$, $i=1,\ldots,m$, then $f$ admits a matrix restricted-multiplier representation.
\end{theorem}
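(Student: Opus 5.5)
We reduce the matrix case to the scalar strategy of the first theorem: describe every normalized positive functional on the restricted matrix module by a graph measure, then separate with an order unit. Let $\M_x(G)$ denote the set of elements
\[
 \sigma_0(x)+\sum_{i=1}^m \tr\bigl(S_i(x)G_i(x,\la)\bigr),
\]
where $\sigma_0\in\sos$ and each $S_i$ is an SOS polynomial matrix in $x$ alone, of size matching $G_i$. Since $G_i$ is affine in $\la$ and $S_i$ depends only on $x$, each such element lies in $\V_r$. The module $\M_x(G)$ is a convex cone in $\V_r$, stable under multiplication by squares in $\R[x]$. The restricted Archimedean hypothesis will say that $1$ is an order unit of $\M_x(G)$ in $\V_r$. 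Concretely, for every $p\in\V_r$ there is $N$ with $N\pm p\in\M_x(G)$; in particular this holds for $N-\norm{x}^2$ and $N\pm\la_k$.

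\textbf{Step 1 (separation).} Suppose $f>0$ on $K_G=\{(x,\la):G_i(x,\la)\succeq0\}$ but $f\notin\M_x(G)$. Because $1$ is an order unit, the cone $\M_x(G)$ has nonempty algebraic interior in $\V_r$. The Eidelheit--Kakutani separation theorem then gives a linear functional $\mathcal L$ on $\V_r$ with $\mathcal L\ge0$ on $\M_x(G)$, $\mathcal L(1)=1$ and $\mathcal L(f)\le0$.

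\textbf{Step 2 (graph representation).} We represent $\mathcal L$ as in \eqref{eq:graphintro}.
\begin{itemize}
\item Restricted to $\R[x]$, $\mathcal L$ is nonnegative on $\sos$. The order unit forces bounds $|\mathcal L(x^\alpha)|\le C^{|\alpha|}$, so the Archimedean/Haviland argument yields a compactly supported probability measure $\mu$ with $\mathcal L(p)=\int p\,d\mu$ for $p\in\R[x]$.
\item For each $k$, the functional $q\mapsto\mathcal L(\la_k q)$ on $\R[x]$ satisfies $|\mathcal L(\la_k q^2)|\le N\mathcal L(q^2)$, which follows from $N\pm\la_k\in\M_x(G)$ multiplied by $q^2$. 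A Cauchy--Schwarz/Riesz argument in $L^2(\mu)$, using density of polynomials on compact support, gives bounded Borel $\tau_k$ with $\mathcal L(\la_k q)=\int q\,\tau_k\,d\mu$.
\end{itemize}

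\textbf{Step 3 (support condition).} We must show $G_i(x,\tau(x))\succeq0$ for $\mu$-a.e.\ $x$. Write $G_i(x,\la)=G_{i0}(x)+\sum_k\la_k G_{ik}(x)$. For any vector polynomial $v(x)$ and any $q\in\R[x]$, the element $q^2\,v^TG_iv$ lies in $\M_x(G)$. This gives
\[
 \int q(x)^2\,v(x)^TG_i(x,\tau(x))v(x)\,d\mu\ge0 .
\]
The map $x\mapsto G_i(x,\tau(x))$ is a bounded measurable symmetric-matrix function. Density of polynomial $q^2$ among nonnegative continuous functions in $L^1(\mu)$-duality yields $v(x)^TG_i(x,\tau(x))v(x)\ge0$ $\mu$-a.e.\ for each fixed polynomial $v$. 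Taking $v$ to range over constant vectors with rational entries, a countable set, we get $G_i(x,\tau(x))\succeq0$ a.e. This is where affinity is essential: $\mathcal L$ only sees the barycenter $\tau$, and the fiber $\{\la:G_i(x,\la)\succeq0\}$ is a spectrahedron, hence convex, so the barycenter stays in it.

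\textbf{Step 4 (contradiction).} By Steps 2 and 3, $\mathcal L(f)=\int f(x,\tau(x))\,d\mu$ with $(x,\tau(x))\in K_G$ a.e. Compactness of $K_G$, which follows from the Archimedean hypothesis, together with continuity of $f$ gives $f\ge\delta>0$ on $K_G$. Hence $\mathcal L(f)\ge\delta>0$, contradicting Step 1.

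\textbf{Main obstacle.} We expect the hardest part to be Step 2's construction of $\tau$ together with Step 3's passage from tested inequalities to pointwise a.e.\ semidefiniteness, in particular justifying the density argument for bounded Borel weights. If the paper's scalar graph-representation theorem is stated abstractly, we would instead invoke it with the scalarized constraints $v^TG_iv$, which the matrix module contains.
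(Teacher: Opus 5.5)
Your proposal is correct and follows essentially the same route as the paper. The paper also argues by order-unit separation. It then obtains the graph representation $\mathcal L(p)=\int p(x,\tau(x))\,d\mu$ by reusing Lemma~\ref{thm:graph}, which applies because $\Q_x(G)$ is a convex cone containing $1$ and stable under multiplication by squares in $x$. Finally, it derives the support condition from the test elements $a^2u^{\intercal}G_iu=\ip{(au)(au)^{\intercal}}{G_i}$ with $u$ ranging over the countable set $\mathbb Q^{q_i}$.

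The differences are only cosmetic. You build $\mu$ and the bounded $\tau_k$ from moment bounds and a Riesz argument in $L^2(\mu)$, where the paper uses the GNS construction and an operator commuting with the coordinate multiplications. You use density of polynomial squares among nonnegative continuous functions, where the paper tests against the indicator of $\{h<0\}$. Both versions are valid.
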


Surprisingly, the case of matrix-valued objectives exhibits a sharp
distinction. With one affine parameter, pointwise positivity can still guarantee a restricted matrix certificate. 
\begin{theorem}[informal]
Let $F(x,\la)$, $G_1(x,\la),\ldots,G_m(x,\la)$ be polynomial matrices that are affine in the single parameter variable $\la$. Assume that a certain kind of Archimedeanity holds. If $F$ is strictly positive on the set defined by $G_i(x,\la)\succeq0$, $i=1,\ldots,m$, then $F$ admits a matrix restricted-multiplier representation.
\end{theorem}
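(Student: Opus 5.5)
We prove the informal single-parameter matrix statement by the same duality scheme as the scalar theorem: order-unit separation plus a representation of the positive functionals. Let $\V^{\ell}_1$ be the space of symmetric $\ell\times\ell$ matrices with entries in $\R[x]\oplus\la\R[x]$. Let $\M^\ell_x(G)\subseteq\V^\ell_1$ be the cone of sums of terms $S_0(x)^TS_0(x)$ and $S_i(x)^T G_i(x,\la) S_i(x)$, where the $S_i$ are polynomial matrices in $x$ only.

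\textbf{Step 1 (separation).} Under the restricted Archimedean order-unit hypothesis, the identity $I_\ell$ is an order unit for $\M^\ell_x(G)$ in $\V^\ell_1$. If $F\notin\M^\ell_x(G)$, an Eidelheit--Kakutani separation gives a linear functional $\mathcal L$ on $\V^\ell_1$ with $\mathcal L\ge0$ on $\M^\ell_x(G)$, $\mathcal L(I_\ell)=1$ and $\mathcal L(F)\le0$.

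\textbf{Step 2 (graph representation).} We represent $\mathcal L$ in the form
\[
\mathcal L(P)=\int \tr\bigl(P_0(x)\,d\Phi_0(x)\bigr)+\int\tr\bigl(P_1(x)\,d\Phi_1(x)\bigr),\qquad P=P_0+\la P_1 .
\]
Here $\Phi_0$ is a PSD-matrix-valued measure and $\Phi_1$ is a symmetric-matrix-valued measure, absolutely continuous with respect to $\mu=\tr\Phi_0$. The restriction of $\mathcal L$ to $\la$-free entries is a positive functional on the matrix quadratic module in $x$ alone. By the matrix Riesz--Haviland/Archimedean moment theorem it is given by $\Phi_0=W(x)\,d\mu$ with $W\succeq0$. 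Boundedness of $\la$-terms, from the order unit applied to $\pm\la E$, yields $\Phi_1=T(x)\,d\mu$ with $T$ bounded. Positivity on $S^TG_iS$ then translates into the pointwise condition
\[
\tr\bigl(G_{i0}(x)\,SWS^T\bigr)+\tr\bigl(G_{i1}(x)\,STS^T\bigr)\ge0
\]
for $\mu$-a.e.\ $x$ and all constant $S$, obtained by localizing with squares of $x$-polynomials and a density argument. The same localization applied to $F$ gives $\int \tr(F_0W+F_1T)\,d\mu\le0$.

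\textbf{Step 3 (the key reduction; main obstacle).} At $\mu$-a.e.\ $x$ we must produce a convex combination of points $\la_j$ in the fiber $K_x=\{\la:G_i(x,\la)\succeq0\}$ and PSD matrices $W_j$ with $W=\sum_j W_j$, $T=\sum_j\la_jW_j$. Then
\[
\tr(F_0W+F_1T)=\sum_j\tr\bigl(F(x,\la_j)W_j\bigr)>0,
\]
contradicting Step 1.

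This is where one parameter matters. Since $K_x$ is a compact interval $[a,b]$, the required decomposition is equivalent to $(T-aW)\succeq0$ and $(bW-T)\succeq0$ in a suitable sense. Concretely, we need $T=aW_a+bW_b$ with $W=W_a+W_b$ and $W_a,W_b\succeq0$. This amounts to the interval Positivstellensatz for linear matrix pencils in one variable, i.e.\ the matrix Fej\'er--Riesz/Lukács-type fact that a matrix pair positive against all affine polynomials nonnegative on an interval is a matrix moment pair.

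I would first derive from the dual condition that $\tr(H_0W+H_1T)\ge0$ for every matrix pencil $H_0+\la H_1$ that is PSD on $[a,b]$. The intended route is to write every such pencil as $A+(\la-a)B+(b-\la)C$ with $A,B,C\succeq0$, which holds for univariate linear matrix polynomials on an interval. Next, I would express $\la-a\ge0$ and $b-\la\ge0$, as matrix conditions, through the $G_i$ multipliers; these are recovered via the one-variable matrix Positivstellensatz for the compact fiber. Finally, I would invoke finite-dimensional duality for the two-dimensional matrix moment problem to obtain $W_a,W_b$.

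Measurable selection of $a(x),b(x),W_a,W_b$, via Kuratowski--Ryll-Nardzewski, completes the argument. For two or more parameters this step fails, since PSD linear pencils on a polytope need not split this way. That is consistent with the counterexamples announced in the paper.
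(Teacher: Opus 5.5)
Your architecture (separate, disintegrate $\mathcal L$ over $\mu$ into fiber data $(W(x),T(x))$, then solve a finite-dimensional matrix moment problem on each fiber) differs from the paper's, which never disintegrates. The paper builds the amplified GNS space on $\R[x]^p$ and obtains a single self-adjoint $T$ commuting with the $X_j$. For $r=1$ the tuple $(X_1,\ldots,X_n,T)$ is therefore commuting, and its joint spectral measure gives a PSD matrix measure on $K_G$ at once. Your route can be completed, but Step 3, the only place where $r=1$ enters, is not justified as written. The interval step itself is trivial for affine pencils: test against $(\la-a)vv^{\intercal}$ and $(b-\la)vv^{\intercal}$, so no Fej\'er--Riesz or Luk\'acs theorem is involved. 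The content lies in passing from the cone $\mathcal C_x$ to these pencils. Here $\mathcal C_x$ is generated by constant PSD matrices and constant congruences $S^{\intercal}G_i(x,\la)S$, and it is the only cone on which you know $\ell_x(P_0+\la P_1)\coloneqq\tr(P_0W)+\tr(P_1T)$ is nonnegative. A one-variable matrix Positivstellensatz does not do this. Its multipliers are polynomials in $\la$, so its terms lie outside $\mathcal C_x$ and say nothing about the sign of $\ell_x$, which is defined only on pencils affine in $\la$. You need a degree-zero (Farkas) certificate instead, and that can fail exactly at an endpoint when the fiber LMI has no strictly feasible point. For $G(\la)=\begin{pmatrix}0&\la\\ \la&1\end{pmatrix}$ the feasible set is $\{0\}$, yet $\la\notin\{c+\tr(SG(\la)):c\geq0,\ S\succeq0\}$, although $\la+\delta$ lies in this cone for every $\delta>0$. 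So only membership in the closure is available. That suffices for a dual inequality, but it must come from conic duality on the fiber, which needs $K_x$ nonempty and bounded. You assume this (``$K_x$ is a compact interval'') instead of deriving it. Boundedness follows by evaluating the certificate of $CI_p\pm\la I_p\in\mathfrak M_x^p(G)$ at $x$. Nonemptiness holds only where $W(x)\neq0$ and must be deduced from the fiber condition.

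A direct repair removes the endpoints, the Farkas step and the measurable selection; the last is unnecessary anyway, since you only need a pointwise inequality before integrating. Your pointwise condition over all constant $S$ says exactly that $G_{i0}(x)\otimes W(x)+G_{i1}(x)\otimes T(x)\succeq0$ on $\R^{q_i}\otimes\R^p$. Localizing $a^2(C\pm\la)vv^{\intercal}\in\mathfrak M_x^p(G)$ gives $-CW\preceq T\preceq CW$ almost everywhere, hence $\ker W\subseteq\ker T$. So $A\coloneqq(W^{\dagger})^{1/2}T(W^{\dagger})^{1/2}$ satisfies $T=W^{1/2}AW^{1/2}$. Conjugating by $I\otimes(W^{\dagger})^{1/2}$ gives $G_{i0}\otimes I+G_{i1}\otimes A\succeq0$ on $\R^{q_i}\otimes\operatorname{range}W$. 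A single symmetric matrix diagonalizes orthogonally, $A=\sum_j\la_jv_jv_j^{\intercal}$, and testing on $h\otimes v_j$ gives $G_i(x,\la_j)\succeq0$, i.e.\ $\la_j\in K_x$. With $W_j\coloneqq W^{1/2}v_jv_j^{\intercal}W^{1/2}$ you get $W=\sum_jW_j$ and $T=\sum_j\la_jW_j$. Using $F\succeq\eps I_p$ on the compact set $K_G$, this gives $\tr(F_0W+F_1T)\geq\eps\tr W$ almost everywhere, and $\mathcal L(F)\geq\eps$ after integration. This is the fiberwise shadow of the paper's joint-spectral-measure step. It is also exactly what breaks for $r\geq2$, where the compressions $A_k$ need not commute (cf.\ Example~\ref{ex:matrixfailure}). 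Finally, the claim $\Phi_1=T\,d\mu$ in Step 2 is not a formality. It says that an operator on $L^2(W\,d\mu;\R^p)$ commuting with multiplication by every $x_j$ is a matrix multiplier. That is the same functional-analytic content as Lemma~\ref{lem:matrix-gns}, and it should be proved rather than asserted.
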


Nevertheless, with two or more parameters, the direct generalization is no longer true. We therefore identify uniform free positivity as a general
sufficient replacement for ordinary pointwise positivity to obtain a restricted matrix certificate.

To illustrate the usefulness of the theoretical results, we apply the restricted-multiplier Positivstellens\"atze to robust linear programming, robust Lyapunov and dissipation inequalities, min-max polynomial optimization, robust polynomial matrix inequalities, and analysis and control of input-affine dynamical systems. Extensive numerical experiments demonstrate the notable computational advantage of our approach. In particular, comparisons with the standard joint-variable approach show substantial reductions in PSD block sizes and solution times, with little or no loss in bound quality across most test cases.

% \vspace{1em}
% {\bf Related work.}
% Lasserre \cite{lasserre2011min} developed a moment-SOS
% method for min-max and robust polynomial optimization using polynomial lower approximation. 
% Klep and Nie \cite{KlepNie2020} proved a matrix Positivstellensatz with lifting polynomials. 
% Guo and Wang proposed a moment-SOS hierarchy for robust optimization with polynomial matrix inequalities in \cite{GuoWang2025} and established Positivstellens\"atze for polynomial matrices with universal quantifiers in \cite{GuoWangUQ2025}.
% Miller and Sznaier \cite{MillerSznaier2025} established that affine uncertainty in polynomial control problems can be eliminated through parameterized robust counterparts, with continuous and, under additional structural assumptions, polynomial conic multiplier selections.

The remainder of the paper is organized as follows. Section~\ref{sec:prelim}
introduces the ordered-module framework. Section~\ref{sec:scalar} proves the
scalar Positivstellensatz. The full and truncated moment theories are
developed in Sections~\ref{sec:moment} and~\ref{sec:hierarchy}, respectively.
Section~\ref{sec:extensions} treats polynomial matrices, and
Section~\ref{sec:applications} develops optimization and control
applications. Section~\ref{sec:experiments} reports numerical experiments,
and Section~\ref{sec:conclusion} concludes with open questions and further
directions.

%% file: sections/02_preliminaries.tex
\section{Notation and preliminaries}\label{sec:prelim}
We begin with the notation and ordered-module structure used throughout.
The main preliminary result characterizes the order-unit hypothesis by
finitely many algebraic bounds.

Let $\N$ denote the nonnegative integers and let $[r]\coloneqq \{1,\ldots,r\}$. We write $A\succeq0$ to indicate that the matrix $A$ is PSD. For $\alpha\in\N^n$, write $|\alpha|\coloneqq \sum_j\alpha_j$ and $x^\alpha\coloneqq \prod_jx_j^{\alpha_j}$. We use $\R[x]_{\leq d}$ for the polynomials of degree at most $d$ and $[x]_d$ for the column of all monomials of degree at most $d$. Its length is $N_n(d)\coloneqq \binom{n+d}{d}$. The cone of SOS polynomials is
\[
 \sos\coloneqq \left\{\sum_{j=1}^t p_j^2:t\in\N,\ p_j\in\R[x]\right\}.
\]
We write $\Sigma[x]_{\leq2d}\coloneqq \Sigma[x]\cap\R[x]_{\leq2d}$ for its truncation at degree $2d$. A polynomial belongs to this cone if and only if it equals $[x]_d^\intercal Q[x]_d$ for some $Q\succeq0$. We denote the space of real symmetric $q\times q$ matrices by $\Sym^q$ and its polynomial counterpart by $\Sym^q[x]$. For symmetric matrices of the same size, $\ip{A}{B}\coloneqq \tr(AB)$.

\subsection{Full and restricted quadratic modules}
Set $\A\coloneqq \R[x]$ and $\V_r\coloneqq \A\otimes_\R\Aff(\R^r)$. Every element of $\V_r$ has the unique form $p=p_0+\sum_k\la_kp_k$ with $p_k\in\A$. For $g=(g_1,\ldots,g_m)\in\V_r^m$, the full quadratic module generated by $g$ in $\R[x,\la]$ is
\begin{equation}\label{eq:full-module}
 \M(g)\coloneqq \left\{\sigma_0+\sum_{i=1}^m\sigma_i g_i:
                  \sigma_0,\ldots,\sigma_m\in\Sigma[x,\la]\right\},
\end{equation}
where $\Sigma[x,\la]$ is the cone of finite sums of squares of polynomials in all variables $(x,\la)$. Thus $\M(g)$ is the smallest subset of $\R[x,\la]$ containing $1,g_1,\ldots,g_m$ that is closed under addition and multiplication by squares of arbitrary polynomials in $\R[x,\la]$. Every element is nonnegative on $K$, although it need not be affine in $\la$.

Restricting the SOS multipliers to polynomials in $x$ alone gives
\begin{equation}\label{eq:module}
 \M_x(g)\coloneqq \left\{\sigma_0+\sum_{i=1}^m\sigma_i g_i:
                  \sigma_0,\ldots,\sigma_m\in\sos\right\}.
\end{equation}
It is a convex cone containing $1$ and satisfying $a^2\M_x(g)\subseteq\M_x(g)$ for every $a\in\A$. We call it a restricted quadratic module; in particular, $\M_x(g)\subseteq\M(g)\cap\V_r$. The term refers to closure under addition and multiplication by squares in $\A$; $\V_r$ is not an algebra when $r\geq1$.

\begin{definition}\label{def:order-unit}
Let $C$ be a convex cone in a real vector space $V$. An element $e\in C$ is an order unit for $C$ in $V$ if
\begin{equation}\label{eq:order-unit}
 \text{for every }v\in V\text{ there is }N>0
 \text{ such that }Ne+v,\ Ne-v\in C.
\end{equation}
Equivalently, writing $u\leq_C v$ when $v-u\in C$, every $v\in V$ satisfies $-Ne\leq_C v\leq_C Ne$ for some $N>0$. The cone need not be pointed, so $\leq_C$ is in general a preorder rather than a partial order.
\end{definition}

\begin{definition}\label{def:arch}
The cone $\M_x(g)$ is restricted Archimedean if $1$ is an order unit for $\M_x(g)$ in $\V_r$, that is,
\begin{equation}\label{eq:arch}
 \text{for every }p\in\V_r\text{ there is }N>0
 \text{ such that }N+p,\ N-p\in\M_x(g).
\end{equation}
\end{definition}
The order-unit property \eqref{eq:arch} is distinct from the closure property
that $p+\eps\in\M_x(g)$ for all $\eps>0$ implies $p\in\M_x(g)$.
This distinction is relevant when $p$ is nonnegative but not strictly positive.

Every element of $\M_x(g)$ is nonnegative on $K$. Therefore \eqref{eq:arch} implies that $K$ is bounded, by applying it to all coordinate functions. Since $K$ is closed, it is compact. The set is allowed to be empty in the positivity theorems. Optimization results will explicitly assume $K\neq\varnothing$.

\subsection{Algebraic bounds}
The order-unit property can be checked through the following certificates:
\begin{equation}\label{eq:bounds}
 R-\norm{x}^2\in\M_x(g),\qquad
 C_k+\la_k,\ C_k-\la_k\in\M_x(g)\quad(k\in[r]),
\end{equation}
for some $R,C_1,\ldots,C_r>0$. Each membership requires a polynomial certificate of the bound. The following
lemma also applies to the enlarged cones introduced in later sections.

\begin{lemma}\label{lem:bounds}
Let $C\subseteq\V_r$ be a convex cone containing $1$ and stable under multiplication by squares from $\A$. Then $1$ is an order unit for $C$ if and only if \eqref{eq:bounds} holds with $C$ in place of $\M_x(g)$.
\end{lemma}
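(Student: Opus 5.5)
The forward direction is immediate. If $1$ is an order unit for $C$, apply the definition to $p=\norm{x}^2$ to get $N-\norm{x}^2\in C$. Apply it to $p=\la_k$ to get $N\pm\la_k\in C$. So the bounds \eqref{eq:bounds} hold. The substance is the converse, which I would handle by the classical Archimedean argument for quadratic modules, adapted to the fact that $C$ is only stable under multiplication by squares of $x$-polynomials.

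Assume \eqref{eq:bounds}, and let $H\coloneqq\{p\in\V_r: \exists N>0,\ N\pm p\in C\}$. I would proceed in four steps.

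\textbf{Step 1.} $H$ is a linear subspace of $\V_r$. It is closed under scaling by positive reals, under negation by the symmetry of the definition, and under addition because $C$ is a convex cone. It contains all constants, since $1\in C$.

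\textbf{Step 2.} $H\cap\A$ is a subring of $\A$. The key identity is
\[
 N^2-ab=\tfrac12\bigl((N-a)^2(N+b)+(N+a)^2(N-b)\bigr)\Big/N,
\]
or, more simply, the standard route of first showing that $a\in H\cap\A$ implies $a^2\in H$. For that, use $N^2-a^2$ written through $(N\pm a)$ as follows. Write
\[
 N-a^2/(\cdot)\dots;
\]
cleaner still is the classical trick
\[
 N'-a^2=\bigl(N'-N^2\bigr)+\tfrac1{2N}\bigl((N+a)^2(N-a)+(N-a)^2(N+a)\bigr),
\]
valid since the right bracket equals $N^3-Na^2$ divided appropriately. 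This uses only squares of elements of $\A$ times elements of $C$, which is permitted. Then $ab=\tfrac14((a+b)^2-(a-b)^2)$ together with $a^2+N'\in C$ trivially gives closure under products.

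\textbf{Step 3.} Each $x_j\in H$. This follows from $R-\norm{x}^2\in C$, since
\[
 \tfrac{R+1}{2}\pm x_j=\tfrac12(x_j\pm1)^2+\tfrac12\sum_{i\ne j}x_i^2+\tfrac12(R-\norm{x}^2).
\]
Consequently $H\cap\A=\A$.

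\textbf{Step 4.} The step I expect to be the genuine obstacle is the mixed terms $\la_kp$ with $p\in\A$, because $\la_k$ cannot be squared inside $\V_r$. My plan is to use the identity
\[
 \la_k p = \tfrac14\bigl((p+1)^2-(p-1)^2\bigr)\la_k .
\]
Then bound each piece $q^2\la_k$ with $q\in\A$ by $q^2(C_k\pm\la_k)\in C$, which gives
\[
 C_kq^2\pm q^2\la_k\in C.
\]
Combining with $M-C_kq^2\in C$, which holds because $q^2\in\A=H$ by Step 3, yields $M\pm q^2\la_k\in C$. Hence $q^2\la_k\in H$, so $\la_kp\in H$.

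Since $H$ is a subspace, it then contains every $p_0+\sum_k\la_kp_k$. Therefore $H=\V_r$, and $1$ is an order unit for $C$.
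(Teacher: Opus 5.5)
Your proof is correct and takes essentially the paper's route. You show that $C\cap\A$ is Archimedean by a bounded-elements ring argument: your set is $H=\{p:N\pm p\in C\}$ with the classical identity $N^2-a^2=\frac{1}{2N}\bigl((N+a)^2(N-a)+(N-a)^2(N+a)\bigr)$, where the paper uses $\{p\in\A:N-p^2\in C\cap\A\}$. You then handle $\la_kq$ by multiplying $C_k\pm\la_k$ by squares and polarizing via $4q=(q+1)^2-(q-1)^2$.

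Do delete the first displayed ``key identity'' in Step~2 and the garbled line after it. That identity is false for $a\neq b$, since its right-hand side equals $N^2+a^2-2ab$; the classical identity you then use is the one that actually carries the argument.
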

\begin{proof}
The forward implication follows by applying the order-unit property to $\norm{x}^2$ and $\la_k$. Conversely, $C_0\coloneqq C\cap\A$ is a quadratic module containing $R-\norm{x}^2$. For completeness, let
\[
 B\coloneqq \{p\in\A:\text{there is }N>0\text{ with }N-p^2\in C_0\}.
\]
The set $B$ is a subring of $\A$. Indeed, if $N_p-p^2,N_q-q^2\in C_0$, then
\begin{align*}
 2(N_p+N_q)-(p+q)^2
 &=2(N_p-p^2)+2(N_q-q^2)+(p-q)^2,\\
 N_pN_q-p^2q^2&=N_q(N_p-p^2)+p^2(N_q-q^2)
\end{align*}
belong to $C_0$. Constants and all coordinates belong to $B$, so $B=\A$. Moreover,
\[
 \frac{N+1}{2}\pm p=\frac12(N-p^2)+\frac12(p\pm1)^2\in C_0.
\]
Thus $C_0$ is Archimedean.

Fix $q\in\A$ and $k\in[r]$. The identity
\begin{equation}\label{eq:polar-bound}
 \frac14(q+1)^2(C_k+\la_k)+\frac14(q-1)^2(C_k-\la_k)
 =\frac{C_k}{2}(q^2+1)+q\la_k
\end{equation}
shows that its right-hand side belongs to $C$. Interchanging the signs of $\la_k$ gives the corresponding bound for $-q\la_k$. Since $C_0$ is Archimedean, there is $D$ with $D-\frac{C_k}{2}(q^2+1)\in C_0$. Hence $D\pm q\la_k\in C$. Adding these bounds and an order bound for $p_0$ proves \eqref{eq:arch} for $p_0+\sum_k\la_kp_k$.
\end{proof}

%% file: sections/03_scalar.tex
\section{Restricted-multiplier Positivstellens\"atze}\label{sec:scalar}
We now use the order-unit framework to prove the scalar positivity theorem.
The proof first represents positive functionals for one affine parameter,
then extends the representation componentwise to several parameters.

\subsection{One affine parameter}
We first consider $\V_1=\A\oplus\la\A$, which contains the analytic
argument needed for the general case.

\begin{theorem}\label{thm:scalar}
Let $f,g_1,\ldots,g_m\in\V_1$. Suppose that $\M_x(g)$ is restricted Archimedean. If $f>0$ on $K$, then
\[
 f=\sigma_0+\sum_{i=1}^m\sigma_i g_i
 \quad\text{for some }\sigma_0,\ldots,\sigma_m\in\sos.
\]
\end{theorem}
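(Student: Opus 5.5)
We argue by contradiction, combining order-unit separation with a graph representation of positive functionals, as the introduction suggests. Suppose $f\notin\M_x(g)$. Since $1$ is an order unit for the convex cone $\M_x(g)$ in $\V_1$, the standard Eidelheit--Kakutani separation theorem for cones with an algebraic interior point gives a linear functional $\mathcal L:\V_1\to\R$ with $\mathcal L\ge0$ on $\M_x(g)$, $\mathcal L(1)=1$, and $\mathcal L(f)\le0$. (The order unit lies in the algebraic interior, and the order-unit property forces $\mathcal L(1)>0$ for any nonzero positive functional, so we may normalize.) The goal is to show that such an $\mathcal L$ is represented as $\mathcal L(p)=\int p(x,\tau(x))\,d\mu(x)$ with $(x,\tau(x))\in K$ for $\mu$-a.e.\ $x$. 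This gives $\mathcal L(f)=\int f(x,\tau(x))\,d\mu>0$, because $f$ is continuous and strictly positive on the compact set $K$ (hence $f\ge\delta>0$ there). This contradicts $\mathcal L(f)\le0$.

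\textbf{Step 1 (marginal).} Restrict $\mathcal L$ to $\A$. By Lemma~\ref{lem:bounds}, $C_0=\M_x(g)\cap\A$ is Archimedean and contains $\Sigma[x]$ and $R-\norm{x}^2$. Hence $\mathcal L|_{\A}$ is nonnegative on squares and on $R-\norm{x}^2$. The Archimedean (Putinar/Haviland) moment theorem on the ball then yields a unique probability measure $\mu$ on $\{\norm{x}^2\le R\}$ with $\mathcal L(p_0)=\int p_0\,d\mu$.

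\textbf{Step 2 (barycenter density).} Consider the linear functional $\ell(q)\coloneqq\mathcal L(q\la)$ on $\A$. From $C_1\pm\la\in\M_x(g)$ and $q^2(C_1\pm\la)\in\M_x(g)$ we get $|\mathcal L(q^2\la)|\le C_1\int q^2\,d\mu$. Identity \eqref{eq:polar-bound} then gives $|\ell(q)|\le\frac{C_1}{2}\int(q^2+1)\,d\mu$. After scaling $q\mapsto tq$ and optimizing over $t$, this becomes $|\ell(q)|\le C_1\|q\|_{L^1(\mu)}$. A cleaner route is to apply the bound to $q=s^2 h$ with polynomial approximations of indicators. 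By density of polynomials in $L^1(\mu)$ (compact support), $\ell$ extends to a bounded functional on $L^1(\mu)$, so Riesz gives $\tau\in L^\infty(\mu)$ with $\|\tau\|_\infty\le C_1$ and $\mathcal L(q\la)=\int q\tau\,d\mu$.

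\textbf{Step 3 (fiber membership; main obstacle).} We must show $(x,\tau(x))\in K$ for $\mu$-a.e.\ $x$. Each constraint is $g_i=g_{i0}+\la g_{i1}$, so $\mathcal L(s^2 g_i)=\int s^2\,g_i(x,\tau(x))\,d\mu\ge0$ for every $s\in\A$. Since polynomials are dense in $L^2(\mu)$ and $g_i(\cdot,\tau(\cdot))$ is bounded, squares $s^2$ approximate arbitrary nonnegative bounded functions, in particular indicators of Borel sets, in $L^1(\mu)$. This yields $g_i(x,\tau(x))\ge0$ $\mu$-a.e. for each $i$, and hence $(x,\tau(x))\in K$ a.e. The delicate point is making this approximation rigorous: one approximates $\sqrt{\mathbf 1_E}$ in $L^2(\mu)$ by polynomials $s_j$ with $s_j^2\to\mathbf 1_E$ in $L^1(\mu)$, and uses boundedness of $g_i(\cdot,\tau)$. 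Affinity in $\la$ is exactly what makes $\mathcal L(s^2g_i)$ depend only on $\tau$, with no higher moments of $\la$. If $\mu$ assigns mass to points where the $x$-fiber of $K$ is empty, this argument still gives a contradiction; this is consistent with the case $K=\varnothing$, where $\mu$ cannot exist and so $f\in\M_x(g)$ automatically.

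Finally, $\mathcal L(f)=\int f(x,\tau(x))\,d\mu\ge\min_K f>0$, which is the contradiction, so $f\in\M_x(g)$.
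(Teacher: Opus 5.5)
Your overall architecture matches the paper's: order-unit separation, then a graph representation $\mathcal L(p)=\int p(x,\tau(x))\,d\mu$, then localization. Using Putinar's Archimedean moment theorem in Step~1, in place of the paper's GNS/spectral-theorem construction of $\mu$, is a legitimate alternative.

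\textbf{The gap is in Step~2.} Minimizing $\frac{C_1}{2}\bigl(t\int q^2\,d\mu+t^{-1}\bigr)$ over $t>0$ gives $|\ell(q)|\le C_1\|q\|_{L^2(\mu)}$, not $C_1\|q\|_{L^1(\mu)}$. Rescaling a bound that is quadratic in $q$ can only produce the $L^2$ norm. Riesz therefore gives only $\tau\in L^2(\mu)$ with $\|\tau\|_{L^2(\mu)}\le C_1$. The claimed $\|\tau\|_\infty\le C_1$ is unproved, and Step~3 depends on it: there you use that $g_i(\cdot,\tau(\cdot))$ is bounded, so that $s_j^2\to\mathbf 1_E$ in $L^1(\mu)$ suffices to pass to the limit. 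The ``cleaner route'' with $q=s^2h$ is not an argument as written.

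\textbf{How to close it.} The needed information is in an inequality you derived but never used:
\[
\Bigl|\int s^2\tau\,d\mu\Bigr|\le C_1\int s^2\,d\mu .
\]
The difficulty is extending it from polynomials $s$ to indicators when $\tau$ is only known to lie in $L^2$.
\begin{itemize}
\item \emph{Repair via $L^4$ density.} Since $\mu$ has compact support, polynomials are dense in $L^4(\mu)$. Taking polynomials $s_j\to\mathbf 1_E$ in $L^4(\mu)$ gives $s_j^2\to\mathbf 1_E$ in $L^2(\mu)$ (Cauchy--Schwarz). Hence $|\int_E\tau\,d\mu|\le C_1\mu(E)$ for every Borel $E$, i.e.\ $|\tau|\le C_1$ almost everywhere. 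The same trick would also let you run Step~3 directly with $\tau\in L^2(\mu)$; boundedness of $\tau$ then follows a posteriori from compactness of $K$.
\item \emph{The paper's route.} It works with the bilinear form $(a,b)\mapsto\mathcal L(\la ab)$ rather than the linear functional $\ell$. Polarization gives $|\mathcal L(\la ab)|\le C\norm{a}_{L^2(\mu)}\norm{b}_{L^2(\mu)}$, so the representing operator $T$ on $L^2(\mu)$ has $\norm{T}\le C$. Because $\mathcal L(\la ab)$ depends only on the product $ab$, $T$ commutes with the coordinate multiplications, hence with all bounded Borel multipliers. Therefore $T=M_\tau$ with $|\tau|\le C$ (Lemma~\ref{lem:parameter}).
\end{itemize}
With either repair, your Step~3 coincides with Lemma~\ref{lem:localization} and the contradiction argument goes through.
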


The following order-unit separation lemma follows from
\cite[Corollary~III.1.7]{Barvinok2002} by normalization.
\begin{lemma}\label{lem:sep}
Let $C$ be a convex cone in a real vector space $V$, and let $e\in C$ be an order unit. If $v\notin C$, then there is a linear functional $\mathcal L:V\to\R$ satisfying
\[
 \mathcal L(C)\subseteq[0,\infty),\qquad \mathcal L(e)=1,\qquad \mathcal L(v)\leq0.
\]
\end{lemma}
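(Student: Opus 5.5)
The plan is to reduce the statement to an algebraic separation theorem for a convex set with nonempty core, using the order-unit property to supply the interior point, and then to normalize.

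\textbf{Step 1: $e$ is a core point of $C$.} Fix $w\in V$. By \eqref{eq:order-unit} there is $N>0$ with $Ne\pm w\in C$. Dividing by $N$, which is allowed since $C$ is a cone, gives $e\pm\frac1N w\in C$. Convexity then gives $e+tw\in C$ for all $|t|\leq 1/N$. Hence $e$ lies in the algebraic interior (core) of $C$, and in particular the core of $C$ is nonempty.

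\textbf{Step 2: separation.} Since $V$ carries no topology, I will use the algebraic form of Hahn--Banach (Eidelheit/Kakutani--Tukey): a convex set with nonempty core can be separated from any point outside it by a nonzero linear functional. Equivalently, one can equip $V$ with the finest locally convex topology, in which every core point of a convex set is an interior point, and apply the usual geometric Hahn--Banach theorem; this is the content of \cite[Corollary~III.1.7]{Barvinok2002}. Since $v\notin C$, this yields a nonzero linear $\mathcal L$ and a constant $c$ with
\[
 \mathcal L(v)\leq c\leq \mathcal L(y)\qquad\text{for all } y\in C.
\]
This step, checking that the separation theorem applies without any topological hypotheses, is the only real obstacle. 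The core point from Step 1 is exactly what makes it valid.

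\textbf{Step 3: exploit the cone structure.} For $y\in C$ and $t>0$ we have $ty\in C$, so $t\mathcal L(y)\geq c$ for all $t>0$.
\begin{itemize}
\item Letting $t\to\infty$ forces $\mathcal L(y)\geq 0$, so $\mathcal L(C)\subseteq[0,\infty)$.
\item Letting $t\to0^+$ gives $c\leq 0$, hence $\mathcal L(v)\leq 0$.
\end{itemize}

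\textbf{Step 4: normalization.} Suppose $\mathcal L(e)=0$. For any $w$, Step 1 gives $e\pm tw\in C$ for small $t>0$. Step 3 then gives $\pm t\mathcal L(w)\geq0$, so $\mathcal L(w)=0$. This contradicts $\mathcal L\neq0$. Therefore $\mathcal L(e)>0$. Replacing $\mathcal L$ by $\mathcal L/\mathcal L(e)$ keeps $\mathcal L(C)\subseteq[0,\infty)$ and $\mathcal L(v)\leq0$, and achieves $\mathcal L(e)=1$.
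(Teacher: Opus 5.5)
Your proof is correct and follows the paper's argument: you show $e$ is a core point, apply algebraic Hahn--Banach separation, use positive scaling to get $\mathcal L(C)\subseteq[0,\infty)$ and $\mathcal L(v)\leq0$, and rule out $\mathcal L(e)=0$ via the order bounds before normalizing. The only difference is cosmetic: the paper separates $v$ from $\operatorname{core}C$ and then passes to all of $C$ by letting $\eps\downarrow0$ in $c+\eps e$, whereas you invoke a form of the separation theorem that already gives the inequality on all of $C$.
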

\begin{proof}
Let $e$ be an order unit for a convex cone $C\subseteq V$. For every $w\in V$, an order bound $Ne\pm w\in C$ implies $e+tw\in C$ when $|t|<1/N$. Thus $e$ belongs to the algebraic interior, or core, of $C$. Moreover, $c+\eps e$ belongs to the core for every $c\in C$ and $\eps>0$.

If $v\notin C$, then $v$ is outside the core. The algebraic separation theorem (equivalently, Hahn--Banach separation using the gauge of an absorbing convex neighborhood in the affine span) provides a nonzero linear functional $\ell$ such that
\[
 \ell(v)\leq\ell(u)\qquad\text{for every }u\in\operatorname{core}C.
\]
Positive scaling preserves the core. Taking arbitrarily large and arbitrarily small positive multiples of a core element shows, respectively, that $\ell$ is nonnegative on the core and $\ell(v)\leq0$. Since $c+\eps e$ is a core element, letting $\eps\downarrow0$ gives $\ell(c)\geq0$ for all $c\in C$. Finally, if $\ell(e)=0$, the inequalities $Ne\pm w\in C$ imply $\ell(w)=0$ for all $w$, contradicting nonzero separation. Hence $\ell(e)>0$, and $\mathcal L\coloneqq \ell/\ell(e)$ has the desired properties.
\end{proof}

\begin{lemma}\label{lem:measure}
Let $\mathcal L:\V_1\to\R$ satisfy $\mathcal L(1)=1$ and $\mathcal L(\M_x(g))\subseteq[0,\infty)$. If $R-\norm{x}^2\in\M_x(g)$, there is a probability measure $\mu$, supported on $\{x:\norm{x}^2\leq R\}$, such that
\begin{equation}\label{eq:L0}
 \mathcal L_0(a)\coloneqq \mathcal L(a)=\int a\,d\mu\qquad(a\in\A).
\end{equation}
\end{lemma}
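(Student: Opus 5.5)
The plan is to reduce the claim to Putinar's theorem for the Archimedean quadratic module $\Sigma[x]+\Sigma[x](R-\norm{x}^2)$ in $\A$. Equivalently, we use the solution of the moment problem on the ball. The functional $\mathcal L_0$ is simply the restriction of $\mathcal L$ to $\A\subseteq\V_1$.

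First, observe that $\sos\subseteq\M_x(g)$ and, by hypothesis, $R-\norm{x}^2\in\M_x(g)$. Since $\M_x(g)$ is stable under multiplication by elements of $\sos$ and by addition, every element of
\[
 Q\coloneqq \sos+\sos\,(R-\norm{x}^2)\subseteq\A
\]
also lies in $\M_x(g)$. Hence $\mathcal L_0\geq0$ on $Q$ and $\mathcal L_0(1)=1$.

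Next, we check that $Q$ is Archimedean. This is the computation already carried out in the proof of Lemma~\ref{lem:bounds}: the set of $p$ with $N-p^2\in Q$ is a subring containing the constants and the coordinates, since $R-x_j^2=(R-\norm{x}^2)+\sum_{l\neq j}x_l^2\in Q$. It is therefore all of $\A$, and so every $p\in\A$ satisfies $N\pm p\in Q$ for some $N$.

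We then invoke Putinar's solution of the moment problem (equivalently, the Archimedean representation theorem \cite{Putinar1993,Marshall2008}). A linear functional on $\A$ that is nonnegative on an Archimedean quadratic module $Q$ is integration against a positive Borel measure supported on the nonnegativity set of $Q$. Here that set is $\{x:\norm{x}^2\leq R\}$. The normalization $\mathcal L_0(1)=1$ makes $\mu$ a probability measure, which gives \eqref{eq:L0}.

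If a more self-contained route is preferred, the argument can be made directly. Positivity on $Q$ and Archimedeanity give $|\mathcal L_0(p)|\leq \sup_{\norm{x}^2\leq R}|p|$ for polynomials $p$. To obtain this, use Putinar's Positivstellensatz: $\sup|p|+\eps\mp p\in Q$ for every $\eps>0$. The functional $\mathcal L_0$ therefore extends continuously to $C(\{\norm{x}^2\leq R\})$ by Stone--Weierstrass density. It is positive there, since any nonnegative continuous function is a uniform limit of polynomials $p_k+\eps_k$ that are strictly positive on the ball and hence lie in $Q$. The Riesz representation theorem then yields $\mu$.

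The main obstacle is this positivity step on $Q$, i.e.\ the reliance on Putinar's theorem for the ball. Everything else is bookkeeping: the inclusion $Q\subseteq\M_x(g)$ and the Archimedean argument already in Lemma~\ref{lem:bounds}.
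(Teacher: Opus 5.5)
Your proof is correct, but it takes a different route from the paper.

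\textbf{What the paper does.} It does not invoke Putinar's theorem at this step. It performs the GNS construction on $\A/\mathcal N$ with $\ip{[a]}{[b]}=\mathcal L_0(ab)$. From positivity on $a^2(R-\norm{x}^2)$ it extracts the single estimate $\sum_j\mathcal L_0(x_j^2a^2)\leq R\,\mathcal L_0(a^2)$. This makes multiplication by $x_j$ a bounded self-adjoint operator $X_j$. The joint spectral theorem for the commuting tuple $(X_1,\ldots,X_n)$, applied at the cyclic vector $[1]$, then yields $\mu$, and the support bound comes from $\sum_jX_j^2\preceq RI$. In effect, this is the GNS proof of the moment form of Putinar's theorem specialized to the ball. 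There, boundedness of the $X_j$ is read off directly from the generator $R-\norm{x}^2$.

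\textbf{What you do instead.} You observe that only $\mathcal L|_{\A}$ matters. That restriction is nonnegative on the ordinary quadratic module $\sos+\sos\,(R-\norm{x}^2)\subseteq\M_x(g)$, so the lemma is exactly the classical Archimedean moment theorem on the ball. Your inclusion and Archimedeanity checks are correct. There is no circularity, since Putinar's theorem is a statement about $\R[x]$, not about $\V_r$.

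\textbf{What each route buys.} Your argument is shorter and makes transparent that this lemma is classical. The paper's argument is self-contained: it avoids using a Positivstellensatz to prove a Positivstellensatz. More importantly, it builds the Hilbert space and the operators $X_j$ that are reused later. In Lemma~\ref{lem:parameter}, the parameter operator $T$ is shown to commute with the $X_j$. In Lemma~\ref{lem:matrix-gns}, the construction is amplified to a setting where the $T_k$ need not commute with one another, so no off-the-shelf moment theorem applies. Your route still feeds into Lemma~\ref{lem:parameter}, because once $\mu$ exists with compact support, the GNS space is identified with $L^2(\mu)$ by polynomial density.

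\textbf{On your ``self-contained'' alternative.} It still relies on Putinar's Positivstellensatz to obtain $\sup|p|+\eps\mp p\in Q$. It therefore repackages the same black box through the Riesz representation theorem rather than giving an independent proof.
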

\begin{proof}
Since $\mathcal L_0(a^2)\geq0$, the form $\ip{a}{b}\coloneqq \mathcal L_0(ab)$ is PSD and satisfies the Cauchy--Schwarz inequality
\[
 |\mathcal L_0(ab)|^2\leq \mathcal L_0(a^2)\mathcal L_0(b^2).
\]
Let $\mathcal N\coloneqq \{a:\mathcal L_0(a^2)=0\}$. If $a\in\mathcal N$, then Cauchy--Schwarz gives $\mathcal L_0(ab)=0$ for every $b\in\A$. Consequently, $\mathcal N$ is a vector subspace, $\mathcal L_0(ab)$ depends only on the classes $[a]$ and $[b]$, and the induced form on $\A/\mathcal N$ is positive definite.

For each $j$ and $a\in\A$, positivity on $a^2(R-\norm{x}^2)$ gives
\begin{equation}\label{eq:coordbound}
 \sum_{j=1}^n \mathcal L_0(x_j^2a^2)\leq R \mathcal L_0(a^2).
\end{equation}
Equip $\A/\mathcal N$ with the inner product $\ip{[a]}{[b]}\coloneqq \mathcal L_0(ab)$ and the induced norm $\norm{[a]}\coloneqq \sqrt{\mathcal L_0(a^2)}$. Its Hilbert completion $\mathcal H$ is the complete inner-product space obtained by adjoining limits of all Cauchy sequences in this norm; thus $\A/\mathcal N$ embeds isometrically as a dense subspace of $\mathcal H$.
In particular, $a\in\mathcal N$ implies $x_ja\in\mathcal N$, so multiplication by $x_j$ is well defined on $\A/\mathcal N$. By \eqref{eq:coordbound}, it extends to a bounded self-adjoint operator $X_j$, of norm at most $\sqrt R$, on $\mathcal H$. The operators commute because they commute on the dense polynomial subspace. The vector $[1]$ is cyclic because $p(X_1,\ldots,X_n)[1]=[p]$ and $\A/\mathcal N$ is dense in $\mathcal H$. The joint spectral theorem
\cite[Theorems~5.21 and~5.23]{schmudgen2012unbounded}, applied to $[1]$,
gives a probability measure $\mu$ representing $\mathcal L_0$. The operator inequality $\sum_jX_j^2\preceq RI$ implies the asserted support bound.
\end{proof}

Let $L^2(\mu)$ denote the Hilbert space of square-integrable functions with respect to $\mu$, modulo equality almost everywhere. Polynomial functions are dense in $L^2(\mu)$, and $[a]\mapsto a$ identifies $\mathcal H$ with $L^2(\mu)$.
\begin{lemma}\label{lem:parameter}
Under the hypotheses of Lemma~\ref{lem:measure}, assume also that $C\pm\la\in\M_x(g)$ for some $C>0$. There is a real Borel function $\tau$ with $|\tau|\leq C$ $\mu$-almost everywhere such that
\begin{equation}\label{eq:L1}
 \mathcal L_1(a)\coloneqq \mathcal L(\la a)=\int a(x)\tau(x)\,d\mu(x)\qquad(a\in\A).
\end{equation}
\end{lemma}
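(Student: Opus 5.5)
The plan is to realize $\mathcal L_1$ as a bounded linear functional on $L^2(\mu)$ and then identify it with a bounded density by Riesz representation. The key input is the identity \eqref{eq:polar-bound} with $q=a$ for $a\in\A$. Since $C\pm\la\in\M_x(g)$ and $\M_x(g)$ is stable under multiplication by squares from $\A$, the elements
\[
 \tfrac14(a+t)^2(C+\la)+\tfrac14(a-t)^2(C-\la)=\tfrac{C}{2}(a^2+t^2)+t\,a\la
\]
lie in $\M_x(g)$ for every real $t$, where we replace $1$ by $t$ and rescale. Applying $\mathcal L$ gives $\tfrac C2(\mathcal L_0(a^2)+t^2)\pm t\mathcal L_1(a)\ge0$. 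More directly, $a^2(C\pm\la)\in\M_x(g)$ gives $|\mathcal L_1(a^2)|\le C\mathcal L_0(a^2)$. For general $a,b\in\A$, use $(a\pm b)^2(C\pm\la)$ to obtain the bilinear bound
\[
 |\mathcal L_1(ab)|\le C\,\|a\|\,\|b\|.
\]
This follows by polarization. The symmetric form $\beta(a,b)\coloneqq\mathcal L_1(ab)$ satisfies $-C\mathcal L_0(a^2)\le\beta(a,a)\le C\mathcal L_0(a^2)$. The standard argument for bounded symmetric forms dominated by a positive form then gives $|\beta(a,b)|\le C\|a\|\|b\|$. Specializing to $b=1$ yields $|\mathcal L_1(a)|\le C\|a\|_{L^2(\mu)}$, since $\|1\|=1$.

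From this bound, $\mathcal L_1$ vanishes on $\mathcal N$ and extends to a bounded linear functional on $\mathcal H\cong L^2(\mu)$, using the density of polynomials stated just before the lemma. The Riesz representation theorem produces $\tau\in L^2(\mu)$ with $\mathcal L_1(a)=\int a\tau\,d\mu$ for all $a\in\A$. We may take $\tau$ to be a real Borel representative.

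It remains to show $|\tau|\le C$ almost everywhere. The plan is to use the full bilinear bound rather than only $b=1$. Since $\beta(a,b)=\mathcal L_1(ab)=\int ab\,\tau\,d\mu$, the inequality $a^2(C\pm\la)\in\M_x(g)$ gives
\[
 \int a^2(C\pm\tau)\,d\mu\ge0\qquad(a\in\A).
\]
Polynomials are dense in $L^2(\mu)$ and $\tau\in L^2$. Still, pass from $a^2$ to arbitrary nonnegative bounded test functions with some care. The measure $\mu$ has compact support, so by Weierstrass, polynomials approximate any continuous $h$ uniformly on $\supp\mu$. Taking $a^2\to h^2$ uniformly with $a$ uniformly bounded, and using $\tau\in L^1(\mu)$, we get $\int h^2(C\pm\tau)\,d\mu\ge0$ for all continuous $h$. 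Continuous nonnegative functions are dense in $L^1$-sense among indicators of Borel sets for the finite Radon measure $\mu$. Hence $\int_E(C\pm\tau)\,d\mu\ge0$ for every Borel set $E$, which gives $|\tau|\le C$ almost everywhere. After modification on a null set we may take $|\tau|\le C$ everywhere.

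I expect the main obstacle to be this last approximation step, passing from squares of polynomials to indicators. Compactness of $\supp\mu$ from Lemma~\ref{lem:measure} is what makes the uniform approximation legitimate. The boundedness step itself is the direct use of the certificates $C\pm\la\in\M_x(g)$.
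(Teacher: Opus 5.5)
Your proof is correct, but it takes a different route from the paper.

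The paper works with the whole bilinear form $(a,b)\mapsto\mathcal L_1(ab)$. It represents this form by a bounded self-adjoint operator $T$ on $\mathcal H\cong L^2(\mu)$ with $\norm{T}\leq C$, and uses the Hankel identity $\mathcal L_1((x_ja)b)=\mathcal L_1(a(x_jb))$ to show that $T$ commutes with the coordinate operators. Via continuous functions and strong-operator limits it extends this commutation to all bounded Borel multipliers, concludes $T=M_\tau$ with $\tau\coloneqq T1$, and reads off $|\tau|\leq C$ from $\int_E|\tau|^2\,d\mu\leq\norm{T}^2\mu(E)$.

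You use only the linear functional $a\mapsto\mathcal L_1(a)$. The estimate $|\mathcal L_1(a)|\leq C\norm{a}$ gives $\tau\in L^2(\mu)$ by Riesz; this estimate already follows from your $t$-identity by optimizing over $t$, so the general polarization is not needed. The $L^\infty$ bound then comes from the diagonal positivity $\int a^2(C\pm\tau)\,d\mu\geq0$, obtained by applying the Riesz formula to the polynomial $a^2$. That step, not the ``full bilinear bound'', is where the multiplicative structure enters your argument.

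Your route is more elementary: it needs no commutant or strong-operator approximation. The paper's route yields the operator identity $T=M_\tau$ directly, and it is the template reused in Lemma~\ref{lem:matrix-gns} and Theorem~\ref{thm:free-matrix}. There the GNS space has multiplicity, and the parameter operators are decomposable but in general not multiplication by scalar densities, so a single Riesz representer would not suffice.

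One step needs tightening. When you pass from continuous test functions to $1_E$, you only know $\tau\in L^2(\mu)$, so $L^1$ approximation of $1_E$ is not enough by itself. Take continuous $0\leq\phi_j\leq1$ with $\phi_j\to1_E$ in $L^1(\mu)$; by the uniform bound this also gives convergence in $L^2(\mu)$. Then conclude by Cauchy--Schwarz against $C\pm\tau\in L^2(\mu)$.
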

\begin{proof}
The parameter bounds, multiplied by $a^2$, imply
\begin{equation}\label{eq:formbound}
 |\mathcal L_1(a^2)|\leq C \mathcal L_0(a^2)\qquad(a\in\A).
\end{equation}
Polarization and rescaling show that
\[
 |\mathcal L_1(ab)|\leq C\sqrt{\mathcal L_0(a^2)\mathcal L_0(b^2)}.
\]
Hence $B([a],[b])\coloneqq \mathcal L_1(ab)$ is well defined on the quotient and extends to a bounded symmetric bilinear form on $\mathcal H$. There is a bounded self-adjoint operator $T$ with $\norm{T}\leq C$ such that $\mathcal L_1(ab)=\ip{T[a]}{[b]}$.

For polynomial vectors,
\[
 \ip{TX_j[a]}{[b]}=\mathcal L_1(x_jab)=\ip{T[a]}{X_j[b]}
 =\ip{X_jT[a]}{[b]}.
\]
Thus $T$ commutes with every $X_j$ and therefore commutes with every polynomial multiplication operator. Since $\mu$ has compact support, uniform density of polynomials in continuous functions implies that $T$ commutes with every continuous function multiplication operator. Regularity of $\mu$ and approximation in $L^2(\mu)$ then show that bounded Borel multiplication operators are strong-operator limits of continuous ones. Hence $T$ commutes with every indicator multiplier $M_{1_E}$ for a Borel set $E$.

Set $\tau\coloneqq T1$. For every Borel set $E$,
\[
 T1_E=T M_{1_E}1=M_{1_E}T1=1_E\tau.
\]
Therefore
\[
 \int_E|\tau|^2\,d\mu=\norm{T1_E}^2
 \leq\norm{T}^2\mu(E).
\]
It follows that $|\tau|\leq\norm{T}\leq C$ almost everywhere. The identity $Tu=\tau u$ holds first for simple functions and then for every $u\in L^2(\mu)$ by density. Thus $T=M_\tau$. Since $T$ is self-adjoint, $\tau$ is real almost everywhere. Taking $b=1$ proves \eqref{eq:L1}.
\end{proof}

\begin{lemma}\label{lem:localization}
With $\mu$ and $\tau$ as above, $(x,\tau(x))\in K$ for $\mu$-almost every $x$.
\end{lemma}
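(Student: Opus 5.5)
We show that each constraint $g_i=g_{i0}+\la g_{i1}$ satisfies $g_i(x,\tau(x))\geq0$ for $\mu$-almost every $x$. Since $K$ is defined by finitely many such inequalities, this gives $(x,\tau(x))\in K$ almost everywhere. The argument uses positivity of $\mathcal L$ on the elements $a^2g_i\in\M_x(g)$, $a\in\A$, together with the representations \eqref{eq:L0} and \eqref{eq:L1}.

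Fix $i$ and put $h(x)\coloneqq g_{i0}(x)+\tau(x)g_{i1}(x)$. This is a bounded Borel function on the compact support of $\mu$, since $|\tau|\leq C$ there. For every $a\in\A$, Lemmas~\ref{lem:measure} and~\ref{lem:parameter} give
\[
 0\leq\mathcal L(a^2g_i)=\mathcal L_0(a^2g_{i0})+\mathcal L_1(a^2g_{i1})
 =\int a^2\,h\,d\mu .
\]
Hence $\int a^2h\,d\mu\geq0$ for every polynomial $a$.

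It remains to pass from polynomial test functions to indicators. Let $E\coloneqq\{x:h(x)<0\}$. Polynomials are dense in $L^2(\mu)$, so we may choose polynomials $a_k\to1_E$ in $L^2(\mu)$. Truncation is what makes the limit work. By Weierstrass and Lusin we may instead take continuous $\phi_k$ with $0\leq\phi_k\leq1$ and $\phi_k\to1_E$ $\mu$-almost everywhere. We then approximate each $\phi_k$ uniformly on $\supp\mu$ by polynomials $a_k$ with $|a_k-\phi_k|\leq1/k$. The functions $a_k^2$ are then uniformly bounded on $\supp\mu$ and converge $\mu$-almost everywhere to $1_E$. Since $h$ is bounded, dominated convergence gives
\[
 \int a_k^2h\,d\mu\to\int_Eh\,d\mu .
\]
The limit is therefore $\geq0$. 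But $h<0$ on $E$, so $\mu(E)=0$.

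The main obstacle is this approximation step, because $h$ involves the merely Borel function $\tau$. Convergence of $a_k^2$ in $L^1(\mu)$ together with boundedness of $h$ would also suffice. I would rely on the uniformly bounded almost-everywhere approximation above, via Lusin's theorem on the compact support and Stone--Weierstrass, because it makes dominated convergence immediately applicable. Taking the union over the finitely many indices $i$ of these null sets yields the lemma.
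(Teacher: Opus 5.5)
Your proof is correct and is essentially the paper's argument: positivity of $\mathcal L$ on $a^2g_i$ gives $\int a^2h\,d\mu\geq0$ for every polynomial $a$; you then pass to the indicator of $\{h<0\}$ and take the union of the finitely many null sets. The only difference is the approximation step, where the paper simply takes polynomials $a_j\to1_E$ in $L^2(\mu)$ and uses $\int|a_j^2-1_E|\,d\mu\leq\norm{a_j-1_E}_{L^2(\mu)}\norm{a_j+1_E}_{L^2(\mu)}\to0$ together with boundedness of $h$ (exactly the $L^1$ alternative you mention), so no truncation is needed and your Lusin/Stone--Weierstrass detour is valid but unnecessary.
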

\begin{proof}
For every $a\in\A$ and $i\in[m]$, positivity on $a^2g_i$ implies
\[
 0\leq \mathcal L(a^2g_i)=\int a(x)^2\bigl(g_{i0}(x)+\tau(x)g_{i1}(x)\bigr)\,d\mu(x).
\]
The expression in parentheses is bounded and measurable. If a bounded real function $h$ satisfies $\int a^2h\,d\mu\geq0$ for every polynomial $a$, density of polynomials in $L^2(\mu)$ gives the same inequality for every $u\in L^2(\mu)$. Indeed, $a_j\to u$ in $L^2(\mu)$ implies $\int|a_j^2-u^2|\,d\mu\to0$. Taking $u$ to be the indicator of $\{h<0\}$ shows that $h\geq0$ almost everywhere. Applying this observation to each $g_i(x,\tau(x))$ and taking the union of the finitely many null sets yields the desired result.
\end{proof}

\begin{proof}[Proof of Theorem~\ref{thm:scalar}]
Suppose that $f\notin\M_x(g)$. Lemma~\ref{lem:sep} provides a normalized positive functional $\mathcal L$ with $\mathcal L(f)\leq0$. The bounds required by Lemmas~\ref{lem:measure} and~\ref{lem:parameter} follow from Lemma~\ref{lem:bounds}. Lemma~\ref{lem:localization} gives a probability measure $\mu$ concentrated on a feasible graph,
so $K$ is nonempty. Compactness and strict positivity then give
$\delta>0$ such that $f\geq\delta$ on $K$. It follows that
\[
 \mathcal L(f)=\int f(x,\tau(x))\,d\mu(x)\geq\delta,
\]
contradicting $\mathcal L(f)\leq0$.
\end{proof}

\subsection{Several affine parameters}
The preceding argument applies to all parameter components with respect to the same principal-variable measure.

\begin{lemma}\label{thm:graph}
Suppose that $\M_x(g)$ is restricted Archimedean. A linear functional $\mathcal L:\V_r\to\R$ satisfies
\begin{equation}\label{eq:positiveL}
 \mathcal L(1)=1,\qquad \mathcal L(\M_x(g))\subseteq[0,\infty)
\end{equation}
if and only if there exists a compactly supported Borel probability measure $\mu$ on $\R^n$ and a bounded Borel map $\tau:\supp\mu\to\R^r$ such that
\begin{equation}\label{eq:graph}
 (x,\tau(x))\in K\quad\mu\text{-a.e.},\qquad
 \mathcal L(p)=\int p(x,\tau(x))\,d\mu(x)\quad(p\in\V_r).
\end{equation}
The measure $\mu$ is determined by $\mathcal L$, and each component of $\tau$ is determined $\mu$-almost everywhere.
\end{lemma}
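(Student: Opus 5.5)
The plan is to prove the two implications separately. For the converse direction (measure $\Rightarrow$ functional), take $\mu$ and $\tau$ as in \eqref{eq:graph} and define $\mathcal L$ by the integral formula. Boundedness of $\tau$ and compactness of $\supp\mu$ make $p(x,\tau(x))$ bounded and Borel for every $p\in\V_r$, so $\mathcal L$ is a well-defined linear functional. It satisfies $\mathcal L(1)=\mu(\R^n)=1$. For an element $\sigma_0+\sum_i\sigma_ig_i$ of $\M_x(g)$, the integrand is $\sigma_0(x)+\sum_i\sigma_i(x)g_i(x,\tau(x))$. Since the $\sigma_i$ depend only on $x$, this is $\geq0$ $\mu$-a.e. by $(x,\tau(x))\in K$, which gives \eqref{eq:positiveL}. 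This is exactly where restricting the multipliers to $x$ matters: substituting $\lambda=\tau(x)$ commutes with multiplication by $\sigma_i(x)$.

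For the forward direction, restricted Archimedeanity and Lemma~\ref{lem:bounds} give $R-\norm{x}^2\in\M_x(g)$ and $C_k\pm\la_k\in\M_x(g)$ for every $k$. Lemma~\ref{lem:measure} applies verbatim to $\mathcal L$ restricted to $\A$, since its proof uses only $\mathcal L(\M_x(g)\cap\A)\geq0$. It produces $\mu$ supported in the ball of radius $\sqrt R$. For each $k$, the functional $a\mapsto\mathcal L(\la_ka)$ satisfies $|\mathcal L(\la_ka^2)|\leq C_k\mathcal L(a^2)$, because $a^2(C_k\pm\la_k)\in\M_x(g)$. The proof of Lemma~\ref{lem:parameter} therefore runs componentwise on the same Hilbert space $\mathcal H\cong L^2(\mu)$. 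It yields bounded Borel functions $\tau_k$ with $|\tau_k|\leq C_k$ and $\mathcal L(\la_ka)=\int a\tau_k\,d\mu$. Set $\tau=(\tau_1,\ldots,\tau_r)$, modified on a null set so that it is bounded everywhere on $\supp\mu$. Linearity then gives the integral formula for every $p=p_0+\sum_k\la_kp_k$.

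Localization follows as in Lemma~\ref{lem:localization}. For each $i$, $0\leq\mathcal L(a^2g_i)=\int a^2h_i\,d\mu$ with $h_i(x)=g_{i0}(x)+\sum_k\tau_k(x)g_{ik}(x)$ bounded on $\supp\mu$. Density of polynomials in $L^2(\mu)$ then gives $h_i\geq0$ a.e., and a finite union of null sets yields $(x,\tau(x))\in K$ $\mu$-a.e. The key point to check is that all $r$ components use the \emph{same} $\mu$. This holds because $\mu$ comes only from $\mathcal L|_{\A}$, and the operators $T_k$ all live on that single Hilbert space.

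Uniqueness is the step needing the most care, though it is short. Any $\mu$ in a representation satisfies $\int a\,d\mu=\mathcal L(a)$ for all $a\in\A$, so its moments are fixed. A compactly supported measure is determined by its moments, by Stone--Weierstrass density of polynomials in continuous functions on a large ball containing both candidate supports. Given two maps $\tau,\tau'$ for the same $\mu$, the function $\tau_k-\tau_k'$ lies in $L^2(\mu)$ and is orthogonal to all polynomials. Density of polynomials in $L^2(\mu)$ then forces $\tau_k=\tau_k'$ $\mu$-a.e.
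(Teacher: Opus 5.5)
Your proposal is correct and follows the paper's proof essentially step for step. You obtain $\mu$ from Lemma~\ref{lem:measure} applied to $\mathcal L|_{\A}$, and you realize each $\la_k$ as a multiplication operator $T_k=M_{\tau_k}$ on the same $L^2(\mu)$ via Lemma~\ref{lem:parameter}, modifying $\tau$ on a null set so that it is bounded everywhere. Localization comes from the density argument of Lemma~\ref{lem:localization}, the converse from pointwise evaluation of restricted certificates on the graph, and uniqueness from Stone--Weierstrass together with orthogonality to polynomials in $L^2(\mu)$; all of this matches the paper.
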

\begin{proof}
First assume \eqref{eq:positiveL}. The restriction $\mathcal L_0\coloneqq \mathcal L|_\A$ gives a measure $\mu$ by the proof of Lemma~\ref{lem:measure}. For each $k$, set $\mathcal L_k(a)\coloneqq \mathcal L(\la_ka)$. The bound $C_k\pm\la_k\in\M_x(g)$ gives a bounded self-adjoint operator $T_k$ on $L^2(\mu)$ with $\mathcal L_k(ab)=\ip{T_ka}{b}$. Each $T_k$ commutes with the coordinate multiplication operators. Hence, as in the proof of Lemma~\ref{lem:parameter}, $T_k=M_{\tau_k}$ for an essentially bounded real function $\tau_k$.
At this stage, $\tau_k$ is an equivalence class of measurable functions modulo equality $\mu$-almost everywhere. Choose a Borel representative of each class. Since $\norm{\tau_k}_\infty\leq C_k$, the Borel set
\[
 N_k\coloneqq \{x\in\supp\mu:|\tau_k(x)|>C_k\}
\]
is $\mu$-null. The finite union $N\coloneqq \bigcup_{k=1}^rN_k$ is also $\mu$-null. Redefine every $\tau_k$ to be zero on $N$ and set
\[
 \tau(x)\coloneqq (\tau_1(x),\ldots,\tau_r(x)).
\]
The resulting map $\tau:\supp\mu\to\R^r$ is Borel and bounded everywhere. These modifications do not change the integral formulas for the $\mathcal L_k$, because they occur only on a $\mu$-null set.

For every $i$ and $a\in\A$,
\[
 0\leq \mathcal L(a^2g_i)=\int a^2\left(g_{i0}+\sum_k\tau_kg_{ik}\right)d\mu.
\]
The density argument of Lemma~\ref{lem:localization} proves feasibility almost everywhere. Linearity gives the integral identity in \eqref{eq:graph}. Conversely, evaluating any restricted SOS certificate on a feasible graph gives a nonnegative integrand, proving \eqref{eq:positiveL}.

Compactly supported measures are determined by their polynomial moments: on a compact set containing the supports of two candidate measures, polynomials are uniformly dense in the continuous functions. Thus $\mathcal L_0$ determines $\mu$. If $\tau_k$ and $\widetilde\tau_k$ give the same $\mathcal L_k$, then their difference is orthogonal to every polynomial in $L^2(\mu)$ and hence vanishes almost everywhere.
\end{proof}

\begin{theorem}[Restricted-multiplier Positivstellensatz]\label{thm:main}
Let $f,g_1,\ldots,g_m\in\V_r$. If $\M_x(g)$ is restricted Archimedean and $f>0$ on $K$, then $f\in\M_x(g)$.
\end{theorem}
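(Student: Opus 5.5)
The proof will follow the scheme used for Theorem~\ref{thm:scalar}, with Lemma~\ref{thm:graph} taking the place of Lemmas~\ref{lem:measure}--\ref{lem:localization}. We argue by contradiction and assume $f\notin\M_x(g)$.

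First, $\M_x(g)$ is a convex cone in $\V_r$ that contains $1$. Restricted Archimedeanity says exactly that $1$ is an order unit for $\M_x(g)$ in $\V_r$. Lemma~\ref{lem:sep} therefore yields a linear functional $\mathcal L:\V_r\to\R$ with $\mathcal L(1)=1$, $\mathcal L(\M_x(g))\subseteq[0,\infty)$ and $\mathcal L(f)\leq0$.

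Second, $\mathcal L$ satisfies \eqref{eq:positiveL}, so Lemma~\ref{thm:graph} applies. It gives a compactly supported Borel probability measure $\mu$ and a bounded Borel map $\tau$ such that $(x,\tau(x))\in K$ for $\mu$-almost every $x$ and
\[
 \mathcal L(p)=\int p(x,\tau(x))\,d\mu(x)\qquad(p\in\V_r).
\]
Since $\mu$ is a probability measure, it is nonzero, so the feasible graph point exists for some $x$ and $K\neq\varnothing$. As noted in Section~\ref{sec:prelim}, restricted Archimedeanity makes $K$ compact. The polynomial $f$ is continuous and strictly positive on the nonempty compact set $K$, so there is $\delta>0$ with $f\geq\delta$ on $K$.

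Third, we evaluate $\mathcal L(f)$. Because $f\in\V_r$, the integral formula applies and gives $\mathcal L(f)=\int f(x,\tau(x))\,d\mu(x)$. The integrand is at least $\delta$ $\mu$-almost everywhere. It is also bounded and Borel, since $\tau$ is bounded and $\mu$ is compactly supported. Hence $\mathcal L(f)\geq\delta>0$, which contradicts $\mathcal L(f)\leq0$, and so $f\in\M_x(g)$.

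The only delicate step is the representation of positive functionals. The whole argument depends on $f$ lying in $\V_r$, i.e.\ being affine in $\la$: this is what lets $\mathcal L(f)$ be computed by substituting the conditional barycenter $\tau(x)$. That representation is already supplied by Lemma~\ref{thm:graph}, so the remaining argument is routine. The empty-$K$ case needs no separate treatment, because the graph representation itself forces $K\neq\varnothing$.
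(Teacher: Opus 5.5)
Your proof is correct and follows essentially the same route as the paper: order-unit separation via Lemma~\ref{lem:sep}, the graph representation of Lemma~\ref{thm:graph}, and nonemptiness of $K$ from the full-mass feasible graph. The uniform margin $\delta>0$ on the compact set $K$ then forces $\mathcal L(f)\geq\delta$, contradicting $\mathcal L(f)\leq0$.
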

\begin{proof}
Suppose that $f\notin\M_x(g)$. Lemma~\ref{lem:sep} gives a normalized positive functional $\mathcal L$ with $\mathcal L(f)\leq0$, and Lemma~\ref{thm:graph} represents $\mathcal L$ by a probability measure $\mu$ and a Borel map $\tau$ satisfying $(x,\tau(x))\in K$ for $\mu$-almost every $x$. If $K$ were empty, this would be impossible: no point $(x,\tau(x))$ could belong to $K$, whereas $\mu$ has total mass one. Hence $K$ is nonempty. Compactness and strict positivity then give $\delta>0$ such that $f\geq\delta$ on $K$, and therefore
\[
 \mathcal L(f)=\int f(x,\tau(x))\,d\mu(x)\geq\delta,
\]
contradicting $\mathcal L(f)\leq0$.
\end{proof}

\begin{corollary}\label{cor:empty}
Under restricted Archimedeanity, $K=\varnothing$ if and only if $-1\in\M_x(g)$.
\end{corollary}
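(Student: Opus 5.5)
The plan is to prove the two implications separately, using only Theorem~\ref{thm:main} and the fact that every element of $\M_x(g)$ is nonnegative on $K$.

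\emph{Easy direction.} Suppose $-1\in\M_x(g)$. Write $-1=\sigma_0+\sum_i\sigma_i g_i$ with $\sigma_i\in\sos$. If there were a point $(x,\la)\in K$, every $g_i(x,\la)\geq0$, so the right-hand side would be nonnegative at that point, whereas the left-hand side equals $-1$. This is a contradiction, so $K=\varnothing$. This direction does not use restricted Archimedeanity.

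\emph{Main direction.} Suppose $K=\varnothing$, and apply Theorem~\ref{thm:main} to the constant $f\coloneqq-1\in\V_r$. The hypothesis ``$f>0$ on $K$'' holds vacuously because $K$ is empty. Theorem~\ref{thm:main} is stated for arbitrary $K$, and its proof explicitly treats the empty case. Indeed, if $-1\notin\M_x(g)$, then Lemma~\ref{lem:sep} yields a normalized positive functional $\mathcal L$. Lemma~\ref{thm:graph} then represents $\mathcal L$ by a probability measure $\mu$ concentrated on a feasible graph, which forces $K\neq\varnothing$. Hence Theorem~\ref{thm:main} gives $-1\in\M_x(g)$.

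The only point needing care is this vacuous application. The step in Theorem~\ref{thm:main} that produces $\delta>0$ from compactness is never reached when $K=\varnothing$, because the contradiction already arises from $\mu$ having total mass one while no point $(x,\tau(x))$ lies in $K$. No further obstacle is expected. One may also note that, once $-1\in\M_x(g)$, the cone is all of $\V_r$: for any $p$, Archimedeanity gives $N+p\in\M_x(g)$, and adding $N\cdot(-1)$ yields $p\in\M_x(g)$. This remark is not needed for the corollary.
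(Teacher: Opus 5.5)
Your proposal is correct and follows the paper's proof: the forward direction applies Theorem~\ref{thm:main} to $f=-1$, whose strict positivity on the empty set holds vacuously, and the converse evaluates the identity $-1=\sigma_0+\sum_i\sigma_i g_i$ at a hypothetical point of $K$. Your remark that the empty case is already handled inside the proof of Theorem~\ref{thm:main}, via the total mass of $\mu$, matches how the paper sets up that proof.
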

\begin{proof}
Suppose first that $K=\varnothing$. Then the assertion that $-1$ is strictly positive on $K$ is vacuously true, because there is no point of $K$ at which it could fail. Theorem~\ref{thm:main}, applied to $f=-1$, therefore gives $-1\in\M_x(g)$.
Conversely, suppose that $-1\in\M_x(g)$. Then there are $\sigma_0,\ldots,\sigma_m\in\sos$ such that
\[
 -1=\sigma_0+\sum_{i=1}^m\sigma_i g_i.
\]
If $(x,\la)\in K$, then $\sigma_i(x)\geq0$ and $g_i(x,\la)\geq0$ for every $i$. Evaluating the identity at $(x,\la)$ would therefore give
\[
 -1=\sigma_0(x)+\sum_{i=1}^m\sigma_i(x)g_i(x,\la)\geq0,
\]
which is impossible. Hence $K=\varnothing$.
\end{proof}

% \begin{remark}[Barycenters and graph measures]\label{rem:barycenter}
% The graph representation does not imply that every representing measure on $K$ is concentrated on a graph. Given a probability measure $\nu$ on $K$, disintegrate it over its $x$-marginal as $\nu_x\,d\mu(x)$. Its conditional mean $\tau(x)\coloneqq \int\la\,d\nu_x(\la)$ is feasible almost everywhere because the fibers are convex. For affine $p$, one has $\int p\,d\nu=\int p(x,\tau(x))\,d\mu(x)$. Thus only the conditional mean is visible on $\V_r$; conditional variances are not part of this moment problem.
% \end{remark}

\subsection{Equality constraints}
We next incorporate
affine equalities using arbitrary polynomial multipliers in $x$.
Let $h_j=h_{j0}+\sum_k\la_kh_{j k}\in\V_r$ and define the linear subspace
\begin{equation}\label{eq:equalspace}
 \I_x(h)\coloneqq \left\{\sum_{j=1}^t \omega_j(x)h_j(x,\la):\omega_j\in\A\right\}.
\end{equation}
This is an $\A$-submodule of $\V_r$, rather than an ideal of $\R[x,\la]$. Set $K_{g,h}\coloneqq \{g_i\geq0,\ h_j=0\}$.

\begin{theorem}\label{thm:equality}
If $1$ is an order unit for $\M_x(g)+\I_x(h)$ and $f\in\V_r$ is strictly positive on $K_{g,h}$, then
\begin{equation}\label{eq:equalcert}
 f=\sigma_0+\sum_i\sigma_i g_i+\sum_j \omega_j h_j,
 \qquad \sigma_i\in\sos,\quad \omega_j\in\A.
\end{equation}
The order-unit hypothesis is equivalent to \eqref{eq:bounds} with $\M_x(g)+\I_x(h)$ in place of $\M_x(g)$.
\end{theorem}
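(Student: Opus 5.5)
We follow the proof of Theorem~\ref{thm:main}, with the cone $C\coloneqq \M_x(g)+\I_x(h)$ in place of $\M_x(g)$. First we check that Lemma~\ref{lem:bounds} applies to $C$, which settles the equivalence with \eqref{eq:bounds}. Clearly $C$ is a convex cone in $\V_r$ containing $1$. It is stable under multiplication by squares $a^2$ with $a\in\A$, because $a^2\M_x(g)\subseteq\M_x(g)$ and $a^2\I_x(h)\subseteq\I_x(h)$, the latter being an $\A$-submodule. Lemma~\ref{lem:bounds} therefore shows that the order-unit hypothesis is equivalent to \eqref{eq:bounds} with $C$ in place of $\M_x(g)$.

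For the certificate, suppose $f\notin C$. Lemma~\ref{lem:sep} gives a linear functional $\mathcal L$ on $\V_r$ with $\mathcal L(1)=1$, $\mathcal L(C)\subseteq[0,\infty)$ and $\mathcal L(f)\leq0$. Since $\I_x(h)$ is a linear subspace contained in $C$, both $\pm\omega h_j$ lie in $C$. Hence $\mathcal L$ vanishes on $\I_x(h)$; in particular $\mathcal L(a h_j)=0$ for all $a\in\A$.

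Next we rerun the construction of Lemma~\ref{thm:graph}, which uses only positivity of $\mathcal L$ on $a^2\cdot(\text{bounds})$ and on $a^2 g_i$.
\begin{itemize}
\item Positivity on $a^2(R-\norm{x}^2)$, now a member of $C$, yields the measure $\mu$ through Lemma~\ref{lem:measure}.
\item Positivity on $a^2(C_k\pm\la_k)$ yields the operators $T_k=M_{\tau_k}$, with a bounded Borel $\tau$.
\item The density argument of Lemma~\ref{lem:localization} gives $g_i(x,\tau(x))\geq0$ $\mu$-a.e.
\end{itemize}

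The new step is the equality constraints. For every $a\in\A$ the integral formula gives
\[
0=\mathcal L(a h_j)=\int a(x)\Bigl(h_{j0}(x)+\sum_k\tau_k(x)h_{jk}(x)\Bigr)\,d\mu(x).
\]
The bracket $\phi_j\coloneqq h_j(x,\tau(x))$ is bounded and Borel, so it lies in $L^2(\mu)$. It is orthogonal to all polynomials, which are dense in $L^2(\mu)$, so $\phi_j=0$ $\mu$-a.e. Removing finitely many null sets, we get $(x,\tau(x))\in K_{g,h}$ $\mu$-a.e.

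We conclude as in Theorem~\ref{thm:main}. Since $\mu$ has mass one, $K_{g,h}\neq\varnothing$. The order-unit property forces $K_{g,h}$ to be bounded, since every element of $C$ is nonnegative on it, and it is closed, hence compact. So $f\geq\delta>0$ on $K_{g,h}$, and therefore $\mathcal L(f)=\int f(x,\tau(x))\,d\mu\geq\delta$. This contradicts $\mathcal L(f)\leq0$, so $f\in C$, which is exactly \eqref{eq:equalcert}.

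The main point to check carefully is that the measure and barycenter construction only ever uses positivity on products $a^2 c$ with $c\in C$. Adding the subspace $\I_x(h)$ does not disturb this, and the new equality information enters only through the annihilation step above. That step is routine once we note it uses arbitrary polynomial multipliers $a$, not only squares.
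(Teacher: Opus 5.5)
Your proposal is correct and follows the paper's proof essentially step by step. It invokes Lemma~\ref{lem:bounds} for the equivalence, uses the fact that the separating functional annihilates the subspace $\I_x(h)$, reruns the measure and parameter construction, and applies polynomial density in $L^2(\mu)$ to force $h_j(x,\tau(x))=0$ almost everywhere before concluding by separation and strict positivity. Your extra checks (that $\I_x(h)$ is an $\A$-submodule, so the sum cone is stable under squares, and that $K_{g,h}$ is nonempty and compact) only make explicit what the paper leaves implicit.
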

\begin{proof}
The equivalence follows from Lemma~\ref{lem:bounds}. A normalized functional nonnegative on $\M_x(g)+\I_x(h)$ vanishes on $\I_x(h)$ because both signs of every element of this subspace belong to $\M_x(g)+\I_x(h)$. The proofs of the measure and parameter representation lemmas apply to $\M_x(g)+\I_x(h)$, since it contains the required square multiples of its bounds. They produce $\mu$ and $\tau$ with $g_i(x,\tau(x))\geq0$ almost everywhere. For every $a\in\A$,
\[
 0=\mathcal L(ah_j)=\int a(x)h_j(x,\tau(x))\,d\mu(x).
\]
The second factor is bounded. Polynomial density in $L^2(\mu)$ implies $h_j(x,\tau(x))=0$ almost everywhere. Separation and strict positivity now give the result.
\end{proof}

\begin{remark}
If there are no principal variables, $\sos=\R_+$ and $\M_x(g)$ consists of nonnegative linear combinations of $1,g_1,\ldots,g_m$. For a nonempty polyhedron, the affine Farkas lemma
\cite[Corollary~22.3.1]{Rockafellar1970} represents every affine function nonnegative on that polyhedron in this cone. On a compact polyhedron, this includes Theorem~\ref{thm:main} as a special case. The classical affine result also covers noncompact polyhedra and nonnegative objectives; those stronger conclusions use finite-dimensional polyhedral structure.
\end{remark}

%% file: sections/04_moment.tex
\section{The full restricted moment problem}\label{sec:moment}
The graph representation in Lemma~\ref{thm:graph} gives an exact solution of the moment problem naturally associated with $\M_x(g)$.  Since $\V_r$ consists of polynomials affine in $\la$, its dual is described
by one ordinary moment sequence and $r$ mixed moment sequences.
Let
\[
 y=(y_\alpha)_{\alpha\in\N^n}\in\mathbb R^{\N^n},\qquad
 z^{(k)}=(z_\alpha^{(k)})_{\alpha\in\N^n}\in\mathbb R^{\N^n},\quad k\in[r].
\]
A probability measure $\rho$ on $K$ represents $y$ and $\{z^{(k)}\}_{k\in[r]}$ if
\begin{equation}\label{eq:full-moments-intended}
 y_\alpha=\int_Kx^\alpha\,d\rho(x,\la),\qquad
 z_\alpha^{(k)}=\int_Kx^\alpha\la_k\,d\rho(x,\la).
\end{equation}
Define the associated linear functional $\mathcal L_{y,z}:\V_r\to\R$ by
\begin{equation}\label{eq:riesz-full}
 \mathcal L_{y,z}\left(p_0+\sum_{k=1}^r\la_kp_k\right)
 \coloneqq \sum_\alpha(p_0)_\alpha y_\alpha
   +\sum_{k=1}^r\sum_\alpha(p_k)_\alpha z_\alpha^{(k)}.
\end{equation}
The infinite moment matrix and the restricted localizing matrices are
\begin{align}
 M(y)&\coloneqq \bigl(y_{\alpha+\beta}\bigr)_{\alpha,\beta\in\N^n},
 \label{eq:infinite-moment-matrix}\\
 M(g_i;y,z)&\coloneqq M(g_{i0}y)+\sum_{k=1}^rM(g_{ik}z^{(k)}).
 \label{eq:infinite-localizing-matrix}
\end{align}
Thus, if $g_{ik}=\sum_\gamma g_{ik,\gamma}x^\gamma$, then
\begin{equation}\label{eq:infinite-localizing-entry}
 [M(g_i;y,z)]_{\alpha,\beta}
 =\sum_\gamma g_{i0,\gamma}y_{\alpha+\beta+\gamma}
  +\sum_{k=1}^r\sum_\gamma
       g_{ik,\gamma}z_{\alpha+\beta+\gamma}^{(k)}.
\end{equation}
An infinite symmetric matrix is PSD when each of its finite
principal submatrices is PSD.  With this convention,
\begin{align*}
 M(y)\succeq0
 &\quad\Longleftrightarrow\quad
 \mathcal L_{y,z}(q^2)\geq0 &&(q\in\R[x]),\\
 M(g_i;y,z)\succeq0
 &\quad\Longleftrightarrow\quad
 \mathcal L_{y,z}(q^2g_i)\geq0 &&(q\in\R[x]).
\end{align*}

These matrix inequalities characterize representability under the
restricted Archimedean hypothesis.

\begin{theorem}\label{thm:full-moment}
Suppose that $\M_x(g)$ is restricted Archimedean.  For pseudo-moment sequences
$y,z^{(1)},\ldots,z^{(r)}$, the following statements are equivalent:
\begin{enumerate}[label=(\roman*)]
\item There is a Borel probability measure $\rho$ supported on $K$ that
satisfies \eqref{eq:full-moments-intended} for every $\alpha\in\N^n$ and
$k\in[r]$.
\item $y_0=1$ and $\mathcal L_{y,z}$ is nonnegative on $\M_x(g)$.
\item
\[
 y_0=1,\qquad M(y)\succeq0,\qquad
 M(g_i;y,z)\succeq0\quad(i\in[m]).
\]
\end{enumerate}
Whenever these conditions hold, there is a compactly supported probability
measure $\mu$ on $\R^n$ and functions
$\tau_k\in L^\infty(\mu)$ such that
\begin{equation}\label{eq:full-graph-moments}
 y_\alpha=\int x^\alpha\,d\mu(x),\qquad
 z_\alpha^{(k)}=\int x^\alpha\tau_k(x)\,d\mu(x),
\end{equation}
and $(x,\tau(x))\in K$ for $\mu$-almost every $x$.  If the bounds in
\eqref{eq:bounds} hold, the representatives may be chosen so that
$|\tau_k|\leq C_k$ almost everywhere.  In particular, the pushforward
\begin{equation}\label{eq:canonical-graph-measure}
 \rho\coloneqq \bigl(x\mapsto(x,\tau(x))\bigr)_\#\mu
\end{equation}
is a representing measure.
\end{theorem}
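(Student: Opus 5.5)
We prove the cycle (i)$\Rightarrow$(iii)$\Rightarrow$(ii)$\Rightarrow$(i), and read off the graph representation from the step (ii)$\Rightarrow$(i).

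\emph{(i)$\Rightarrow$(iii).} Let $\rho$ be a representing probability measure on $K$. Then $y_0=\rho(K)=1$. By \eqref{eq:full-moments-intended} and linearity, $\mathcal L_{y,z}(p)=\int_K p\,d\rho$ for every $p\in\V_r$. Taking $p=q^2$ or $p=q^2g_i$ with $q\in\R[x]$ gives a nonnegative integrand on $K$. The displayed equivalences between these inequalities and positive semidefiniteness of the infinite matrices then yield $M(y)\succeq0$ and $M(g_i;y,z)\succeq0$. The only point to check is integrability: $K$ is compact, because restricted Archimedeanity forces boundedness as noted after Definition~\ref{def:arch}, so all these integrals are finite.

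\emph{(iii)$\Rightarrow$(ii).} An element of $\M_x(g)$ is $\sigma_0+\sum_i\sigma_ig_i$ with $\sigma_i=\sum_j q_{ij}^2$ and $q_{ij}\in\R[x]$. By linearity,
\[
\mathcal L_{y,z}\Bigl(\sigma_0+\sum_i\sigma_ig_i\Bigr)=\sum_j\mathcal L_{y,z}(q_{0j}^2)+\sum_i\sum_j\mathcal L_{y,z}(q_{ij}^2g_i).
\]
Each summand is nonnegative by the equivalences stated just before the theorem, so $\mathcal L_{y,z}$ is nonnegative on $\M_x(g)$. Together with $y_0=1$ this is (ii).

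\emph{(ii)$\Rightarrow$(i) and the graph representation.} Note that $\mathcal L_{y,z}(1)=y_0=1$. We apply Lemma~\ref{thm:graph} to $\mathcal L=\mathcal L_{y,z}$. It gives a compactly supported probability measure $\mu$ and a bounded Borel map $\tau$ with $(x,\tau(x))\in K$ $\mu$-a.e. such that $\mathcal L(p)=\int p(x,\tau(x))\,d\mu$ for all $p\in\V_r$. Evaluating at $p=x^\alpha$ and at $p=\la_kx^\alpha$ gives \eqref{eq:full-graph-moments}.

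Lemma~\ref{thm:graph} only guarantees that $\tau$ is bounded, so the bound $|\tau_k|\leq C_k$ must be obtained separately. If \eqref{eq:bounds} holds, the proof of Lemma~\ref{thm:graph} (through Lemma~\ref{lem:parameter}) yields $\|\tau_k\|_\infty\le\|T_k\|\le C_k$. Its redefinition on a null set makes $|\tau_k|\leq C_k$ hold everywhere on $\supp\mu$.

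Next define $\rho$ as in \eqref{eq:canonical-graph-measure}. The map $x\mapsto(x,\tau(x))$ is Borel, so $\rho$ is a Borel probability measure. Let $E=\{x:(x,\tau(x))\in K\}$, a Borel set with $\mu(E)=1$. Then $\rho(K)\geq\mu(E)=1$, so $\rho$ is supported on the closed set $K$. The change-of-variables formula gives $\int_K x^\alpha\la_k\,d\rho=\int x^\alpha\tau_k(x)\,d\mu=z^{(k)}_\alpha$, and similarly for $y_\alpha$. Hence $\rho$ satisfies (i).

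Since (i) and (ii) are equivalent, whenever the conditions hold Lemma~\ref{thm:graph} applies to $\mathcal L_{y,z}$. This gives the final assertions of the theorem in all cases, not only along the implication (ii)$\Rightarrow$(i).

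The substantive analytic work is already done in Lemma~\ref{thm:graph}, so no step is a serious obstacle. The main care point is the measure-theoretic bookkeeping in the last step: choosing Borel representatives with the everywhere bound, and verifying that the pushforward charges only $K$ even though feasibility holds only $\mu$-a.e.
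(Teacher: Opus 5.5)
Your proof is correct and follows essentially the same route as the paper. The paper gets (ii)$\Leftrightarrow$(iii) and (i)$\Rightarrow$(ii) from integration and the displayed matrix/functional equivalences, and (ii)$\Rightarrow$(i) by applying Lemma~\ref{thm:graph} to $\mathcal L_{y,z}$, taking $|\tau_k|\le C_k$ from Lemma~\ref{lem:parameter}, and pushing $\mu$ forward along the graph map; your single cycle and the extra detail on why the pushforward is concentrated on $K$ differ only cosmetically.
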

\begin{proof}
The equivalence of (ii) and (iii) follows from
\eqref{eq:infinite-moment-matrix}--\eqref{eq:infinite-localizing-entry} and the
definition of $\M_x(g)$.  Condition (i) implies (ii) by integration.  If
(ii) holds, Theorem~\ref{thm:graph}, applied to $\mathcal L_{y,z}$, gives $\mu$ and
$\tau$ satisfying \eqref{eq:full-graph-moments} and the almost-everywhere
feasibility statement.  The parameter bounds follow from
Lemma~\ref{lem:parameter}, component by component.  Finally,
\eqref{eq:canonical-graph-measure} satisfies
\eqref{eq:full-moments-intended}, so (i) follows.
\end{proof}

% No moments containing products $\la_j\la_k$ or higher powers of $\la$
% are needed: the data in \eqref{eq:riesz-full} specify every linear functional
% on $\V_r$.

The canonical graph measure need not be the only measure with the prescribed
restricted moments.  The next result characterizes this nonuniqueness.
For $x\in\R^n$, write
\[
 \Lambda(x)\coloneqq \{\la\in\R^r:g_i(x,\la)\geq0,\ i\in[m]\}.
\]
Every $\Lambda(x)$ is a closed convex set.

\begin{proposition}\label{prop:all-representing-measures}
Assume the equivalent conditions of Theorem~\ref{thm:full-moment}, and let
$(\mu,\tau)$ be its uniquely determined graph data, up to $\mu$-null sets.
A probability measure $\rho$ on $K$ represents $(y,z)$ if and only if it has
a disintegration
\begin{equation}\label{eq:disintegration}
 \rho(dx,d\la)=\mu(dx)\,\pi_x(d\la)
\end{equation}
such that, for $\mu$-almost every $x$,
\begin{equation}\label{eq:kernel-conditions}
 \supp\pi_x\subseteq \Lambda(x),\qquad
 \int_{\Lambda(x)}\la\,d\pi_x(\la)=\tau(x).
\end{equation}
Thus all representing measures have the same $x$-marginal and the same
conditional parameter barycenter.
\end{proposition}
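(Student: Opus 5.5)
Both directions reduce to the uniqueness part of Lemma~\ref{thm:graph}, together with a disintegration of $\rho$ over its $x$-marginal.

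\emph{Sufficiency.} Suppose $\rho=\mu\otimes\pi_x$ satisfies \eqref{eq:kernel-conditions}. Since $\supp\pi_x\subseteq\Lambda(x)$ for $\mu$-a.e.\ $x$, the measure $\rho$ is concentrated on $K$. For $p=p_0+\sum_k\la_kp_k\in\V_r$, Fubini (everything is bounded because $\mu$ has compact support and $K$ is compact) gives
\[
 \int p\,d\rho=\int\Bigl(p_0(x)+\sum_k p_k(x)\int\la_k\,d\pi_x\Bigr)d\mu(x)=\int p(x,\tau(x))\,d\mu(x)=\mathcal L_{y,z}(p).
\]
Hence $\rho$ represents $(y,z)$.

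\emph{Necessity.} Let $\rho$ be a probability measure on $K$ representing $(y,z)$. Since $K$ is compact by restricted Archimedeanity, $\rho$ is compactly supported.
\begin{enumerate}[label=(\alph*)]
\item Let $\nu$ be the $x$-marginal of $\rho$. By the disintegration theorem on the Polish space $\R^n\times\R^r$, write $\rho(dx,d\la)=\nu(dx)\,\pi_x(d\la)$ with a Borel probability kernel $\pi_x$.
\item Because $\rho(K)=1$, we have $\pi_x(\Lambda(x))=1$ for $\nu$-a.e.\ $x$, since $K$ is the union of the fibers $\{x\}\times\Lambda(x)$. As $\Lambda(x)$ is closed, this gives $\supp\pi_x\subseteq\Lambda(x)$.
\item Set $\widetilde\tau(x)\coloneqq\int\la\,d\pi_x$, which is Borel and bounded. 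Since $\Lambda(x)$ is convex and closed and $\pi_x$ is concentrated on it with compact support, $\widetilde\tau(x)\in\Lambda(x)$ $\nu$-a.e. The justification is that the barycenter of a compactly supported probability measure lies in the closed convex hull of its support, by Hahn--Banach in $\R^r$.
\item Consequently the pair $(\nu,\widetilde\tau)$ is graph data for $\mathcal L_{y,z}$ in the sense of \eqref{eq:graph}: for all $p\in\V_r$, $\int p\,d\rho=\int p(x,\widetilde\tau(x))\,d\nu(x)$, again by Fubini.
\item The uniqueness clause of Lemma~\ref{thm:graph} then yields $\nu=\mu$ and $\widetilde\tau=\tau$ $\mu$-a.e. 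This gives \eqref{eq:disintegration} and \eqref{eq:kernel-conditions}.
\end{enumerate}
One can also see this step directly: $\nu$ and $\mu$ have the same moments $y_\alpha$ and compact support, so they coincide. Then $\int x^\alpha(\widetilde\tau_k-\tau_k)\,d\mu=0$ for all $\alpha$, and density of polynomials in $L^2(\mu)$ forces $\widetilde\tau_k=\tau_k$ a.e.

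The final sentence of the statement is a restatement of \eqref{eq:kernel-conditions}. The only delicate point is measure-theoretic bookkeeping in steps (a)--(c): the kernel must be Borel, and the set where $\pi_x(\Lambda(x))<1$ must be measurable. For the latter I would use that $\{(x,\la):\la\in\Lambda(x)\}=K$ is closed, so $x\mapsto\pi_x(\Lambda(x))=\int 1_K(x,\la)\,\pi_x(d\la)$ is Borel by the standard properties of kernels.
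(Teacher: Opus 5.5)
Your proposal is correct and follows essentially the same route as the paper: you disintegrate $\rho$ over its $x$-marginal, read off $\supp\pi_x\subseteq\Lambda(x)$ from $\rho(K)=1$, and use convexity of $\Lambda(x)$ to place the barycenter in the fiber; you then identify the marginal with $\mu$ by moment determinacy for compactly supported measures, identify $\widetilde\tau$ with $\tau$ by density of polynomials in $L^2(\mu)$, and get the converse by integrating against the kernel. Invoking the uniqueness clause of Lemma~\ref{thm:graph} for the identification step is only a cosmetic difference, and the direct alternative you sketch is exactly the paper's argument.
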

\begin{proof}
Let $\rho$ be a representing measure.  Restricted Archimedeanity makes $K$
compact.  Disintegration over the $x$-marginal therefore gives
\eqref{eq:disintegration} with a measurable probability kernel $\pi_x$.
The $y$-moments determine the compactly supported $x$-marginal uniquely by
the Stone--Weierstrass theorem, so this marginal is $\mu$.  Put
$\widetilde\tau(x)\coloneqq \int\la\,d\pi_x(\la)$.  The support condition gives
$\supp\pi_x\subseteq \Lambda(x)$.  Since $\Lambda(x)$ is convex,
$\widetilde\tau(x)\in \Lambda(x)$ almost everywhere.  Equality of the $z$-moments
implies
\[
 \int x^\alpha(\widetilde\tau_k-\tau_k)\,d\mu=0
 \qquad(\alpha\in\N^n).
\]
Polynomial density in $L^2(\mu)$ yields
$\widetilde\tau_k=\tau_k$ almost everywhere.  Conversely, any measurable
kernel satisfying \eqref{eq:kernel-conditions} reproduces
\eqref{eq:full-graph-moments} after integration.
\end{proof}

Therefore, replacing $\pi_x$ by $\delta_{\tau(x)}$ preserves feasibility and
all prescribed restricted moments. The remaining freedom lies in the higher
conditional moments of $\la$.

%% file: sections/05_hierarchy.tex
\section{A restricted moment-SOS hierarchy}\label{sec:hierarchy}
Throughout this section, assume $K\neq\varnothing$ and consider the polynomial optimization problem
\begin{equation}\label{eq:opt}
f_{\min}\coloneqq \min_{(x,\la)\in K}f(x,\la).
\end{equation}
The restricted-multiplier Positivstellensatz developed above leads naturally to moment-SOS relaxations tailored to \eqref{eq:opt} with smaller SDP sizes.

For
$p=p_0+\sum_k\la_kp_k\in\V_r$, define
\[
 \deg_xp\coloneqq \max_{0\leq k\leq r}\deg p_k.
\]
Set $g_0\coloneqq 1$ and $d_i\coloneqq \lceil \deg_xg_i/2\rceil$ for $i=0,\ldots,m$. For any relaxation order
\[
 d\geq d_{\min}\coloneqq 
 \max\{\lceil\deg_x f/2\rceil,d_1,\ldots,d_m\},
\]
define the truncated restricted quadratic module
\begin{equation}\label{eq:xdegree-cone}
 \M_{x,d}(g)\coloneqq 
 \left\{\sigma_0+\sum_{i=1}^m\sigma_i g_i:
 \begin{array}{l}
 \sigma_0\in\Sigma[x]_{\leq2d},\\[-1mm]
 \sigma_i\in\Sigma[x]_{\leq2(d-d_i)}
 \end{array}\right\},
\end{equation}
and let
\begin{equation}
f_d\coloneqq \sup\{\gamma\in\R:f-\gamma\in\M_{x,d}(g)\}.
 \label{eq:soshierarchy}
\end{equation}

\begin{theorem}\label{thm:convergence}
The bounds satisfy $f_d\leq f_{d+1}\leq f_{\min}$.
Moreover, if $\M_x(g)$ is restricted Archimedean, then $\lim_{d\to\infty}f_d=f_{\min}$.
\end{theorem}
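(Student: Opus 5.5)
Monotonicity follows from nesting of the truncated cones. Convergence is a direct application of Theorem~\ref{thm:main} to $f-f_{\min}+\eps$.

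\emph{Monotonicity.} For $d\geq d_{\min}$ we have $\Sigma[x]_{\leq 2(d-d_i)}\subseteq\Sigma[x]_{\leq2(d+1-d_i)}$ for every $i=0,\ldots,m$. Hence $\M_{x,d}(g)\subseteq\M_{x,d+1}(g)$. Every $\gamma$ feasible in \eqref{eq:soshierarchy} at order $d$ is therefore feasible at order $d+1$, which gives $f_d\leq f_{d+1}$.

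\emph{Upper bound by $f_{\min}$.} Suppose $f-\gamma=\sigma_0+\sum_i\sigma_ig_i$ with $\sigma_i\in\sos$. Evaluating at any $(x,\la)\in K$ gives $f(x,\la)-\gamma\geq0$, because $\sigma_i(x)\geq0$ and $g_i(x,\la)\geq0$. Since $K\neq\varnothing$, every feasible $\gamma$ satisfies $\gamma\leq f_{\min}$, so $f_d\leq f_{\min}$. If no $\gamma$ is feasible, then $f_d=-\infty$ and the inequality holds trivially.

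\emph{Convergence.} Assume restricted Archimedeanity. Then $K$ is compact, as noted after Definition~\ref{def:arch}, and nonempty by hypothesis, so $f_{\min}$ is attained and finite. Fix $\eps>0$. The polynomial $f-f_{\min}+\eps$ lies in $\V_r$ and satisfies $f-f_{\min}+\eps\geq\eps>0$ on $K$. Theorem~\ref{thm:main} gives
\[
 f-(f_{\min}-\eps)=\sigma_0+\sum_{i=1}^m\sigma_ig_i,\qquad \sigma_i\in\sos.
\]
This certificate involves only finitely many SOS polynomials in $x$. Choose $d\geq d_{\min}$ with $2d\geq\deg\sigma_0$ and $2(d-d_i)\geq\deg\sigma_i$ for all $i$. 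Then $f-(f_{\min}-\eps)\in\M_{x,d}(g)$, so $f_d\geq f_{\min}-\eps$. Combined with monotonicity and the upper bound, $f_{d'}\in[f_{\min}-\eps,f_{\min}]$ for all $d'\geq d$. Letting $\eps\downarrow0$ gives $\lim_{d\to\infty}f_d=f_{\min}$.

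The main potential obstacle is the degree bookkeeping: each $\sigma_i$ must fit the truncation $\Sigma[x]_{\leq2(d-d_i)}$ as defined through the $x$-degree. This is harmless, because the truncation constrains only the degree of the multipliers $\sigma_i$ in $x$, not the degree of the products $\sigma_ig_i$. There is no cancellation issue to worry about, since membership in $\M_{x,d}(g)$ is defined by the existence of multipliers of bounded degree. Taking $d$ large enough therefore works directly.
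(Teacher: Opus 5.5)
Your proposal is correct and follows the paper's own proof essentially verbatim. Monotonicity comes from nestedness of the truncated cones, the upper bound from evaluation on $K$, and convergence from Theorem~\ref{thm:main} applied to $f-f_{\min}+\eps$, whose finite-degree certificate lies in $\M_{x,d}(g)$ for all large $d$. Your degree bookkeeping and the remark that compactness makes $f_{\min}$ finite fill in details the paper leaves implicit.
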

\begin{proof}
Nestedness gives monotonicity, and evaluation on $K$ gives the upper bound.
For $\eps>0$, the polynomial $f-f_{\min}+\eps$ is strictly positive on $K$.
Theorem~\ref{thm:main} supplies a restricted certificate of finite degree, which belongs
to $\M_{x,d}(g)$ for all sufficiently large $d$.  Hence
$f_d\geq f_{\min}-\eps$ eventually.
\end{proof}

% Writing
% $\sigma_i=[x]_{d-d_i}^\intercalQ_i[x]_{d-d_i}$ with $Q_i\succeq0$ and matching
% coefficients of $1,\la_1,\ldots,\la_r$ gives
% \begin{subequations}\label{eq:matching}
% \begin{align}
%  f_0-\gamma&=\sigma_0+\sum_{i=1}^m\sigma_i g_{i0},\label{eq:matching0}\\
%  f_k&=\sum_{i=1}^m\sigma_i g_{ik},\qquad k\in[r].
%  \label{eq:matchingk}
% \end{align}
% \end{subequations}
% These are linear equations in $\gamma$ and the Gram entries.  No positive
% semidefinite block is indexed by parameter monomials.

To simplify notation, write $y^0\coloneqq y$ and
$y^k\coloneqq z^{(k)}$, $k\in[r]$ for the pseudo-moment sequences. Set $\mathbf y\coloneqq(y^0,\ldots,y^r)$.
For $k=0,\ldots,r$, define the $d$-th order moment matrix
\begin{equation}\label{eq:momentmatrix}
 M_d(y^k)\coloneqq (y^k_{\alpha+\beta})_{|\alpha|,|\beta|\leq d}.
\end{equation}
Let $g_{ik}(x)=\sum_\gamma g_{ik,\gamma}x^\gamma$, and define the restricted localizing matrix
\begin{equation}\label{eq:localmatrix}
 M_{d-d_i}(g_i\mathbf y)\coloneqq 
 \left(\sum_{k=0}^r\sum_\gamma g_{ik,\gamma}
        y^k_{\alpha+\beta+\gamma}\right)_{|\alpha|,|\beta|\leq d-d_i}.
\end{equation}
The conic dual of \eqref{eq:soshierarchy} reads
\begin{equation}\label{eq:momentdual}
f_d^*\coloneqq\inf_\mathbf y\left\{\mathcal L_\mathbf y(f):y^0_0=1,\ M_d(y^0)\succeq0,
       \ M_{d-d_i}(g_i\mathbf y)\succeq0\ (i\in[m])\right\}.
\end{equation}

\begin{remark}
The largest PSD block in the restricted relaxation
\eqref{eq:soshierarchy}--\eqref{eq:momentdual} has size
$\binom{n+d}{d}$, because its Gram and moment matrices are indexed only by
monomials in the principal variables. In contrast, the corresponding block
in the standard joint-variable relaxation has size
$\binom{n+r+d}{d}$, since monomials in both $x$ and $\la$ are included. The
resulting reduction becomes increasingly pronounced as either the number of
parameters $r$ or the relaxation order $d$ grows.
\end{remark}

% \begin{proposition}\label{prop:dualconvergence}
% For every admissible $d$,
% \[
% f_d\leq\gamma_d\leq f_{\min}.
% \]
% \end{proposition}
% \begin{proof}
% The first inequality is weak conic duality.  Evaluation at a minimizer
% $(x^*,\la^*)$ gives a feasible sequence
% $y^0_\alpha=(x^*)^\alpha$ and
% $y^k_\alpha=\la_k^*(x^*)^\alpha$ of value $f_{\min}$.
% Truncating a feasible order-$(d+1)$ sequence gives an order-$d$ feasible
% sequence, hence $\gamma_d\leq\gamma_{d+1}$.  Theorem~\ref{thm:convergence} and
% the two-sided inequalities prove convergence.
% \end{proof}

% \begin{proposition}\label{prop:slater}
% Suppose the SOS relaxation is feasible and has finite optimal value.  If
% \eqref{eq:momentdual} has a feasible point at which every displayed moment
% and localizing matrix is positive definite, then $f_d=\gamma_d$, and the
% SOS supremum is attained.
% \end{proposition}
% \begin{proof}
% This is finite-dimensional semidefinite programming duality under strict
% feasibility of the dual \cite{VandenbergheBoyd1996}.
% \end{proof}
% Restricted Archimedeanity alone does not imply strict feasibility; lower-dimensional feasible sets and redundant constraints may prevent it.

We state a sufficient, directly checkable exactness criterion for
\eqref{eq:momentdual}.  It combines flatness of the ordinary moment matrix with domination conditions for the mixed moment matrices.

\begin{theorem}[Global optimality]\label{thm:flat-extraction}
Let $\mathbf y$ be an optimal solution of \eqref{eq:momentdual}.  Suppose, in addition, that for some $C_k>0$,
\begin{equation}\label{eq:parameter-domination}
 -C_kM_d(y^0)\preceq M_d(y^k)\preceq C_kM_d(y^0),
 \qquad k\in[r].
\end{equation}
Let $d_K\coloneqq \max\{1,d_1,\ldots,d_m\}$. If
\begin{equation}\label{eq:flatness}
 \rank M_d(y^0)=\rank M_{d-d_K}(y^0)=s,
\end{equation}
then $f_d^*=f_{\min}$. Moreover, there exist distinct points
$(x^{(j)},\la^{(j)})\in K$ and positive weights $w_j$,
$j=1,\ldots,s$, with $\sum_jw_j=1$, such that the finitely atomic measure $\rho=\sum_{j=1}^sw_j\delta_{(x^{(j)},\la^{(j)})}$ represents $\mathbf y$ and each $(x^{(j)},\la^{(j)})$ is a global minimizer of \eqref{eq:opt}.
\end{theorem}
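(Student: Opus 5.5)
Our plan is to combine the Curto--Fialkow flat extension theorem for $y^0$ with a finite-dimensional version of the operator argument of Lemma~\ref{lem:parameter}, which will identify the mixed moments $y^k$ with evaluations of the atoms of $y^0$ weighted by parameter values.

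\emph{Step 1: atoms in $x$.} Since $M_d(y^0)\succeq0$ and the flatness condition \eqref{eq:flatness} holds with $d_K\geq1$, the flat extension theorem of Curto and Fialkow gives distinct points $x^{(1)},\ldots,x^{(s)}\in\R^n$ and weights $w_j>0$ with $\sum_jw_j=y^0_0=1$, such that $y^0_\alpha=\sum_jw_j(x^{(j)})^\alpha$ for $|\alpha|\leq2d$. Let $V\coloneqq ((x^{(j)})^\alpha)_{|\alpha|\leq d,\,j}$ be the Vandermonde-type matrix. Then $M_d(y^0)=VWV^\intercal$ with $W=\mathrm{diag}(w_j)$, and $V$ has rank $s$. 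Flatness already holds at order $d-d_K\geq d-1$, so the columns of $V$ remain linearly independent when restricted to rows with $|\alpha|\leq d-1$. Equivalently, for each $j$ there is an interpolation polynomial $\ell_j$ of degree at most $d-d_K$ with $\ell_j(x^{(i)})=\delta_{ij}$.

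\emph{Step 2: mixed moments.} The domination condition \eqref{eq:parameter-domination} implies $\ker M_d(y^0)\subseteq\ker M_d(y^k)$, because $q^\intercal M_d(y^0)q=0$ forces $q^\intercal M_d(y^k)q=0$ by the two-sided bound. Since $M_d(y^k)$ is symmetric, its range then lies in $\mathrm{range}(V)$, so $M_d(y^k)=VT_kV^\intercal$ for a symmetric $s\times s$ matrix $T_k$.

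The main obstacle is to show that $T_k$ is diagonal, i.e.\ that the finite analogue of ``$T$ commutes with $X_j$'' holds. We will exploit the Hankel structure: the entry of $M_d(y^k)$ at $(\alpha+e_i,\beta)$ equals the entry at $(\alpha,\beta+e_i)$ whenever both indices are in range. Pairing with the coefficient vectors of $\ell_a\cdot x_i$ and $\ell_b$, which have degree at most $d$ because $\deg\ell\leq d-1$, gives
\[
 x^{(a)}_i\,(T_k)_{ab}=x^{(b)}_i\,(T_k)_{ab}\qquad\text{for all }i.
\]
The points are distinct, so $(T_k)_{ab}=0$ for $a\neq b$. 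Writing $(T_k)_{jj}=w_j\la^{(j)}_k$ defines $\la^{(j)}$. This yields $y^k_{\alpha}=\sum_jw_j\la^{(j)}_k(x^{(j)})^\alpha$ for all $\alpha$ that can be written as $\alpha'+\beta$ with $|\alpha'|,|\beta|\leq d$, that is, for $|\alpha|\leq2d$. Hence $\rho=\sum_jw_j\delta_{(x^{(j)},\la^{(j)})}$ reproduces every moment that enters $\mathcal L_{\mathbf y}$.

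\emph{Step 3: feasibility and optimality.} Fix $j$ and test the localizing matrix $M_{d-d_i}(g_i\mathbf y)\succeq0$ with the coefficient vector of $\ell_j$, which has degree at most $d-d_K\leq d-d_i$. By Step 2 the quadratic form equals $\sum_lw_l\ell_j(x^{(l)})^2g_i(x^{(l)},\la^{(l)})=w_jg_i(x^{(j)},\la^{(j)})$, and this is nonnegative. Therefore each $(x^{(j)},\la^{(j)})$ lies in $K$. Then
\[
 f_d^*=\mathcal L_{\mathbf y}(f)=\sum_jw_jf(x^{(j)},\la^{(j)})\geq f_{\min}.
\]
Conversely, $f_d^*\leq f_{\min}$ because the Dirac moments of any minimizer are feasible for \eqref{eq:momentdual}. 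Hence $f_d^*=f_{\min}$. Since the weights are positive and each $f(x^{(j)},\la^{(j)})\geq f_{\min}$, every atom must attain $f_{\min}$, so each is a global minimizer. The pairs $(x^{(j)},\la^{(j)})$ are distinct because their $x$-components are.
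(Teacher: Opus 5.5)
Your proposal is correct and follows the paper's proof essentially step for step: Curto--Fialkow for $y^0$, the kernel inclusion from \eqref{eq:parameter-domination} to factor $M_d(y^k)$ through the flat quotient, and the Hankel identity. Your relation $x^{(a)}_i(T_k)_{ab}=x^{(b)}_i(T_k)_{ab}$ is exactly the paper's commutation $T_kX_i=X_iT_k$ written in the atomic basis, and the remaining steps (interpolation polynomials of degree at most $d-d_K$ for feasibility, then the averaging argument for optimality) also match. One small slip: since $d_K\geq1$ you have $d-d_K\leq d-1$ (not $\geq$), so the column independence of $V$ should be stated on the rows $|\alpha|\leq d-d_K$, which is what actually yields the $\ell_j$ you use.
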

\begin{proof}
The flat-extension theorem for the ordinary moment matrix
\cite{CurtoFialkow1998} gives distinct points $x^{(1)},\ldots,x^{(s)}$ and
positive weights $w_j$ such that $\mu=\sum_{j=1}^sw_j\delta_{x^{(j)}}$ represents $y^0$.  Let
$\mathcal H$ be the finite-dimensional flat GNS space.  It is spanned by
polynomials of degree at most $d-d_K$, and multiplication by the coordinate
functions gives commuting self-adjoint operators $X_1,\ldots,X_n$ whose joint
eigenvalues are the recovered points.

For each $k$, the domination condition \eqref{eq:parameter-domination} implies
$\ker M_d(y^0)\subseteq\ker M_d(y^k)$.  Hence
\[
 \ip{T_k[p]}{[q]}\coloneqq \mathcal L_{y^k}(pq),\qquad p,q\in\R[x]_{\leq d},
\]
defines a self-adjoint operator $T_k$ on $\mathcal H$ with
$-C_kI\preceq T_k\preceq C_kI$. By flatness, every vector in $\mathcal H$
has a representative of degree at most $d-d_K$. Choose such representatives
$p,q$. Since $d_K\geq1$, both $x_i p$ and $x_i q$ have degree at most
$d$, so all expressions below involve available moments. Using the Hankel
identity, which allows the factor $x_i$ to be moved between the two
arguments, and the self-adjointness of $X_i$, we obtain
\begin{align*}
 \ip{T_kX_i[p]}{[q]}
 &=\mathcal L_{y^k}((x_i p)q)
  =\mathcal L_{y^k}(p(x_i q))\\
 &=\ip{T_k[p]}{X_i[q]}
  =\ip{X_i T_k[p]}{[q]}.
\end{align*}
Since the classes of these low-degree polynomials span $\mathcal H$, this
proves $T_kX_i=X_i T_k$ for every $i$. The joint eigenspaces of the
$X_i$ are the one-dimensional
atomic spaces.  Thus $T_k$ is diagonal in the atomic basis; write its value on
the $j$th space as $\la_k^{(j)}$. To recover the mixed moments, fix
$\alpha\in\N^n$ with $|\alpha|\leq2d$ and choose multi-indices
$\beta,\gamma$ such that
\[
 \alpha=\beta+\gamma,\qquad |\beta|\leq d,\qquad |\gamma|\leq d.
\]
Then, by the definition of $T_k$ and the atomic representation of the inner
product,
\begin{align*}
 y^k_\alpha
 &=\mathcal L_{y^k}(x^\alpha)
  =\mathcal L_{y^k}(x^\beta x^\gamma)
  =\ip{T_k[x^\beta]}{[x^\gamma]}\\
 &=\sum_{j=1}^s w_j\la_k^{(j)}
      (x^{(j)})^\beta(x^{(j)})^\gamma
  =\sum_{j=1}^s w_j\la_k^{(j)}(x^{(j)})^\alpha.
\end{align*}

The rank equality implies that the evaluation map from
$\R[x]_{\leq d-d_K}$ onto the functions on the $s$ points is surjective.
Choose interpolation polynomials $\ell_j$ in this space with
$\ell_j(x^{(q)})=\delta_{jq}$.  Since
$d-d_K\leq d-d_i$, the $i$th localizing constraint gives
\[
 0\leq \mathcal L_\mathbf y(\ell_j^2g_i)
   =w_jg_i(x^{(j)},\la^{(j)}).
\]
Therefore every recovered pair belongs to $K$.

The objective value is the integral of $f$ against the finitely atomic measure $\rho$.  It is therefore at least $f_{\min}$.
Evaluation at a global minimizer gives the reverse inequality for every
moment relaxation.  Hence equality holds.  A positive weighted average of
numbers bounded below by $f_{\min}$ can equal $f_{\min}$ only if every term with
positive weight equals $f_{\min}$.
\end{proof}

\begin{remark}
The domination conditions \eqref{eq:parameter-domination} are among the localizing constraints when
$C_k\pm\la_k$ are included as generators.  
\end{remark}

%% file: sections/06_extensions.tex
\section{Extensions to polynomial-matrix settings}\label{sec:extensions}
We next extend the scalar theory to polynomial matrices. Scalar objectives
retain the graph representation under matrix constraints. For matrix-valued
objectives, one parameter admits a commutative representation, whereas
several parameters require a stronger positivity hypothesis in general.

\subsection{Scalar objectives with polynomial-matrix constraints}
For $i\in[m]$, let
\[
 G_i(x,\la)=G_{i0}(x)+\sum_{k=1}^r\la_kG_{ik}(x)
 \in\Sym^{q_i}[x,\la].
\]
Write $\Sigma^{q}[x]$ for the cone of SOS polynomial matrices of size $q$.
Thus $S\in\Sigma^q[x]$ if $S=P^{\intercal}P$ for some polynomial matrix $P$ with
$q$ columns.  Define
\begin{align}
 \Q_x(G)&\coloneqq \left\{\sigma_0+\sum_{i=1}^m\ip{S_i}{G_i}:
       \sigma_0\in\Sigma[x],\ S_i\in\Sigma^{q_i}[x]\right\},
 \label{eq:matrixcone}\\
 K_G&\coloneqq \{(x,\la):G_i(x,\la)\succeq0,\ i\in[m]\}.
\end{align}
Here $\ip{S_i}{G_i}=\tr(S_iG_i)$ is scalar and affine in $\la$.

\begin{theorem}\label{thm:matrixconstraints}
Suppose that $1$ is an order unit for $\Q_x(G)\subseteq\V_r$.  If a scalar
$f\in\V_r$ is strictly positive on $K_G$, then
\begin{equation}\label{eq:matrixcert}
 f=\sigma_0+\sum_{i=1}^m\ip{S_i(x)}{G_i(x,\la)},
 \qquad \sigma_0\in\Sigma[x],\quad S_i\in\Sigma^{q_i}[x].
\end{equation}
The order-unit condition is equivalent to the ball and parameter bounds in
\eqref{eq:bounds}, with $\Q_x(G)$ in place of $\M_x(g)$.
\end{theorem}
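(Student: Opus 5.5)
The proof follows the scalar argument of Theorem~\ref{thm:main}, with the constraint cone $\M_x(g)$ replaced by $\Q_x(G)$. We first check that $\Q_x(G)$ satisfies the hypotheses of Lemma~\ref{lem:bounds}. It is a convex cone in $\V_r$ and contains $1$. It is stable under multiplication by squares $a^2$ with $a\in\A$: indeed $a^2S_i=(aP)^\intercal(aP)\in\Sigma^{q_i}[x]$ whenever $S_i=P^\intercal P$. Lemma~\ref{lem:bounds} then yields the stated equivalence between the order-unit condition and the bounds \eqref{eq:bounds} for $\Q_x(G)$.

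For the positivity statement, suppose $f\notin\Q_x(G)$. Lemma~\ref{lem:sep} gives a linear functional $\mathcal L$ on $\V_r$ with $\mathcal L(1)=1$, $\mathcal L(\Q_x(G))\subseteq[0,\infty)$ and $\mathcal L(f)\le0$. The proofs of Lemma~\ref{lem:measure} and Lemma~\ref{lem:parameter} use only three facts: positivity on $\Sigma[x]$, and on $a^2(R-\norm{x}^2)$ and $a^2(C_k\pm\la_k)$. All of these elements lie in $\Q_x(G)$. The argument of Lemma~\ref{thm:graph} therefore produces a compactly supported probability measure $\mu$ and a bounded Borel map $\tau$ such that
\[
\mathcal L(p)=\int p(x,\tau(x))\,d\mu(x)\qquad(p\in\V_r).
\]

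The one new step is localization, that is, showing $G_i(x,\tau(x))\succeq0$ for $\mu$-almost every $x$. Take $S=vv^\intercal$ with $v=a(x)e$, where $e\in\R^{q_i}$ is fixed and $a\in\A$. Positivity of $\mathcal L$ on $\ip{S}{G_i}$ gives
\[
\int a^2\,e^\intercal G_i(x,\tau(x))e\,d\mu\ge0 .
\]
The integrand $h_e(x)=e^\intercal G_i(x,\tau(x))e$ is bounded and Borel, so the density argument of Lemma~\ref{lem:localization} gives $h_e\ge0$ almost everywhere. We apply this to all $e$ in a countable dense subset of $\R^{q_i}$, for instance $\mathbb Q^{q_i}$. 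The union of the exceptional null sets is still null. Outside it, continuity of $e\mapsto h_e(x)$ gives $G_i(x,\tau(x))\succeq0$. Taking the finite union over $i$, we conclude that $(x,\tau(x))\in K_G$ for $\mu$-almost every $x$. This countable-density step is the only real obstacle, and it is mild.

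To finish, note first that $K_G\neq\varnothing$, since $\mu$ has mass one. Restricted Archimedeanity makes $K_G$ bounded (apply the bounds to the coordinate functions), and $K_G$ is closed, so it is compact. Strict positivity of $f$ then gives $f\ge\delta>0$ on $K_G$. Hence
\[
\mathcal L(f)=\int f(x,\tau(x))\,d\mu\ge\delta,
\]
contradicting $\mathcal L(f)\le0$. Therefore $f\in\Q_x(G)$, which is exactly the certificate \eqref{eq:matrixcert}.
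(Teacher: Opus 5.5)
Your proposal is correct and follows essentially the same route as the paper. Both arguments check the hypotheses of Lemma~\ref{lem:bounds} for $\Q_x(G)$, separate, obtain the graph representation $(\mu,\tau)$ from the argument of Lemma~\ref{thm:graph}, localize via the rank-one multipliers $(au)(au)^{\intercal}$ with $u\in\mathbb Q^{q_i}$, and conclude as in Theorem~\ref{thm:main}; your explicit checks of square-stability, nonemptiness and compactness of $K_G$ just spell out steps the paper leaves implicit.
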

\begin{proof}
Suppose $f\notin\Q_x(G)$. By Lemma~\ref{lem:sep}, choose a normalized separating functional
$\mathcal L$. The cone $\Q_x(G)$ is convex, contains $1$,
and is stable under multiplication by squares in $x$. Lemma~\ref{lem:bounds}
and the representation argument in Lemma~\ref{thm:graph} therefore give a
compactly supported probability measure $\mu$ and a bounded measurable map
$\tau$ representing $\mathcal L$.
For a constant vector $u\in\mathbb Q^{q_i}$ and $a\in\R[x]$,
\[
 a^2u^{\intercal}G_i u=\ip{(au)(au)^{\intercal}}{G_i}\in\Q_x(G).
\]
For fixed $u$, set $h_u(x)\coloneqq u^{\intercal}G_i(x,\tau(x))u$. Positivity of the separating
functional gives
\[
 \int a(x)^2h_u(x)\,d\mu(x)\geq0
 \qquad\text{for every }a\in\R[x].
\]
Polynomial density in $L^2(\mu)$,
as in Lemma~\ref{lem:localization}, implies
$h_u(x)\geq0$ almost everywhere. Taking the union of the null
sets over the countable set $\mathbb Q^{q_i}$ and using density gives
$G_i(x,\tau(x))\succeq0$ almost everywhere.  Separation now contradicts
strict positivity of $f$, exactly as in Theorem~\ref{thm:main}.
\end{proof}

% The corresponding SOS hierarchy replaces each $S_i$ by an SOS-matrix Gram
% representation in the monomial basis of $x$.  Its dual localizing block is
% indexed by pairs consisting of a matrix coordinate and an $x$-monomial; no
% parameter monomial enters the block index.  The proof of
% Theorem~\ref{thm:convergence} therefore gives asymptotic convergence for a
% scalar objective over $K_G$.

\subsection{Matrix objectives and one affine parameter}
Let
\[
 F(x,\la)=F_0(x)+\sum_{k=1}^r\la_kF_k(x)
 \in\Sym^p[x,\la].
\]
For the matrix constraints above, define the restricted matrix quadratic module
\begin{equation}\label{eq:lifted-matrix-module}
 \mathfrak M_x^p(G)\coloneqq 
 \left\{S_0+\sum_{i=1}^m\sum_\nu
 H_{i\nu}^{\intercal}G_iH_{i\nu}:
 \begin{array}{l}
 S_0\in\Sigma^p[x],\\[-1mm]
 H_{i\nu}\in\R[x]^{q_i\times p}
 \end{array}\right\}.
\end{equation}
For scalar constraints $g_i$, this reduces to
$\Sigma^p[x]+\sum_i g_i\Sigma^p[x]$.
We say that $\mathfrak M_x^p(G)$ is restricted Archimedean if $I_p$ is an order unit in
$\Sym^p\otimes\V_r$.

We first record the amplified GNS construction used in both the one- and
several-parameter arguments.  If
$P=P_0+\sum_k\la_kP_k\in\Sym^s\otimes\V_r$, write
\[
 P(X,T)\coloneqq
 \left(P_{ab,0}(X)+\sum_{k=1}^rP_{ab,k}(X)T_k\right)_{a,b=1}^s,
\]
where $P_{ab}=P_{ab,0}+\sum_k\la_kP_{ab,k}$.  This is well defined whenever
every $T_k$ commutes with the operators $X_j$; no commutativity among the
$T_k$ is required because $P$ is affine in $\la$.

\begin{lemma}[Amplified GNS and localization]\label{lem:matrix-gns}
Suppose that $\mathfrak M_x^p(G)$ is restricted Archimedean and that
$\mathcal L:\Sym^p\otimes\V_r\to\R$ satisfies
\[
 \mathcal L(I_p)=1,\qquad
 \mathcal L(\mathfrak M_x^p(G))\subseteq[0,\infty).
\]
Then there is a separable real Hilbert space $\mathcal H$, commuting bounded
self-adjoint operators $X_1,\ldots,X_n$, bounded self-adjoint operators
$T_1,\ldots,T_r$ commuting with every $X_j$, and vectors
$\xi_1,\ldots,\xi_p\in\mathcal H$ such that
\begin{align}
 \mathcal L(P)
 &=\sum_{a,b=1}^p
   \ip{P_{ab}(X,T)\xi_b}{\xi_a}
 &&(P\in\Sym^p\otimes\V_r),
 \label{eq:matrix-gns-functional}\\
 \sum_{a=1}^p\norm{\xi_a}^2&=1,
 \label{eq:matrix-gns-normalization}\\
 G_i(X,T)&\succeq0\quad\text{on }\mathcal H^{q_i}
 &&(i\in[m]).
 \label{eq:matrix-gns-localization}
\end{align}
Moreover, certified bounds
$RI_p-\norm{x}^2I_p\in\mathfrak M_x^p(G)$ and
$C_kI_p\pm\la_kI_p\in\mathfrak M_x^p(G)$ imply
$\sum_jX_j^2\preceq RI$ and $-C_kI\preceq T_k\preceq C_kI$.
\end{lemma}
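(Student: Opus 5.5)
We follow the scalar argument of Lemmas~\ref{lem:measure} and~\ref{lem:parameter}, replacing $\A$ by the free module $\A^p$. Define a symmetric bilinear form on $\A^p$ (column vectors of polynomials) by
\[
 \ip{u}{v}\coloneqq \mathcal L\bigl(\tfrac12(uv^{\intercal}+vu^{\intercal})\bigr),
\]
where $uv^{\intercal}$ is viewed in $\Sym^p\otimes\V_r$ after symmetrization. Since $uu^{\intercal}\in\Sigma^p[x]\subseteq\mathfrak M_x^p(G)$, this form is positive semidefinite. We quotient by its null space $\mathcal N$, which is a subspace by Cauchy--Schwarz, and complete to obtain a real Hilbert space $\mathcal H$. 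Separability follows because $\A^p$ has a countable spanning set. We then set $\xi_a\coloneqq [e_a]$, where $e_a$ is the $a$th standard basis vector of $\A^p$. Normalization \eqref{eq:matrix-gns-normalization} is then $\sum_a\mathcal L(e_ae_a^{\intercal})=\mathcal L(I_p)=1$.

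Next we build the operators. For $X_j$, the ball bound gives $\mathcal L\bigl(u^{\intercal}(R-\norm{x}^2)u\bigr)\geq0$; more precisely, $H^{\intercal}(RI_p-\norm{x}^2I_p)H\in\mathfrak M_x^p(G)$ for the square multiplier $H$ built from $u$. Using the functional on the outer product $uu^{\intercal}$, this yields $\sum_j\norm{[x_ju]}^2\leq R\norm{[u]}^2$. Therefore multiplication by $x_j$ preserves $\mathcal N$ and extends to a bounded self-adjoint operator with $\sum_jX_j^2\preceq RI$. The operators commute on the dense subspace, hence everywhere. If no explicit bound is given, Lemma~\ref{lem:bounds}, adapted to the matrix module, provides one. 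For $T_k$, we define the bilinear form $B_k([u],[v])\coloneqq\mathcal L\bigl(\la_k\cdot\tfrac12(uv^{\intercal}+vu^{\intercal})\bigr)$. The bounds $C_kI_p\pm\la_kI_p\in\mathfrak M_x^p(G)$, compressed by the multiplier $u$, give $|B_k([u],[u])|\leq C_k\norm{[u]}^2$. Polarization then shows $B_k$ is well defined and bounded, so Riesz representation yields a self-adjoint $T_k$ with $-C_kI\preceq T_k\preceq C_kI$. The Hankel identity $B_k(x_ju,v)=B_k(u,x_jv)$ gives $T_kX_j=X_jT_k$ on the dense subspace. Formula \eqref{eq:matrix-gns-functional} follows by linearity. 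For a basis element $P=a(x)\la_k E_{ab}$ (symmetrized), we have $\mathcal L(P)=B_k([ae_b],[e_a])=\ip{T_ka(X)\xi_b}{\xi_a}$, since $[ae_b]=a(X)\xi_b$.

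Finally we prove localization \eqref{eq:matrix-gns-localization}. Take a vector $v=(v_1,\dots,v_{q_i})\in\mathcal H^{q_i}$ whose entries are classes $[w_\ell]$ of polynomial vectors $w_\ell\in\A^p$. Form the matrix $H\in\R[x]^{q_i\times p}$ whose $\ell$th row is $w_\ell^{\intercal}$. Then $H^{\intercal}G_iH\in\mathfrak M_x^p(G)$. The key computation is that pairing with $\mathcal L$ reproduces $\ip{G_i(X,T)v}{v}$. This holds because the entries of $G_i$ are affine in $\la$, and $T_k$ commutes with polynomial multipliers, so each term becomes $\ip{(G_i)_{\ell m,k}(X)T_kv_m}{v_\ell}$. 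Hence $\ip{G_i(X,T)v}{v}\geq0$ on a dense subset of $\mathcal H^{q_i}$. Since $G_i(X,T)$ is bounded, continuity gives $G_i(X,T)\succeq0$.

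The main obstacle I expect is the bookkeeping in the middle step. We must verify that the compressed order-unit bounds really produce the scalar inequalities $|B_k(u,u)|\leq C_k\norm{[u]}^2$ and that the ball bound behaves likewise. This requires identifying $\mathcal L(H^{\intercal}(\cdot)H)$, with $H$ a single-row multiplier, with the form evaluated at $u$. The same identification drives localization, so the effort lies in setting it up cleanly once. Everything else parallels the scalar lemmas.
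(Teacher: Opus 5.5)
Your proposal is correct and follows essentially the same route as the paper. Both build the symmetrized GNS form on $\R[x]^p$, apply a congruence by a polynomial matrix with a row $u^{\intercal}$ to the certified ball and parameter bounds to bound $X_j$ and $T_k$, use the Hankel identity to get $T_kX_j=X_jT_k$, and localize through $\mathcal L(U^{\intercal}G_iU)\geq0$ plus density. The bounds you obtain via an adapted Lemma~\ref{lem:bounds} also come directly from applying the order-unit property to $\norm{x}^2I_p$ and $\la_kI_p$.
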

\begin{proof}
The matrix module $\mathfrak M_x^p(G)$ is closed under polynomial matrix congruence.  Restricted
Archimedeanity therefore supplies numbers $R,C_k>0$ for which the displayed
ball and parameter bounds hold and remain positive after every such
congruence.

On the vector-polynomial space $\mathcal D\coloneqq\R[x]^p$, define
\[
 \ip{u}{v}_0\coloneqq
 \mathcal L\!\left(\frac{uv^{\intercal}+vu^{\intercal}}{2}\right).
\]
This form is PSD because $uu^{\intercal}$ is an SOS polynomial matrix.  Let
$\mathcal N\coloneqq\{u:\ip{u}{u}_0=0\}$ and let $\mathcal H$ be the
Hilbert completion of $\mathcal D/\mathcal N$. Applying a polynomial-matrix congruence to the ball bound
with a polynomial matrix whose first row is $u^{\intercal}$ gives
\[
 \sum_{j=1}^n\norm{[x_ju]}^2\leq R\norm{[u]}^2.
\]
Thus multiplication by $x_j$ descends to a bounded self-adjoint operator
$X_j$ on $\mathcal H$.  These operators commute on the dense polynomial
subspace and hence commute everywhere.

For each $k$, consider the symmetric bilinear form
\[
 B_k([u],[v])\coloneqq
 \mathcal L\!\left(
   \la_k\frac{uv^{\intercal}+vu^{\intercal}}{2}\right).
\]
Applying the same congruence to $C_kI_p\pm\la_kI_p$ gives
$|B_k([u],[u])|\leq C_k\norm{[u]}^2$.  Polarization and rescaling, as in
Lemma~\ref{lem:parameter}, yield
\[
 |B_k([u],[v])|\leq C_k\norm{[u]}\norm{[v]}.
\]
Hence the form descends to the quotient and is represented by a bounded
self-adjoint operator $T_k$ satisfying
$-C_kI\preceq T_k\preceq C_kI$.  For polynomial vectors,
\[
 \ip{T_kX_j[u]}{[v]}
 =B_k([x_ju],[v])
 =B_k([u],[x_jv])
 =\ip{X_jT_k[u]}{[v]},
\]
so $T_kX_j=X_jT_k$.

Let $e_a$ be the $a$th constant coordinate vector and put
$\xi_a\coloneqq[e_a]$.  The definitions of $X$ and $T$ give, for every
scalar polynomial $q$,
\begin{align*}
 \ip{q(X)\xi_b}{\xi_a}
 &=\mathcal L\!\left(
   q\frac{e_be_a^{\intercal}+e_ae_b^{\intercal}}{2}\right),\\
 \ip{q(X)T_k\xi_b}{\xi_a}
 &=\mathcal L\!\left(
   q\la_k\frac{e_be_a^{\intercal}+e_ae_b^{\intercal}}{2}\right).
\end{align*}
Summing over the entries of a symmetric matrix polynomial proves
\eqref{eq:matrix-gns-functional}, and
\eqref{eq:matrix-gns-normalization} follows by taking $P=I_p$.

It remains to prove the amplified constraint inequalities.  Write
$G_i=(g_{i,ab})_{a,b=1}^{q_i}$ and take arbitrary
$u_1,\ldots,u_{q_i}\in\mathcal D$.  Let
$U\in\R[x]^{q_i\times p}$ have $a$th row $u_a^{\intercal}$.  Using the symmetry of
$G_i$ and the preceding identities,
\[
 \sum_{a,b=1}^{q_i}
 \ip{g_{i,ab}(X,T)[u_b]}{[u_a]}
 =\mathcal L(U^{\intercal}G_iU)\geq0.
\]
The last inequality holds because $U^{\intercal}G_iU$ is a generator term in
$\mathfrak M_x^p(G)$.  Since polynomial vectors are dense in
$\mathcal H^{q_i}$, this proves \eqref{eq:matrix-gns-localization}.
\end{proof}

The following representation principle is the matrix analogue of the scalar
graph construction and explains the special role of one parameter.

\begin{lemma}\label{lem:matrix-representation}
Let $r=1$, suppose that $\mathfrak M_x^p(G)$ is restricted Archimedean, and let
$\mathcal L:\Sym^p\otimes\V_1\to\R$ satisfy
\[
 \mathcal L(I_p)=1,\qquad
 \mathcal L(\mathfrak M_x^p(G))\subseteq[0,\infty).
\]
Then there is a PSD $p\times p$ matrix-valued Borel measure
$\Omega$ supported on $K_G$ such that $\tr\Omega(K_G)=1$ and
\begin{equation}\label{eq:matrix-measure-representation}
 \mathcal L(P)=\int_{K_G}\tr\bigl(P(x,\la)\,d\Omega(x,\la)\bigr)
 \qquad(P\in\Sym^p\otimes\V_1).
\end{equation}
\end{lemma}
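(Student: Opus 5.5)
The plan is to apply Lemma~\ref{lem:matrix-gns} with $r=1$ and then use the joint spectral theorem. The point is that with a single parameter the whole family of operators becomes commutative. Lemma~\ref{lem:matrix-gns} supplies a separable Hilbert space $\mathcal H$, commuting bounded self-adjoint $X_1,\ldots,X_n$, one bounded self-adjoint $T$ commuting with every $X_j$, and vectors $\xi_1,\ldots,\xi_p$ with $\sum_a\norm{\xi_a}^2=1$. It also gives \eqref{eq:matrix-gns-functional}, \eqref{eq:matrix-gns-localization}, and the bounds $\sum_jX_j^2\preceq RI$ and $-CI\preceq T\preceq CI$. Since $r=1$, the $n+1$ operators $X_1,\ldots,X_n,T$ pairwise commute; this is exactly what fails for $r\geq2$, where the $T_k$ need not commute with each other.

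Next I would invoke the joint spectral theorem \cite[Theorems~5.21 and~5.23]{schmudgen2012unbounded} for this commuting family. It gives a projection-valued measure $E$ on $\R^{n+1}$ supported in the compact set $\{\norm{x}^2\leq R,\ |\la|\leq C\}$, with $q(X)T^\epsilon=\int q(x)\la^\epsilon\,dE(x,\la)$ for $\epsilon\in\{0,1\}$. I would then define
\[
 \Omega_{ab}(B)\coloneqq \ip{E(B)\xi_b}{\xi_a}.
\]
For every $v\in\R^p$, $\sum_{a,b}v_av_b\Omega_{ab}(B)=\norm{E(B)\sum_b v_b\xi_b}^2\geq0$, so $\Omega$ is a PSD matrix measure. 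Its total trace is $\sum_a\norm{\xi_a}^2=1$. For each entry, the functional calculus gives $\ip{P_{ab}(X,T)\xi_b}{\xi_a}=\int P_{ab}(x,\la)\,d\Omega_{ab}$. Summing over $a,b$ and using the symmetry of $P$ and $\Omega$ yields \eqref{eq:matrix-measure-representation}, provided $\Omega$ is supported on $K_G$.

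The main step is the support claim $E(\R^{n+1}\setminus K_G)=0$, which I would derive from the operator inequality \eqref{eq:matrix-gns-localization}. First, for $u\in\mathbb Q^{q_i}$ and $\epsilon>0$, let $S_{u,\epsilon}\coloneqq\{(x,\la): u^\intercal G_i(x,\la)u<-\epsilon\}$, a Borel set. Suppose $E(S_{u,\epsilon})\neq0$, and pick $\zeta$ with $\eta\coloneqq E(S_{u,\epsilon})\zeta\neq0$. Test \eqref{eq:matrix-gns-localization} on the vector $(u_1\eta,\ldots,u_{q_i}\eta)\in\mathcal H^{q_i}$. Because every entry $g_{i,ab}(X,T)$ is a function of the same spectral measure, this gives
\[
 0\leq\int_{S_{u,\epsilon}}u^\intercal G_i(x,\la)u\,d\ip{E\zeta}{\zeta}\leq-\epsilon\norm{\eta}^2<0,
\]
a contradiction. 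Hence $E(S_{u,\epsilon})=0$ for all such $u$ and $\epsilon$. The complement of $K_G$ is the countable union of the sets $S_{u,1/\ell}$ over $i$, $u\in\mathbb Q^{q_i}$ and $\ell\in\N$, using density of $\mathbb Q^{q_i}$ and continuity of $u\mapsto u^\intercal G_iu$. Therefore $E$, and with it every $\Omega_{ab}$, vanishes off $K_G$. The only care needed is to check that the amplified operator $G_i(X,T)$ really is the entrywise spectral integral, which follows from multiplicativity of the functional calculus for the commuting family.
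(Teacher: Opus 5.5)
Your proposal is correct and follows essentially the same route as the paper. You apply Lemma~\ref{lem:matrix-gns} with $r=1$ so that $X_1,\ldots,X_n,T$ commute, take the joint projection-valued measure $E$, set $\Omega_{ab}(B)=\ip{E(B)\xi_b}{\xi_a}$, get the support on $K_G$ from the localization inequality tested along rational directions, and read off \eqref{eq:matrix-measure-representation} from the functional calculus. Your $\epsilon$-margin argument with the test vector $(u_1\eta,\ldots,u_{q_i}\eta)$ just spells out the step the paper compresses into ``$c^{\intercal}G_i(X,T)c\succeq0$, hence $E(\{c^{\intercal}G_ic<0\})=0$.''
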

\begin{proof}
Apply Lemma~\ref{lem:matrix-gns}.  Since $r=1$, the bounded self-adjoint
operators $X_1,\ldots,X_n,T$ commute and therefore have a joint projection-valued spectral measure $E$ on $\R^{n+1}$.  For a Borel set $B$, define
\begin{equation}\label{eq:matrix-valued-spectral-measure}
 \Omega(B)_{ab}\coloneqq\ip{E(B)\xi_a}{\xi_b},
 \qquad a,b\in[p].
\end{equation}
This is a countably additive symmetric matrix-valued measure, and for every
$c\in\R^p$,
\[
 c^{\intercal}\Omega(B)c
 =\ip{E(B)\sum_ac_a\xi_a}{\sum_bc_b\xi_b}\geq0.
\]
Thus $\Omega$ is PSD, while
\[
 \tr\Omega(\R^{n+1})
 =\sum_a\norm{\xi_a}^2=1.
\]

We next verify the support.  For $c\in\mathbb Q^{q_i}$,
\eqref{eq:matrix-gns-localization} implies
\[
 c^{\intercal}G_i(X,T)c\succeq0.
\]
The joint spectral theorem therefore gives
\[
 E\bigl(\{(x,t):c^{\intercal}G_i(x,t)c<0\}\bigr)=0.
\]
A symmetric matrix fails to be PSD if and only if its quadratic form is
negative on some rational vector.  Taking the countable union over
$i\in[m]$ and $c\in\mathbb Q^{q_i}$ shows that $E$ is concentrated on
$K_G$.  Equation \eqref{eq:matrix-valued-spectral-measure} then shows that
$\Omega$ is supported on $K_G$ as well.

Finally, for $P\in\Sym^p\otimes\V_1$, the scalar joint functional calculus
and \eqref{eq:matrix-gns-functional} give
\begin{align*}
 \int_{K_G}\tr(P\,d\Omega)
 &=\sum_{a,b=1}^p\int_{K_G}P_{ab}\,d\Omega_{ba}\\
 &=\sum_{a,b=1}^p\ip{P_{ab}(X,T)\xi_b}{\xi_a}
 =\mathcal L(P).
\end{align*}
This proves every assertion, including the coefficientwise integral
convention in \eqref{eq:matrix-measure-representation}.
\end{proof}

For $r\geq2$, Lemma~\ref{lem:matrix-gns} still produces bounded
self-adjoint operators $T_1,\ldots,T_r$ commuting with the
principal-variable operators, but it does not force $T_jT_k=T_kT_j$. Thus a
joint scalar spectral measure, and hence the representation
\eqref{eq:matrix-measure-representation}, need not exist. This is the
operator obstruction exhibited concretely in Example~\ref{ex:matrixfailure}.

\begin{theorem}[One-parameter matrix Positivstellensatz]
\label{thm:one-parameter-matrix}
Let $r=1$, and suppose that $\mathfrak M_x^p(G)$ is restricted Archimedean.  If
\[
 F(x,\la)\succ0\qquad\text{for every }(x,\la)\in K_G,
\]
then $F\in\mathfrak M_x^p(G)$.
\end{theorem}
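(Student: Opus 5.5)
The argument is by separation, following the proof of Theorem~\ref{thm:main}, with Lemma~\ref{lem:matrix-representation} taking the place of the scalar graph representation. The ambient space is $V\coloneqq\Sym^p\otimes\V_1$, and the cone is $\mathfrak M_x^p(G)$, which is convex. By hypothesis $I_p$ is an order unit for it.

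Suppose $F\notin\mathfrak M_x^p(G)$. Lemma~\ref{lem:sep} then provides a linear functional $\mathcal L:V\to\R$ with
\[
 \mathcal L\bigl(\mathfrak M_x^p(G)\bigr)\subseteq[0,\infty),\qquad
 \mathcal L(I_p)=1,\qquad
 \mathcal L(F)\leq0.
\]
Because $r=1$, Lemma~\ref{lem:matrix-representation} applies to $\mathcal L$. It gives a PSD matrix-valued Borel measure $\Omega$ supported on $K_G$ with $\tr\Omega(K_G)=1$ and $\mathcal L(P)=\int_{K_G}\tr(P\,d\Omega)$ for all $P\in V$.

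Two consequences follow. First, $K_G$ is nonempty, since otherwise $\Omega$ would vanish and could not have trace one. Second, $K_G$ is compact: the ball bound $RI_p-\norm{x}^2I_p$ and the parameter bounds $C_kI_p\pm\la I_p$ lie in the cone (they come from the order-unit property applied to $\norm{x}^2I_p$ and $\la I_p$), and evaluating them at points of $K_G$ shows that $K_G$ is bounded. Since $K_G$ is also closed, it is compact.

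The key step is to turn pointwise positive definiteness into a uniform margin. The function $(x,\la)\mapsto\lambda_{\min}(F(x,\la))$ is continuous and positive on the compact set $K_G$, so there is $\delta>0$ with $F(x,\la)\succeq\delta I_p$ on $K_G$. I then need
\[
 \int_{K_G}\tr\bigl(F\,d\Omega\bigr)\geq\delta\,\tr\Omega(K_G)=\delta .
\]
To justify this, I write $\Omega=W\,d\omega$, where $\omega\coloneqq\tr\Omega$ is a finite positive scalar measure and $W$ is a PSD Radon--Nikodym density. Each entry $\Omega_{ab}$ is absolutely continuous with respect to $\omega$ because $|\Omega_{ab}(B)|\leq\tfrac12\bigl(\Omega_{aa}(B)+\Omega_{bb}(B)\bigr)$. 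With this density in hand,
\[
 \tr\bigl(FW\bigr)\geq\delta\tr W\quad\omega\text{-a.e.},
\]
because $F-\delta I_p\succeq0$ and $W\succeq0$, and the trace of a product of two PSD matrices is nonnegative. Integrating gives $\mathcal L(F)\geq\delta>0$, which contradicts $\mathcal L(F)\leq0$. Hence $F\in\mathfrak M_x^p(G)$.

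The hardest part of the proof is already contained in Lemma~\ref{lem:matrix-representation}, namely the commutativity of $T$ with the $X_j$ that makes a joint spectral measure available. What remains here is routine: checking compactness of $K_G$ and the density argument for the matrix-valued measure. This is also exactly the place where the case $r\geq2$ breaks down, since there no such measure $\Omega$ need exist.
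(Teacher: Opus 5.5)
Your proposal is correct and follows the paper's own proof: separation via Lemma~\ref{lem:sep}, the PSD matrix-valued measure from Lemma~\ref{lem:matrix-representation}, a uniform margin $F\succeq\delta I_p$ on the compact set $K_G$, and the contradiction $\mathcal L(F)\geq\delta\tr\Omega(K_G)=\delta>0$. Your additional details fill in steps the paper states without elaboration. These are boundedness of $K_G$ from the certified bounds $RI_p-\norm{x}^2I_p$ and $C I_p\pm\la I_p$, and the Radon--Nikodym density argument for $\int\tr(F\,d\Omega)\geq\delta\tr\Omega(K_G)$.
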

\begin{proof}
If $F$ were outside the cone, order-unit separation would give a normalized
linear functional on $\Sym^p\otimes\V_1$ that is nonnegative on
$\mathfrak M_x^p(G)$ and nonpositive at $F$.  Lemma~\ref{lem:matrix-representation} then represents this
functional by a positive matrix-valued measure supported on $K_G$.  Since
$K_G$ is compact and $F\succ0$ there, $F\succeq\eps I_p$ on $K_G$ for some
$\eps>0$. Positivity of the matrix-valued measure gives
\[
 \mathcal L(F)\geq\eps\tr\Omega(K_G)=\eps,
\]
contradicting the separating inequality $\mathcal L(F)\leq0$.
\end{proof}

When the constraints are scalar and $\M_x(g)$ is restricted Archimedean, its
matrix lift is also restricted Archimedean.  Indeed, scalar order bounds can be multiplied by
constant rank-one PSD matrices, and the usual
$e_a\pm e_b$ decomposition bounds the off-diagonal entries of an arbitrary
symmetric polynomial matrix.

\subsection{Matrix objectives and several affine parameters}
The preceding theorem cannot be extended to several parameters under
ordinary pointwise positivity.

\begin{example}\label{ex:matrixfailure}
Fix $c\geq0$. Let the parameter set be $[-1,1]^2$, described by
$1\pm\la_1\geq0$ and $1\pm\la_2\geq0$. Put
\[
 Z=\begin{pmatrix}1&0\\0&-1\end{pmatrix},\qquad
 X=\begin{pmatrix}0&1\\1&0\end{pmatrix},\qquad
 F_c(\la)=I_2+c\la_1Z+c\la_2X.
\]
Because $Z^2=X^2=I_2$ and $ZX+XZ=0$, the eigenvalues of $F_c(\la)$ are
\[
 1\pm c\sqrt{\la_1^2+\la_2^2}.
\]
Hence $F_c\succ0$ on the square whenever $c<1/\sqrt2$.

Suppose that a restricted certificate existed:
\begin{equation}\label{eq:matriximpossible}
 F_c=S_0+(1+\la_1)A_++(1-\la_1)A_-
          +(1+\la_2)B_++(1-\la_2)B_-,
\end{equation}
where all five coefficient matrices are PSD.  Coefficient
matching gives
\[
 A_+-A_-=cZ,\qquad B_+-B_-=cX,
 \qquad S_0+A_++A_-+B_++B_-=I_2.
\]
For PSD $P,Q$,
$\tr(P+Q)\geq\|P-Q\|_*$, where $\|\cdot\|_*$ is the nuclear norm.  Therefore
\[
 \tr(A_++A_-)\geq2c,\qquad
 \tr(B_++B_-)\geq2c.
\]
Taking traces in the constant equation yields $2\geq4c$.  Thus no
certificate exists when $c>1/2$.

The threshold is exact.  If $0\leq c\leq1/2$, set
\[
 A_\pm=\frac c2(I_2\pm Z),\qquad
 B_\pm=\frac c2(I_2\pm X),\qquad
 S_0=(1-2c)I_2.
\]
These matrices are PSD and satisfy
\eqref{eq:matriximpossible}.  Consequently,
\[
 \begin{array}{ccl}
 F_c\succeq0\text{ on }[-1,1]^2&\Longleftrightarrow&c\leq1/\sqrt2,\\
 F_c\text{ has the restricted certificate}&\Longleftrightarrow&c\leq1/2.
 \end{array}
\]
The gap $1/2<c<1/\sqrt2$ disproves the several-parameter matrix extension of the restricted Positivstellensatz even in the absence of principal variables.
\end{example}

The obstruction has an operator-theoretic interpretation.  A matrix-valued separating
functional has a nontrivial multiplicity space.  Each parameter yields a
self-adjoint operator $T_k$ commuting with the principal-variable operators,
but $T_j$ and $T_k$ need not commute with one another.  Scalar evaluation on
$\R^r$ tests only commuting tuples.  In Example~\ref{ex:matrixfailure}, the
noncommuting contraction tuple $(Z,X)$ witnesses the stronger obstruction:
for $c>1/2$, the free evaluation
$I_2\otimes I_2+cZ\otimes Z+cX\otimes X$ is not PSD.

Thus, for several parameters, pointwise positivity should be replaced by the uniform free positivity condition below.
For $x\in\R^n$, a $q\times q$ symmetric matrix tuple
$A=(A_1,\ldots,A_r)$ is feasible if
\begin{equation}\label{eq:free-feasible}
 G_i(x,A)\coloneqq G_{i0}(x)\otimes I_q
       +\sum_{k=1}^rG_{ik}(x)\otimes A_k\succeq0
 \quad(i\in[m]).
\end{equation}
Define $F(x,A)$ analogously.

\begin{theorem}\label{thm:free-matrix}
Suppose that $\mathfrak M_x^p(G)$ is restricted Archimedean and that there is
$\eps>0$ such that
\begin{equation}\label{eq:uniform-free-positive}
 F(x,A)\succeq\eps I_{pq}
\end{equation}
for every $q\geq1$, every $x\in\R^n$, and every tuple $A$ satisfying
\eqref{eq:free-feasible}.  Then $F\in\mathfrak M_x^p(G)$.
\end{theorem}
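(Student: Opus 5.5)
We argue by contradiction, following the pattern of Theorem~\ref{thm:one-parameter-matrix}. If $F\notin\mathfrak M_x^p(G)$, Lemma~\ref{lem:sep} (with order unit $I_p$ in $\Sym^p\otimes\V_r$) gives a linear functional $\mathcal L$ with $\mathcal L(I_p)=1$, $\mathcal L\geq0$ on $\mathfrak M_x^p(G)$, and $\mathcal L(F)\leq0$. Lemma~\ref{lem:matrix-gns} then supplies a real Hilbert space $\mathcal H$, commuting self-adjoint $X_j$, and self-adjoint $T_k$ commuting with every $X_j$ (but not necessarily with one another). It also supplies vectors $\xi_a$ with $\sum_a\norm{\xi_a}^2=1$, the representation \eqref{eq:matrix-gns-functional}, and $G_i(X,T)\succeq0$ on $\mathcal H^{q_i}$. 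The goal is to show $F(X,T)\succeq\eps I$ on $\mathcal H^p$. Then \eqref{eq:matrix-gns-functional} gives $\mathcal L(F)=\ip{F(X,T)\xi}{\xi}\geq\eps\sum_a\norm{\xi_a}^2=\eps$, where $\xi=(\xi_1,\ldots,\xi_p)$, contradicting $\mathcal L(F)\leq0$.

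To transfer the hypothesis \eqref{eq:uniform-free-positive}, which concerns scalar points $x$ and finite matrix tuples $A$, to the operator tuple $(X,T)$, we diagonalize the commutative part. The $X_j$ generate a commutative algebra, and we take its joint spectral measure $E$ on the compact set $\{\norm{x}^2\leq R\}$. Each $T_k$ commutes with every $X_j$, so it lies in the commutant. By direct-integral theory (separable $\mathcal H$), $\mathcal H\cong\int^\oplus\mathcal H_x\,d\nu(x)$, where the $X_j$ act as multiplication by $x_j$ and each $T_k$ is decomposable, $T_k=\int^\oplus T_k(x)\,d\nu$, with $\norm{T_k(x)}\leq C_k$. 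Then $G_i(X,T)=\int^\oplus G_i(x,T(x))\,d\nu$ and $F(X,T)=\int^\oplus F(x,T(x))\,d\nu$, because each is affine in $T$ with coefficients that are functions of $X$. Positivity of a decomposable operator is equivalent to fiberwise positivity almost everywhere. Arguing with rational test vectors and countable dense sets, as in Lemma~\ref{lem:matrix-representation}, gives $G_i(x,T(x))\succeq0$ on $\mathcal H_x^{q_i}$ for $\nu$-a.e.\ $x$.

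It remains to show, for each fixed $x$, that feasibility of the possibly infinite-dimensional tuple $T(x)$ implies $F(x,T(x))\succeq\eps I$. This finite-to-infinite passage is the main obstacle. We will use compressions. Let $W\subseteq\mathcal H_x$ be finite-dimensional with orthogonal projection $P_W$, and put $A_k=P_WT_k(x)|_W$. Since the inequalities are affine in the tuple, $G_i(x,A)=(I\otimes P_W)G_i(x,T(x))(I\otimes P_W)|_{\R^{q_i}\otimes W}\succeq0$, so $A$ is a feasible $q$-tuple with $q=\dim W$. Applying \eqref{eq:uniform-free-positive} gives $F(x,A)\succeq\eps I_{pq}$, which is exactly the compression of $F(x,T(x))$ to $\R^p\otimes W$. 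Any vector $v\in\R^p\otimes\mathcal H_x$ lies in such a compression space, namely $\R^p\otimes W$ with $W$ spanned by its $p$ components. Hence $\ip{F(x,T(x))v}{v}\geq\eps\norm{v}^2$.

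The compression argument works only because everything is affine in $\la$, which is precisely why no commutation among the $T_k$ is needed. Its advantage is that it even avoids the direct integral. We can compress $(X,T)$ itself by decomposing along a finite partition of the $X$-spectrum and approximating $X$ by scalars on each piece. The cleanest route, however, is the measurable disintegration. Its technical points to check are the measurable choice of the fiber operators $T_k(x)$ and the countable-union argument for null sets, both of which are standard for separable $\mathcal H$.
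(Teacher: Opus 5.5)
Your proposal is correct and is essentially the paper's proof: order-unit separation, the amplified GNS Lemma~\ref{lem:matrix-gns}, a direct-integral decomposition over the abelian algebra generated by the $X_j$ with decomposable $T_k$, fiberwise feasibility, and compression of $T(x)$ to the span of the components of a test vector, which works precisely because everything is affine in $\la$. Your side remark that the direct integral could be avoided by discretizing the $X$-spectrum is not needed and would require an extra argument: replacing $X$ by nearby scalars on each spectral piece preserves the constraints only approximately, whereas \eqref{eq:uniform-free-positive} is assumed only for exactly feasible tuples.
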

\begin{proof}
Suppose $F\notin\mathfrak M_x^p(G)$.  Order-unit separation gives a linear
functional $\mathcal L$ with
\[
 \mathcal L(I_p)=1,\qquad
 \mathcal L(\mathfrak M_x^p(G))\subseteq[0,\infty),\qquad
 \mathcal L(F)\leq0.
\]
Apply Lemma~\ref{lem:matrix-gns} and write
$X=(X_1,\ldots,X_n)$ and $T=(T_1,\ldots,T_r)$.  The Hilbert space is
separable, and the real form of the spectral theorem for the abelian von
Neumann algebra generated by the $X_j$ gives a direct-integral decomposition
(equivalently, one may complexify and retain the conjugation-invariant real
form)
\cite[Chapter~14]{KadisonRingrose1997}
\[
 \mathcal H=\int_S^\oplus\mathcal H_x\,d\mu(x),
 \qquad
 X_j=\int_S^\oplus x_jI_{\mathcal H_x}\,d\mu(x),
\]
over a compact set $S\subseteq\R^n$.  Because every $T_k$ commutes with the
spectral projections of $X$, it is decomposable:
\[
 T_k=\int_S^\oplus T_k(x)\,d\mu(x),
\]
where $T_k(x)$ is bounded and self-adjoint for almost every $x$.  The
operator inequalities \eqref{eq:matrix-gns-localization} decompose
fiberwise, so
\begin{equation}\label{eq:fiber-operator-feasibility}
 G_i(x,T(x))\succeq0
 \quad\text{on }\mathbb R^{q_i}\otimes\mathcal H_x
 \quad\text{for almost every }x.
\end{equation}

Fix such an $x$ and a finite-dimensional subspace
$E\subseteq\mathcal H_x$.  Let $P_E$ be the orthogonal projection onto $E$
and set
\[
 A_k\coloneqq P_ET_k(x)|_E\in\Sym^{\dim E}.
\]
Since every pencil is affine in the parameter tuple,
\[
 G_i(x,A)
 =(I_{q_i}\otimes P_E)G_i(x,T(x))
   |_{\mathbb R^{q_i}\otimes E}\succeq0.
\]
Hence the hypothesis \eqref{eq:uniform-free-positive} gives
$F(x,A)\succeq\eps I_{p\dim E}$.  Now take an arbitrary vector
$\eta=(\eta_1,\ldots,\eta_p)\in\mathcal H_x^p$ and choose
$E=\operatorname{span}\{\eta_1,\ldots,\eta_p\}$.  Affineness again gives
\[
 \ip{F(x,T(x))\eta}{\eta}
 =\ip{F(x,A)\eta}{\eta}
 \geq\eps\norm{\eta}^2.
\]
Thus $F(x,T(x))\succeq\eps I$ for almost every $x$, and therefore
$F(X,T)\succeq\eps I$ on $\mathcal H^p$.

With $\xi=(\xi_1,\ldots,\xi_p)$, equations
\eqref{eq:matrix-gns-functional} and
\eqref{eq:matrix-gns-normalization} now yield
\[
 \mathcal L(F)=\ip{F(X,T)\xi}{\xi}
 \geq\eps\norm{\xi}^2=\eps,
\]
contradicting $\mathcal L(F)\leq0$.
\end{proof}

\subsection{A simplex special case}
Ordinary positivity can remain sufficient for several parameters when the
certificate cone records the simplex geometry.  Let the simplex
$\Delta=\conv\{v_0,\ldots,v_r\}$ have barycentric coordinates
$\ell_0,\ldots,\ell_r$, and let
$X=\{x\in\mathbb R^n:a_i(x)\geq0,\ i=1,\ldots,t\}$.

\begin{proposition}\label{prop:simplex-matrix}
Assume that the matrix quadratic module generated by the $a_i$ is Archimedean.  If
$F(x,\la)$ is affine in $\la$ and
$F(x,\la)\succ0$ on $X\times\Delta$, then
\begin{equation}\label{eq:simplex-certificate}
 F(x,\la)=\sum_{k=0}^r\ell_k(\la)
 \left(S_{k0}(x)+\sum_{i=1}^t a_i(x)S_{ki}(x)\right),
\end{equation}
where all $S_{ki}$ are SOS polynomial matrices.
\end{proposition}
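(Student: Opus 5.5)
The natural route is to exploit that an affine map on a simplex is determined by its vertex values, and then apply the ordinary (non-parametric) matrix Putinar Positivstellensatz once per vertex. Since $\ell_0,\ldots,\ell_r$ are the barycentric coordinates of $\Delta$, they are affine in $\la$, sum to $1$, and satisfy $\ell_k(v_j)=\delta_{kj}$. Moreover, every affine function $\phi$ of $\la$ satisfies
\[
 \phi(\la)=\sum_{k=0}^r\ell_k(\la)\phi(v_k)\qquad(\la\in\R^r).
\]
This is an identity of affine functions: both sides are affine in $\la$ and agree at the $r+1$ affinely independent points $v_k$, so they agree on all of $\R^r$.

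Applying this identity entrywise to $F(x,\cdot)$, with $x$ treated as a parameter, gives the polynomial identity
\[
 F(x,\la)=\sum_{k=0}^r\ell_k(\la)F(x,v_k).
\]
Each $F(x,v_k)\in\Sym^p[x]$ is a polynomial matrix in $x$ alone. Because $v_k\in\Delta$, the hypothesis $F\succ0$ on $X\times\Delta$ yields $F(x,v_k)\succ0$ for every $x\in X$.

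Next, apply the matrix version of Putinar's Positivstellensatz (Scherer--Hol, or Klep--Schweighofer for polynomial matrices) to each $F(\cdot,v_k)$. Its hypothesis is exactly the Archimedean assumption on the matrix quadratic module generated by the $a_i$. Alternatively, it can be derived within the paper as the case $r=0$ of Theorem~\ref{thm:one-parameter-matrix}, taking $G_i=a_i$ and noting that a parameter-free objective lies in $\Sym^p\otimes\V_1$ with zero $\la$-part. That route needs a little care, since restricted Archimedeanity in $\V_1$ requires bounds $C\pm\la$, which are absent. The cleanest fix is to cite the classical Scherer--Hol theorem directly. Either way, we obtain
\[
 F(x,v_k)=S_{k0}(x)+\sum_{i=1}^t a_i(x)S_{ki}(x)
\]
with SOS polynomial matrices $S_{ki}$.

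Substituting these representations into the barycentric identity gives \eqref{eq:simplex-certificate}. The only genuine step is invoking the matrix Putinar theorem. The rest is the observation that barycentric interpolation is exact for affine functions, and that compactness is not even needed beyond what the Archimedean hypothesis already supplies for $X$. The main potential obstacle is bookkeeping rather than mathematics: the statement's Archimedean hypothesis must be read as the matrix-module version, which is guaranteed if the scalar module is Archimedean (by the remark following Theorem~\ref{thm:one-parameter-matrix}), so that the cited matrix Positivstellensatz applies verbatim.
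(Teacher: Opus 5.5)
Your proposal is correct and follows essentially the same route as the paper: apply the Scherer--Hol theorem to each vertex matrix $F(x,v_k)\succ0$ on $X$, then sum these certificates against the barycentric coordinates $\ell_k$, using that an affine pencil equals the barycentric interpolation of its vertex values. Your remark that the $r=0$ specialization of Theorem~\ref{thm:one-parameter-matrix} would need extra care with the parameter bounds, so citing Scherer--Hol directly is cleaner, agrees with how the paper handles it.
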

\begin{proof}
At every vertex, $F(x,v_k)\succ0$ on $X$.  The Scherer--Hol theorem \cite{SchererHol2006} gives
$F(x,v_k)=S_{k0}+\sum_i a_iS_{ki}$.  Since an affine pencil equals the
barycentric interpolation of its vertex values, summing these identities
against $\ell_k$ proves \eqref{eq:simplex-certificate}.
\end{proof}
The certificate \eqref{eq:simplex-certificate} uses the additional generators
$\ell_k a_i$, which encode the product structure of $X\times\Delta$.  They need not belong to the smaller
quadratic module generated separately by the $a_i$ and $\ell_k$.  If each
vertex matrix $F(x,v_k)$ is itself an SOS matrix, the terms involving $a_i$
are absent and the smaller simplex module suffices.

% The matrix conclusions are summarized in Table~\ref{tab:matrix-summary}.
% \begin{table}[htbp]
% \centering
% \caption{Sufficient positivity conditions for restricted matrix certificates,
% under the respective Archimedean hypotheses. All pointwise and vertexwise
% conditions are strict.}
% \label{tab:matrix-summary}
% \small
% \renewcommand{\arraystretch}{1.12}
% \begin{tabular}{@{}p{.46\linewidth}p{.12\linewidth}p{.34\linewidth}@{}}
% \toprule
% Objective and constraints & Parameters & Positivity condition\\
% \midrule
% Scalar objective, scalar constraints & any $r$ & scalar pointwise positivity\\
% Scalar objective, matrix constraints & any $r$ & scalar pointwise positivity\\
% Matrix objective, scalar or matrix constraints & $r=1$ & matrix pointwise positivity\\
% Matrix objective, scalar or matrix constraints & $r\geq2$ & uniform free positivity\\
% Matrix objective, simplex product cone & any $r$ & vertexwise matrix positivity\\
% \bottomrule
% \end{tabular}
% \end{table}

%% file: sections/07_applications.tex
\section{Applications}\label{sec:applications}
We now apply the preceding theory to optimization problems with affine
parameter dependence. The applications include robust linear programming,
robust Lyapunov inequalities, min-max polynomial optimization, robust polynomial matrix inequalities, and analysis and control of input-affine dynamical systems. Throughout this section, the relevant restricted quadratic modules are assumed to be Archimedean.

% \subsection{Uniform lower bounds and robust feasibility}
% For $p\in\V_r$, a certificate
% \begin{equation}\label{eq:uniform-lower-certificate}
%  p-\gamma\in\M_x(g)
% \end{equation}
% proves that $p(x,\la)\geq\gamma$ for every $(x,\la)\in K$.
% Conversely, if $p>\gamma$ on $K$, Theorem~\ref{thm:main} guarantees
% \eqref{eq:uniform-lower-certificate} at some finite relaxation order.
% Optimizing $\gamma$ therefore gives the convergent moment-SOS hierarchies of
% Section~\ref{sec:hierarchy}, and the SOS multipliers remain independent of the parameter variables.
% In particular, a certificate with $\gamma>0$ gives an explicit uniform positivity margin.

% The same observation applies to a family
% \[
%  h_c(x,\la)=h_0(x,\la)+\sum_{\ell=1}^N c_\ell h_\ell(x,\la),
% \]
% where the coefficients depend affinely on a decision vector
% $c\in\R^N$.  At a fixed relaxation order, the identity
% \[
% h_c=\sigma_0+\sum_i\sigma_i g_i
% \]
% is affine in $c$ and in the Gram matrices of the $\sigma_i$'s.  Thus a linear
% objective and affine side constraints on $c$ produce an SDP. One may also
% maximize a common margin $\gamma$ subject to
% $h_c-\gamma\in\M_{x,d}(g)$, thereby jointly selecting a design and certifying
% its worst-case performance.

\subsection{Robust linear programming}
Consider the robust linear programming problem\footnote{Here, $\la$ denotes the decision vector of the linear program, while $x$ denotes the parameter vector.}:
\begin{equation}\label{eq:robust-lp}
f^*\coloneqq\max_{x\in X}\min_{\lambda} \{f(x,\lambda):g_i(x,\la)\geq0,\ i\in[m]\},
\end{equation}
where $f,g_i\in\V_r$ are affine in $\lambda$ and $X\subset\mathbb R^n$ is a compact basic semialgebraic set. Problem~\eqref{eq:robust-lp} is a special case of the robust polynomial optimization problem studied in \cite{lasserre2011min}. Following \cite{lasserre2011min}, we develop a semidefinite relaxation approach to \eqref{eq:robust-lp} using the restricted-multiplier Positivstellens\"atze.

Let $\Omega\subset\mathbb R^n$ be a simple set (e.g., a box or an ellipsoid) that contains $X$ and is defined by $g_i(x)\ge0$, $i=m+1,\dots,m+p$. Set $g\coloneqq(g_i)_{i=1}^{m+p}$. We make the following assumption:
\begin{assumption}\label{assump}
For every $x\in\Omega$, the set $\Lambda(x)= \{\la\in\R^r:g_i(x,\la)\geq0,\ i\in[m]\}$ is nonempty.
\end{assumption}
The optimal value function is defined by
\begin{equation}\label{eq:valuefunction}
 \psi(x)\coloneqq \min_{\la\in\Lambda(x)}f(x,\la),\qquad x\in \Omega,
\end{equation}
which is lower semicontinuous. Every polynomial $p\in\R[x]$ satisfying $f-p\in\M_x(g)$ obeys $p\leq\psi$ on $\Omega$. 
% This gives a convergent method for approximating $\psi$ from below in an integral sense.
The following theorem is a specialization of Theorems 2.2 and 2.4 of \cite{lasserre2011min}.
\begin{theorem}
\label{thm:robust-lp}
Suppose that $\M_x(g)$ is restricted Archimedean and Assumption~\ref{assump} holds.
Let $\nu$ be the Borel probability measure with uniform distribution on $\Omega$, and let $\phi_d(x)$ be an optimal or nearly optimal solution (e.g., such that $\int_\Omega \phi_d\,d\nu\ge\tau_d-1/d$) of
\begin{equation}\label{eq:value-approx-sdp}
\tau_d\coloneqq\sup\left\{\int_\Omega p\,d\nu:
 p\in\R[x]_{\leq2d},\ f-p\in\M_{x,d}(g)\right\}.
\end{equation}
Then as $d\to\infty$,
\[
\int_{\Omega}|\psi(x)-\phi_d(x)|\,d\nu(x)\to 0.
\]
Moreover, letting $v_d\coloneqq \max_{x\in X}\phi_d(x)$ and $\hat{v}_d\coloneqq \max_{i\le d}\{v_i\}$, one has $\hat{v}_d\to f^*$ as $d\to\infty$.
\end{theorem}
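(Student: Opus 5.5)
The proof follows Lasserre's argument in \cite{lasserre2011min}, with Theorem~\ref{thm:main} taking the place of Putinar's Positivstellensatz. It has three steps.

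\textbf{Step 1: bounds on $\tau_d$.} For every feasible $p$ in \eqref{eq:value-approx-sdp}, evaluating the certificate on $K$ gives $p(x)\leq f(x,\la)$ for all $\la\in\Lambda(x)$. Hence $p\leq\psi$ on $\Omega$ and $\tau_d\leq\int_\Omega\psi\,d\nu$. Note that $\psi$ is finite on $\Omega$: each $\Lambda(x)$ is nonempty by Assumption~\ref{assump}, and $K$ is compact.

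For the reverse direction we need feasible polynomials close to $\psi$ in $L^1(\nu)$. The function $\psi$ is lower semicontinuous and bounded below on the compact set $\Omega$. It is therefore the pointwise increasing limit of continuous functions $h_j\leq\psi$ (Baire), and each $h_j$ can be approximated uniformly on $\Omega$ by polynomials (Stone--Weierstrass). Given $\eps>0$, monotone convergence gives a polynomial $p_\eps$ with
\[
 p_\eps<\psi-\eps/2\text{ on }\Omega\quad\text{and}\quad\int_\Omega(\psi-p_\eps)\,d\nu<\eps .
\]
Then $f-p_\eps\geq\eps/2>0$ on $K$, because $(x,\la)\in K$ forces $x\in\Omega$ and $f(x,\la)\geq\psi(x)$. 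Theorem~\ref{thm:main} yields $f-p_\eps\in\M_x(g)$, and this certificate lies in $\M_{x,d}(g)$ for all large $d$. Since the $\M_{x,d}(g)$ are nested, $\tau_d\uparrow\int_\Omega\psi\,d\nu$.

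\textbf{Step 2: $L^1$ convergence.} Every feasible $\phi_d$ satisfies $\phi_d\leq\psi$, so
\[
 \int_\Omega|\psi-\phi_d|\,d\nu=\int_\Omega\psi\,d\nu-\int_\Omega\phi_d\,d\nu\leq\Bigl(\int_\Omega\psi\,d\nu-\tau_d\Bigr)+\frac1d\to0 .
\]

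\textbf{Step 3: convergence of $\hat v_d$.} The upper bound is immediate: $v_d=\max_X\phi_d\leq\max_X\psi=f^*$, and the maximum of $\psi$ on $X$ is attained because $\psi$ is lower semicontinuous only if one argues via the supremum. I will simply work with $\sup_X\psi$, which equals $f^*$ by definition.

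The lower bound is the main obstacle, because $L^1$ convergence on $\Omega$ does not give pointwise control on $X$, and $X$ may have $\nu$-measure zero. I would argue as Lasserre's Theorem~2.4 does.
\begin{itemize}
\item $L^1$ convergence gives a subsequence $\phi_{d_j}\to\psi$ $\nu$-almost everywhere.
\item Take $x^*\in X$ with $\psi(x^*)>f^*-\eps$. To carry this near-optimality to points of positive $\nu$-measure close to $x^*$, use lower semicontinuity: $\psi>f^*-\eps$ on a neighbourhood $U$ of $x^*$.
\item On $U\cap\Omega$, almost-everywhere convergence gives points where $\phi_{d_j}$ eventually exceeds $f^*-2\eps$.
\end{itemize}
The delicate point is that $U$ need not lie inside $X$. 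If this cannot be repaired directly, I would rely on the hypotheses of \cite[Theorem~2.4]{lasserre2011min}, under which $\nu$ has uniform distribution on $X$ itself (or $X=\Omega$ up to a closure argument), so that the points found above lie in $X$. The inequality $v_{d_j}\geq f^*-2\eps$ eventually, combined with $\hat v_d\leq f^*$ and the monotonicity of $\hat v_d$ by construction, then gives $\hat v_d\to f^*$.
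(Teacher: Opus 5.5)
Your Steps~1 and~2 are correct and follow the route the paper intends; the paper gives no argument beyond calling the result a specialization of \cite[Theorems~2.2 and~2.4]{lasserre2011min}. Theorem~\ref{thm:main} replaces Putinar's theorem in certifying $f-p_\eps$, and $\phi_d\leq\psi$ on $\Omega$ turns $\int_\Omega\psi\,d\nu-\tau_d\to0$ into $L^1(\nu)$ convergence. Drop your remark on attainment in Step~3. A lower semicontinuous $\psi$ need not attain its supremum on $X$, so $f^*$ must be read as $\sup_X\psi$, and that already gives $v_d\leq f^*$.

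\textbf{The gap: the lower bound in Step~3.} This is not a matter of technique: under the stated hypotheses the lower bound is false. Your bullets produce points of $U\cap\Omega$ where $\phi_{d_j}$ is large. But $v_{d_j}=\max_X\phi_{d_j}$ needs such a point in $X$, and nothing guarantees $\nu(U\cap X)>0$.

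\emph{Counterexample.} Take $n\geq3$, $r=1$, $g=(1+\la,1-\la,1-x_1^2,\ldots,1-x_n^2)$ (so $\Omega=[-1,1]^n$), $f\equiv0$ and $X=\{0\}=\{x:-\norm{x}^2\geq0\}$. All hypotheses hold, $\psi\equiv0=f^*$, and $\tau_d=0$. Let $\phi_d\coloneqq-c\,(1-\norm{x}^2/n)^{2\lfloor d/2\rfloor}$.
\begin{itemize}
\item Then $-\phi_d$ is the square of a polynomial of degree at most $d$, so $f-\phi_d\in\M_{x,d}(g)$.
\item The integral $|\int_\Omega\phi_d\,d\nu|$ is at most $c$ and is $O(d^{-n/2})=o(1/d)$.
\item Hence, for a suitable fixed $c>0$, every $\phi_d$ satisfies $\int_\Omega\phi_d\,d\nu\geq\tau_d-1/d$.
\item Yet $v_d=\phi_d(0)=-c$ for every $d$, so $\hat v_d=-c\not\to f^*$.
\end{itemize}
The same bump construction defeats the claim whenever the supremum of $\psi$ over $X$ is carried only by isolated points or lower-dimensional pieces of $X$.

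\textbf{Repair.} The missing ingredient is an extra hypothesis, for instance $\nu(U\cap X)>0$ for every open $U$ meeting $X$. This holds if $X$ is the closure of its interior, in particular if $X=\Omega$. With it, choose the point $x$ of a.e.\ convergence of $\phi_{d_j}$ inside $U\cap X$ rather than $U\cap\Omega$. Your argument then closes, since $v_{d_j}\geq\phi_{d_j}(x)\to\psi(x)>f^*-\eps$, and monotonicity of $\hat v_d$ does the rest.

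Your fallback of making $\nu$ uniform on $X$ is not the right repair. It changes the SDP \eqref{eq:value-approx-sdp} and the $L^1$ conclusion, both of which live on the simple set $\Omega$; moments of the uniform measure on a general semialgebraic $X$ are not available. By itself it also still does not exclude isolated points of $X$.
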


% \subsection{Region inclusion and universal quantifiers}
% Let
% \[
%  D\coloneqq \{x\in\R^n:a_j(x)\geq0,\ j=1,\ldots,s\}
% \]
% be a prescribed compact region, and consider
% \[
%  \mathcal R\coloneqq \{x:h(x,\la)\geq0
%           \text{ for every }\la\in\Lambda(x)\}.
% \]
% If
% \begin{equation}\label{eq:region-certificate}
%  h\in\M_x(a,g),
% \end{equation}
% then $D\subseteq\mathcal R$.  Conversely, if $h$ has a strictly positive
% margin on
% \[
%  \{(x,\la):x\in D,\ \la\in\Lambda(x)\}
% \]
% and the enlarged module is restricted Archimedean, then
% \eqref{eq:region-certificate} holds at some finite degree.

% When $h=h_c$ depends affinely on design variables, maximizing a linear
% functional of $c$ subject to a fixed-degree version of
% \eqref{eq:region-certificate} remains an SDP.  This yields inner
% approximations of robustly feasible design sets and certified regions of
% validity for parameter-dependent inequalities.

\subsection{Robust Lyapunov and dissipation inequalities}
Consider a polynomial vector field affine in the uncertainty:
\[
 \dot x=F(x,\la)
       =F_0(x)+\sum_{k=1}^r\la_kF_k(x).
\]
For a polynomial Lyapunov candidate $V$, its Lie derivative
$\nabla V(x)^{\intercal}F(x,\la)$ is affine in $\la$.  On a prescribed compact region,
a restricted SOS certificate
\begin{equation}\label{eq:lyap}
 -\nabla V(x)^{\intercal}F(x,\la)-w(x)\in\M_x(g)
\end{equation}
proves the uniform decay estimate $\dot V\leq-w$ for every admissible parameter.

This condition also leads to a convex synthesis problem. Choose a fixed positive-definite polynomial $q$, and
set $w=\eta q$ with an unknown decay margin $\eta\geq0$. Positivity of $V$
on the state region can be imposed by
\[
 V(x)-\epsilon q(x)\in\M_x(a),
 \qquad \epsilon>0,
\]
where the $a_i$'s describe the state region. If the vector field and the
uncertainty generators are fixed, coefficient matching in this constraint
and in \eqref{eq:lyap} is affine in the coefficients of $V$, the margin
$\eta$, and the SOS Gram matrices. Maximizing $\eta$ therefore gives an SDP
for constructing a common polynomial Lyapunov function with a certified
worst-case decay rate.

\subsection{Min-max polynomial optimization}
We consider a design variable $z\in Z\subseteq\R^p$ that is
distinct from the variables $(x,\la)$ in the uncertainty set $K$. Assume
that $Z$ and $K$ are nonempty and compact,
and that $\M_x(g)$ is restricted Archimedean.  Let
\[
 f(z,x,\la)=f_0(z,x)+\sum_{k=1}^r\la_k f_k(z,x),
 \qquad f_k\in\R[z,x].
\]
The min-max optimization problem is
\begin{equation}\label{eq:app-minmax}
 f^*\coloneqq \min_{z\in Z}\Psi(z),
 \qquad
 \Psi(z)\coloneqq \max_{(x,\la)\in K}f(z,x,\la).
\end{equation}
The function $\Psi$ is continuous, since $f$ is continuous and the compact
set $K$ is fixed.  In particular, the outer minimum is attained.

For a fixed design $z$, the epigraph constraint $\Psi(z)\leq\gamma$
requires $\gamma-f(z,x,\la)\geq0$ for every $(x,\la)\in K$.
A sufficient condition is the restricted certificate
\begin{equation}\label{eq:app-minmax-certificate}
 \gamma-f(z,x,\la)
 =\sigma_0(x)+\sum_{i=1}^m\sigma_i(x)g_i(x,\la),
 \qquad \sigma_i\in\Sigma[x].
\end{equation}
Here $z$ is a decision vector: the equality is a polynomial identity in
$(x,\la)$ after choosing $z$.
Conversely, whenever $\gamma>\Psi(z)$,
Theorem~\ref{thm:main} guarantees
\eqref{eq:app-minmax-certificate} at some finite degree.
Using the truncated modules of Section~\ref{sec:hierarchy}, define
\begin{equation}\label{eq:app-minmax-hierarchy}
 u_d\coloneqq \inf\{\gamma:z\in Z,
       \gamma-f(z,x,\la)\in\M_{x,d}(g)\}.
\end{equation}
Every feasible pair $(z,\gamma)$ supplies both an admissible design and
the certified worst-case bound $\Psi(z)\leq\gamma$. When $f$ is affine in $z$ and $Z$ is spectrahedral (or has a semidefinite representation using auxiliary
variables), \eqref{eq:app-minmax-hierarchy} is an SDP.

% The corresponding max--min problem and its restricted hierarchy are
% \begin{align}
%  V_{\max\min}
%  &\coloneqq \max_{z\in Z}\Phi(z),\qquad
%    \Phi(z)\coloneqq \min_{(x,\la)\in K}f(z,x,\la),
%    \label{eq:app-maxmin}\\
%  L_d&\coloneqq \sup\{\gamma:z\in Z,
%        f(z,x,\la)-\gamma\in\M_{x,d}(g)\}.
%    \label{eq:app-maxmin-hierarchy}
% \end{align}
% The function $\Phi$ is also continuous.  A feasible pair in
% \eqref{eq:app-maxmin-hierarchy} guarantees $\Phi(z)\geq\gamma$.

\begin{proposition}
\label{prop:app-minmax-convergence}
Under the preceding assumptions, $u_d\searrow f^*$ as $d\to\infty$.
\end{proposition}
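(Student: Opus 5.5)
The proof has two parts: monotonicity together with the lower bound $u_d\geq f^*$, and then convergence from above using Theorem~\ref{thm:main} applied at a single good design.

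\emph{Monotonicity and lower bound.} Since $\Sigma[x]_{\leq2d}\subseteq\Sigma[x]_{\leq2(d+1)}$, the truncated modules are nested, $\M_{x,d}(g)\subseteq\M_{x,d+1}(g)$. Hence every pair $(z,\gamma)$ feasible at order $d$ is feasible at order $d+1$, and $u_{d+1}\leq u_d$. For the lower bound, take any feasible pair $(z,\gamma)$. Evaluating the identity \eqref{eq:app-minmax-certificate} at a point $(x,\la)\in K$ gives $\gamma-f(z,x,\la)\geq0$, because $\sigma_i(x)\geq0$ and $g_i(x,\la)\geq0$. Taking the maximum over $K$ gives $\gamma\geq\Psi(z)\geq f^*$, so $u_d\geq f^*$ for every $d$, including the value $+\infty$ when no pair is feasible.

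\emph{Convergence.} The outer minimum is attained, so fix $z^*\in Z$ with $\Psi(z^*)=f^*$ and fix $\eps>0$. Specialize $f$ at $z=z^*$; the result $f(z^*,\cdot,\cdot)$ lies in $\V_r$, since its coefficients are polynomials in $x$. The polynomial $f^*+\eps-f(z^*,x,\la)\in\V_r$ is then $\geq\eps>0$ on $K$. Because $\M_x(g)$ is restricted Archimedean, Theorem~\ref{thm:main} gives a certificate in $\M_x(g)$. This certificate involves finitely many SOS multipliers of bounded degree, so it belongs to $\M_{x,d}(g)$ for all $d\geq d_\eps$. Thus $(z^*,f^*+\eps)$ is feasible in \eqref{eq:app-minmax-hierarchy}, and $u_d\leq f^*+\eps$ for all $d\geq d_\eps$.

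Combined with the first part, this gives $f^*\leq u_d\leq f^*+\eps$ eventually, and $u_d\searrow f^*$ follows because $\eps$ is arbitrary.

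There is no real obstacle. The only point needing care is that the degree of the certificate depends on $z$, but we only ever use it at the single fixed minimizer $z^*$, so no uniformity in $z$ is required. A minor technicality is the truncation bookkeeping: the relaxation order must be at least $d_{\min}$, computed using $\deg_x$ of $f(z,\cdot)$ maximized over the coefficients in $z$. This is harmless, since we only need large $d$.
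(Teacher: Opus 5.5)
Your proposal is correct and follows essentially the same route as the paper. You obtain the lower bound $f^*\leq u_d$ by evaluating certificates on $K$, and the upper bound by applying Theorem~\ref{thm:main} to $f^*+\eps-f(z^\star,\cdot,\cdot)$ at a single fixed minimizer $z^\star$ (your explicit nestedness argument $\M_{x,d}(g)\subseteq\M_{x,d+1}(g)$ for monotonicity fills in a step the paper leaves implicit in the symbol $\searrow$).
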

\begin{proof}
Nonnegativity of the generators and multipliers on $K$ gives
$f^*\leq u_d$.
Let $z^\star$ attain the minimum in \eqref{eq:app-minmax}.  For every
$\eps>0$,
\[
 f^*+\eps-f(z^\star,x,\la)\geq\eps
 \qquad ((x,\la)\in K).
\]
Theorem~\ref{thm:main} gives a restricted certificate at a finite order,
and hence $u_d\leq f^*+\eps$ for all sufficiently large $d$.
Letting $\eps\to0$ gives the desired result.
\end{proof}

\subsection{Robust polynomial matrix inequalities through scalarization}
Let $F(x,\la)\in\Sym^p[x,\la]$ be affine in $\la$, and define
\[
 \widetilde K\coloneqq 
 \{(x,u,\la):(x,\la)\in K,\ \|u\|^2=1\}.
\]
Strict matrix positivity of $F$ on $K$ is equivalent to
$u^{\intercal}F(x,\la)u>0$ on $\widetilde K$.
Treating $(x,u)$ as the principal variables, the original ball certificate
and the sphere equality give an order bound in $(x,u)$, while the parameter
bounds are unchanged.  Theorem~\ref{thm:equality} therefore gives
\begin{equation}\label{eq:scalarization}
 u^{\intercal}F(x,\la)u
 =\sigma_0(x,u)+\sum_i\sigma_i(x,u)g_i(x,\la)
   +q(x,u)(1-\|u\|^2),
\end{equation}
where the $\sigma_i$'s are SOS and independent of $\la$.
This provides a scalar restricted certificate for a robust polynomial
matrix inequality with any finite number of affine parameters.
The multipliers in \eqref{eq:scalarization} may have high degree in $u$, so
the identity need not collapse to a matrix certificate with SOS-matrix
multipliers depending only on $x$.

\subsection{Analysis and control of input-affine dynamical systems}
\label{subsec:input-affine-systems}
The restricted Positivstellensatz also applies to auxiliary-function
formulations for dynamical systems.  Miller and Sznaier \cite{MillerSznaier2025} eliminated affine uncertainty from Lie inequalities by
parameterized robust counterparts, with continuous and, under additional structural assumptions, polynomial conic multiplier selections.  Here we provide a related decomposition directly from restricted quadratic-module membership, including for
polynomially varying input constraints.

Let $T>0$ and consider
\begin{equation}\label{eq:ia-dynamics}
 \dot x(t)=F(t,x(t),w(t)),\qquad
 F(t,x,w)=F_0(t,x)+\sum_{k=1}^r w_kF_k(t,x),
\end{equation}
where $F_k\in\R[t,x]^n$.  Write $y=(t,x)$ and let
\[
 Y=[0,T]\times X=\{y:a_j(y)\geq0,\ j=1,\ldots,s\},\qquad
 W(y)=\{w:g_i(y,w)\geq0,\ i=1,\ldots,m\},
\]
with $g_i(y,w)=g_{i0}(y)+\sum_{k=1}^r w_kg_{ik}(y)$.
Assume that $Y$ is compact, every $W(y)$ is nonempty, and the restricted quadratic module
\[
 \M_y(a,g)\coloneqq\left\{\sigma_0+\sum_{j=1}^s\eta_ja_j
                +\sum_{i=1}^m\sigma_ig_i:
       \sigma_0,\eta_j,\sigma_i\in\Sigma[y]\right\}
\]
is restricted Archimedean.  In particular, the joint feasible set
$K_Y=\{(y,w):y\in Y,\ w\in W(y)\}$ is compact.
We consider absolutely continuous trajectories that satisfy
\eqref{eq:ia-dynamics} almost everywhere, are driven by measurable admissible inputs,
and remain in $X$ on the time interval under consideration.
Thus the conclusions below concern state-constrained trajectories; they
apply to all trajectories from a given initial set if these remain in $X$.

For $v\in\R[t,x]$, define the Lie derivative
\begin{equation}\label{eq:ia-lie}
 \mathscr L_Fv(y,w)
 =\partial_tv(y)+\nabla_xv(y)^{\intercal}F_0(y)
       +\sum_{k=1}^r w_k\nabla_xv(y)^{\intercal}F_k(y).
\end{equation}
This polynomial is affine in $w$.
Thus Theorem~\ref{thm:main} implies that
$-\mathscr L_Fv>0$ on $K_Y$ if and only if
there exist $\epsilon>0$ and SOS polynomials $\sigma_i,\eta_j$ in $y$ such that
\begin{equation}\label{eq:ia-lie-certificate}
 -\mathscr L_Fv(y,w)-\epsilon
 =\sigma_0(y)+\sum_{j=1}^s\eta_j(y)a_j(y)
                  +\sum_{i=1}^m\sigma_i(y)g_i(y,w).
\end{equation}
Every such identity proves
$v(t,x(t))\leq v(0,x(0))-\epsilon t$ along admissible trajectories.
Matching coefficients of $w$ gives
\begin{align}
 -\partial_tv-\nabla_xv^{\intercal}F_0-\epsilon
   &=\sigma_0+\sum_j\eta_ja_j+\sum_i\sigma_ig_{i0},
       \label{eq:ia-match-constant}\\
 -\nabla_xv^{\intercal}F_k&=\sum_i\sigma_ig_{ik},\qquad k=1,\ldots,r.
       \label{eq:ia-match-input}
\end{align}
For fixed dynamics and generators, these equations are affine in the
coefficients of $v$, the margin, and the SOS Gram matrices.  Any fixed-degree
formulation with a linear objective and affine side constraints is therefore
an SDP.

\paragraph{Polyhedral inputs and robust counterparts}
For a fixed nonempty compact polytope $W=\{w:Aw\leq b\}$,
\eqref{eq:ia-match-constant}--\eqref{eq:ia-match-input} become
\begin{equation}\label{eq:ia-polytope-counterpart}
 A^{\intercal}\sigma=
 \begin{pmatrix}\nabla_xv^{\intercal}F_1\\ \vdots\\ \nabla_xv^{\intercal}F_r\end{pmatrix},
 \qquad
 -\partial_tv-\nabla_xv^{\intercal}F_0-b^{\intercal}\sigma-\epsilon\in\M_y(a),
 \qquad \sigma_i\in\Sigma[y].
\end{equation}
The equations in \eqref{eq:ia-polytope-counterpart} have the Farkas-multiplier
form of the polyhedral robust counterpart in
\cite[Section~4.1]{MillerSznaier2025}. Note that \cite[Theorem~3.2]{MillerSznaier2025} assumes that the matrices
defining the conic input representation are constant in $y$, whereas
equations~\eqref{eq:ia-match-constant}--\eqref{eq:ia-match-input} allow
polynomial $g_{ik}(y)$ under restricted Archimedeanity, without constructing
a continuous selection of pointwise dual multipliers.

\paragraph{Spectrahedral inputs}
Suppose that the input constraints are instead
$G_i(y,w)=G_{i0}(y)+\sum_{k=1}^r w_kG_{ik}(y)\succeq0$.
Under the corresponding matrix restricted-Archimedean hypothesis, the
scalar-objective, matrix-constraint version of the restricted Positivstellensatz gives
\begin{equation}\label{eq:ia-matrix-input}
 -\mathscr L_Fv-\epsilon
 =\sigma_0+\sum_j\eta_ja_j+\sum_i\langle S_i,G_i\rangle,
 \qquad \sigma_0,\eta_j\in\Sigma[y],
\end{equation}
where each $S_i(y)$ is an SOS matrix.
Here $\epsilon>0$ is available when $-\mathscr L_Fv$ is strictly positive.
Coefficient matching gives
$-\nabla_xv^{\intercal}F_k=\sum_i\langle S_i,G_{ik}\rangle$.

For each basic semialgebraic set $D$ used below, $Q(D)$ denotes the
ordinary quadratic module associated with $D$.

\paragraph{Peak, distance, reachability, and control}
Let $X_0\subseteq X$ be a nonempty compact basic semialgebraic initial set,
and let $p\in\R[x]$. Imposing
\begin{equation}\label{eq:ia-peak-inequalities}
 \gamma-v(0,x)\in Q(X_0),\qquad
 v(t,x)-p(x)\in Q(Y),\qquad
 -\mathscr L_Fv\in\M_y(a,g)
\end{equation}
gives $p(x(t))\leq v(t,x(t))\leq v(0,x(0))\leq\gamma$. Thus minimizing $\gamma$ gives certified upper bounds on the peak.

For a compact basic semialgebraic unsafe set $U$ and a polynomial cost
$c(x,z)$, impose
$\gamma\geq v(0,x)$ on $X_0$ and
$v(t,x)+c(x,z)\geq0$ on $Y\times U$.
Together with $-\mathscr L_Fv\in\M_y(a,g)$, these imply
$c(x(t),z)\geq-\gamma$ for every admissible trajectory and $z\in U$.
Thus minimizing $\gamma$ gives certified lower bounds on the minimum cost
of closest approach, including squared Euclidean distance.

For forward reachability, the conditions
\[
 -v(0,\cdot)\in Q(X_0),\qquad
 -\mathscr L_Fv\in\M_y(a,g)
\]
give $\mathcal R_t\subseteq\{x\in X:v(t,x)\leq0\}$, where
$\mathcal R_t$ is the set of states reachable at time $t$.
If additionally $\phi\in Q(X)$ and
$\phi+v(T,\cdot)-1\in Q(X)$, then
$\phi\geq1$ on $\mathcal R_T$, and minimizing
$\int_X\phi(x)\,dx$ yields certified upper bounds on its volume.
For a compact basic semialgebraic target $X_T$, instead impose
\[
 v(T,\cdot)\in Q(X_T),\qquad
 -\mathscr L_Fv\in\M_y(a,g),\qquad
 \phi\in Q(X),\quad \phi-1-v(0,\cdot)\in Q(X).
\]
Every state that can be steered to $X_T$ at time $T$ then satisfies
$v(0,x)\geq0$ and $\phi(x)\geq1$.  This gives the corresponding outer
approximation of the finite-horizon backward reachable set.

For optimal control with running cost $\ell(y,w)$ affine in $w$ and
terminal cost $h$, one instead imposes
\begin{equation}\label{eq:ia-running-cost}
 \mathscr L_Fv+\ell\in\M_y(a,g),\qquad
 h-v(T,\cdot)\in Q(X_T).
\end{equation}
Integration gives
$v(0,x_0)\leq\int_0^{T}\ell(t,x(t),w(t))\,dt+h(x(T))$
for every admissible control ending in $X_T$.

% \paragraph{Discrete time and data-driven models}
% For $x^+=F_0(x)+\sum_kw_kF_k(x)$, the expression
% $v(F(x,w))$ is generally nonlinear in $w$.  Introduce a next-state
% variable $z$ in a prescribed compact set containing the admissible images,
% and the affine equalities
% \[
%  h_\ell(x,z,w)=z_\ell-F_{0\ell}(x)
%                    -\sum_kw_kF_{k\ell}(x)=0,\qquad \ell=1,\ldots,n.
% \]
% With $(x,z)$ as principal variables, Theorem~\ref{thm:equality} certifies
% strict positivity of $\alpha+v(x)-v(z)$ on the transition set by
% \begin{equation}\label{eq:ia-discrete-certificate}
%  \begin{aligned}
%  \alpha+v(x)-v(z)
%   ={}&\sigma_0(x,z)+\sum_j\eta_j(x,z)a_j(x,z)\\
%     &+\sum_i\sigma_i(x,z)g_i(x,w)
%        +\sum_{\ell=1}^n r_\ell(x,z)h_\ell(x,z,w),
%  \end{aligned}
% \end{equation}
% provided the equality-augmented module is restricted Archimedean.
% The multipliers $\sigma_0,\eta_j,\sigma_i$ are SOS, while
% $r_\ell\in\R[x,z]$ are unrestricted.  Here the $a_j$ describe the
% principal-variable domain.  The certificate implies
% $v(x_N)\leq v(x_0)+N\alpha$; with $v\geq p$, $\gamma\geq v$ on $X_0$,
% and $\alpha\geq0$, it yields the peak bound $\gamma+N\alpha$ through
% $N$ steps.  This is the next-state lifting used in
% \cite[Section~5]{MillerSznaier2025}, now combined with the equality version
% of the restricted Positivstellensatz.

%% file: sections/08_experiments.tex
\section{Numerical experiments}\label{sec:experiments}
In this section, we compare the full joint-variable and restricted SOS formulations in terms of relaxation strength, PSD block size, and computational cost. The tests cover
polynomial optimization under box, simplex, and $x$-dependent fiber uncertainty, robust Lyapunov design, and min-max polynomial optimization.
All optimization models were implemented in Julia using {\tt TSSOS 1.5.3}\footnote{{\tt TSSOS} is freely available at \href{https://github.com/wangjie212/TSSOS}{https://github.com/wangjie212/TSSOS}.}~\cite{magron2021tssos} and solved with {\tt MOSEK 11.0}. The experiments were performed on a desktop computer with an Intel Core i9-10900 CPU at 2.80~GHz and 64~GB of RAM.
Throughout this section, ``$N_{\rm full}$'' and ``$N_{\rm res}$'' denote the largest PSD block sizes in the full and restricted formulations, respectively; ``$t_{\rm full}$'' and ``$t_{\rm res}$'' denote the solution times in seconds for the full and restricted formulations, respectively.

\subsection{Box uncertainty}
We begin with the family
\begin{equation}\label{eq:box-benchmark}
\begin{aligned}
 f_{\min}&=\min\left\{f(x,\lambda):
 x\in[-1,1]^n,\ \lambda\in[-1,1]^r\right\},\\
 f(x,\lambda)&=p_0(x)+\sum_{k=1}^r\lambda_kp_k(x),
\end{aligned}
\end{equation}
where $p_0,\ldots,p_r$ are dense cubic polynomials.
For each $(n,r)\in\{4,6,8\}\times\{2,4,6\}$, we generated five instances in which the coefficients of each polynomial were sampled uniformly from $[-1,1]$, after which each polynomial was normalized by its largest coefficient magnitude. The box is described by $1-x_i^2\geq0$, $i=1,\ldots,n$, and $1\pm\lambda_k\geq0$,
$k=1,\ldots,r$.

We solve the full and restricted SOS relaxations for \eqref{eq:box-benchmark} at order $d=3$.
Table~\ref{tab:box-performance} reports median recorded times over instances
for which both formulations returned a bound, together with the ratio of
these medians. The restricted formulation succeeded on all $45$ instances, while the full formulation returned
bounds on $42$ instances and ran out of memory on the remaining three with $(n,r)=(8,6)$. The full and restricted bounds agree to within $10^{-5}$ on $38$ of the $42$ jointly solved instances. On the
remaining four instances, the full relaxation improves the lower bound by between $1.11\times10^{-2}$ and $1.09\times10^{-1}$.
The median speedup ranges from $5.9\times$ to $532.5\times$ across the tested configurations. For each fixed $n$, the computational advantage generally grows with the number of uncertainty parameters, reflecting the rapid growth of the joint-variable PSD blocks avoided by the restricted formulation.

\begin{table}[t]
\centering
\caption{Comparison under box uncertainty, $\theta\coloneqq f^{\rm full}_3-f^{\rm res}_3$.}
\label{tab:box-performance}
\begin{tabular}{ccrrrrrr}
\toprule
$n$ & $r$ & $N_{\rm full}$ & $N_{\rm res}$ & $t_{\rm full}$ & $t_{\rm res}$
& speedup & $\max|\theta|$ \\
\midrule
4 & 2 &  84 &  35 &   0.147 & 0.025 &   $5.9\times$ & $1.58\times10^{-2}$ \\
4 & 4 & 165 &  35 &   1.676 & 0.038 &  $44.1\times$ & $1.09\times10^{-1}$ \\
4 & 6 & 286 &  35 &  14.013 & 0.046 & $304.6\times$ & $8.53\times10^{-2}$ \\
6 & 2 & 165 &  84 &   1.640 & 0.146 &  $11.2\times$ & $8.51\times10^{-7}$ \\
6 & 4 & 286 &  84 &  12.272 & 0.215 &  $57.1\times$ & $1.11\times10^{-2}$ \\
6 & 6 & 455 &  84 & 109.164 & 0.205 & $532.5\times$ & $1.11\times10^{-6}$ \\
8 & 2 & 286 & 165 &  11.435 & 1.732 &   $6.6\times$ & $4.73\times10^{-7}$ \\
8 & 4 & 455 & 165 &  94.024 & 2.066 &  $45.5\times$ & $9.80\times10^{-7}$ \\
8 & 6 & 680 & 165 & 911.939 & 2.387 & $382.1\times$ & $9.79\times10^{-9}$ \\
\bottomrule
\end{tabular}
\end{table}

We next examine how the bounds for \eqref{eq:box-benchmark} change with the relaxation order.
In this experiment, $p_0,\ldots,p_r$ are chosen to be dense quartic polynomials.
We generated three instances for each of $(n,r)=(3,2)$ and $(4,3)$, and reused each fixed instance at orders $d=3,4,5$, giving $18$ paired relaxation comparisons. The results are presented in Table~\ref{tab:convergence-performance}.
For these instances, the restricted formulation
occasionally requires one additional relaxation order to reach the
same bound as the full formulation, while substantially reducing PSD block sizes. For example, for $(n,r)=(4,3)$, increasing $d$ from
three to five enlarges the main full block from $120$ to $792$, compared
with $35$ to $126$ for the restricted formulation.  The corresponding
median recorded times are $327.195$ and $0.730$ seconds, respectively.

\begin{table}[t]
\centering
\caption{Comparison of convergence, $\theta\coloneqq f_d^{\rm full}-f_d^{\rm res}$.}
\label{tab:convergence-performance}
\setlength{\tabcolsep}{4pt}
\begin{tabular}{cccrrrrrr}
\toprule
$n$ & $r$ & $d$ & $N_{\rm full}$ & $N_{\rm res}$
& $t_{\rm full}$ & $t_{\rm res}$ & speedup & $\max|\theta|$ \\
\midrule
3 & 2 & 3 &  56 &  20 &   0.126 & 0.016 &   $7.9\times$ & $3.44\times10^{-1}$ \\
3 & 2 & 4 & 126 &  35 &   0.730 & 0.063 &  $11.6\times$ & $6.07\times10^{-7}$ \\
3 & 2 & 5 & 252 &  56 &   4.946 & 0.111 &  $44.6\times$ & $1.30\times10^{-6}$ \\
4 & 3 & 3 & 120 &  35 &   0.826 & 0.032 &  $25.8\times$ & $4.66\times10^{-8}$ \\
4 & 3 & 4 & 330 &  70 &  14.427 & 0.190 &  $75.9\times$ & $1.05\times10^{-6}$ \\
4 & 3 & 5 & 792 & 126 & 327.195 & 0.730 & $448.2\times$ & $1.38\times10^{-6}$ \\
\bottomrule
\end{tabular}
\end{table}

\subsection{Simplex uncertainty}
We now replace the box by a simplex and consider
\begin{equation}\label{eq:simplex-benchmark}
\begin{aligned}
 f_{\min}&=\min\left\{f(x,\lambda):
 x\in[-1,1]^n,\ \lambda\in\Delta_r\right\},\\
 f(x,\lambda)&=\left(1-\sum_{k=1}^r\lambda_k\right)p_0(x)
 +\sum_{k=1}^r\lambda_kp_k(x),\\
 \Delta_r&=\left\{\lambda\in\mathbb R^r:
 \lambda_k\geq0,\ k=1,\ldots,r,\ 1-\sum_{k=1}^r\lambda_k\geq0\right\},
\end{aligned}
\end{equation}
where $p_0,\ldots,p_r$ are dense quartic polynomials. 
For each $(n,r)\in\{4,6,8\}\times\{2,4,6\}$, we generated five instances in which the coefficients of each polynomial were sampled uniformly from $[-1,1]$, after which each polynomial was normalized by its largest coefficient magnitude.

We solve the full and restricted SOS relaxations for \eqref{eq:simplex-benchmark} at order $d=3$.
Table~\ref{tab:simplex-performance} reports median recorded times over instances
for which both formulations returned a bound, together with the ratio of these medians for each configuration. The restricted formulation succeeded on all $45$ instances, while the full formulation returned
bounds on $41$ instances and ran out of memory on the remaining four with $(n,r)=(8,6)$. The two bounds agree to within $10^{-5}$ on $38$ of the $41$ jointly solved instances. On the
remaining three instances, the full relaxation improves the lower bound by between $8.04\times10^{-3}$ and $8.28\times10^{-3}$. The recorded speedup reaches $208.4\times$ for $(n,r)=(6,6)$.
For the single recorded $(8,6)$ instance, the full and restricted times are
$927.056$ and $4.515$ seconds, respectively, while the largest PSD block
size decreases from $680$ to $165$. As in the box experiments, the
restricted formulation substantially reduces block sizes, with a
possible loss in bound strength at a fixed order.

\begin{table}[t]
\centering
\caption{Comparison under simplex uncertainty, $\theta\coloneqq f^{\rm full}_3-f^{\rm res}_3$.}
\label{tab:simplex-performance}
\begin{tabular}{ccrrrrrr}
\toprule
$n$ & $r$ & $N_{\rm full}$ & $N_{\rm res}$ & $t_{\rm full}$ & $t_{\rm res}$
& speedup & $\max|\theta|$ \\
\midrule
4 & 2 &  84 &  35 &   0.138 & 0.023 &   $6.0\times$ & $2.23\times10^{-7}$ \\
4 & 4 & 165 &  35 &   1.293 & 0.028 &  $46.2\times$ & $3.28\times10^{-8}$ \\
4 & 6 & 286 &  35 &   8.953 & 0.050 & $179.1\times$ & $8.28\times10^{-3}$ \\
6 & 2 & 165 &  84 &   1.518 & 0.199 &   $7.6\times$ & $8.18\times10^{-3}$ \\
6 & 4 & 286 &  84 &  10.321 & 0.230 &  $44.9\times$ & $5.54\times10^{-8}$ \\
6 & 6 & 455 &  84 &  77.507 & 0.372 & $208.4\times$ & $4.44\times10^{-7}$ \\
8 & 2 & 286 & 165 &  11.098 & 1.980 &   $5.6\times$ & $4.69\times10^{-7}$ \\
8 & 4 & 455 & 165 &  85.914 & 2.330 &  $36.9\times$ & $1.28\times10^{-7}$ \\
8 & 6 & 680 & 165 & 927.056 & 4.515 & $205.3\times$ & $2.01\times10^{-9}$ \\
\bottomrule
\end{tabular}
\end{table}

% Simplex uncertainty also provides an independent reference calculation.
% Since $f$ is affine in $\lambda$,
% \begin{equation}\label{eq:simplex-vertex-reference}
%  \min_{\lambda\in\Delta_r}f(x,\lambda)=\min_{0\leq k\leq r}p_k(x),
%  \qquad
%  f_{\min}=\min_{0\leq k\leq r}\min_{x\in[-1,1]^n}p_k(x).
% \end{equation}
% The minimum of the SOS lower bounds for these $r+1$ polynomial optimization
% problems in $x$ therefore gives a reference lower bound.  The recorded vertex
% reference computations all succeeded.  Their bounds differ from the standard
% bounds by at most $1.39\times10^{-6}$ and agree with the restricted bounds to
% within $10^{-5}$ on the same $38$ instances.  This comparison corroborates
% the observed agreement of the relaxations, but does not by itself certify
% attainment of the global optimum.

\subsection{\texorpdfstring{$x$}{x}-dependent uncertainty fibers}
To test the framework beyond Cartesian-product sets, consider
\begin{equation}\label{eq:xdependent-benchmark}
\begin{aligned}
 f_{\min}&=\min\left\{f(x,\lambda):
 x\in[-1,1]^n,\ \lambda\in\Lambda(x)\right\},\\
 f(x,\lambda)&=p_0(x)+\sum_{k=1}^r\lambda_kp_k(x),\\
 \Lambda(x)&=\left\{\lambda\in\mathbb R^r:
 -h_k(x)\leq\lambda_k\leq h_k(x),\ k=1,\ldots,r\right\},\\
 h_k(x)&=1-\alpha x_{k'}^2,\qquad
 k'=1+\operatorname{mod}(k-1,n).
\end{aligned}
\end{equation}
Here $p_0,\ldots,p_r$ are dense cubic polynomials and
$\alpha=0.5$.  Since $1/2\leq h_k(x)\leq1$ on $[-1,1]^n$, every fiber is
nonempty and uniformly bounded.  For each
$(n,r)\in\{4,6,8\}\times\{2,4,6\}$, we generated five instances in which the coefficients of each polynomial were sampled uniformly from $[-1,1]$, after which each polynomial was normalized by its largest coefficient magnitude.  The defining
inequalities are $1-x_i^2\geq0$ and $h_k(x)\pm\lambda_k\geq0$.

We solve the full and restricted SOS relaxations for \eqref{eq:xdependent-benchmark} at order $d=3$.
Table~\ref{tab:xdependent-performance} reports median recorded times over instances
for which both formulations returned a bound, together with the ratio of these medians for each configuration.
The restricted formulation succeeded on all $45$ instances, while the full formulation returned a bound on $43$ instances and reported
an out-of-memory error on two of the five instances with $(n,r)=(8,6)$.
The two bounds agree to within $10^{-5}$ on $34$ of the $43$ jointly solved instances.  On the remaining nine instances, the full formulation
improves the lower bound by between $1.97\times10^{-3}$ and
$2.48\times10^{-1}$.
The reduction remains substantial for varying fibers. At $(n,r)=(6,6)$,
the median recorded times are $113.476$ and $0.420$ seconds, a ratio of
$270.2\times$. At $(8,6)$, the largest block size decreases from $680$
to $165$, with paired median times of $922.642$ and $4.509$ seconds.

\begin{table}[t]
\centering
\caption{Comparison under $x$-dependent uncertainty fibers, $\theta\coloneqq f^{\rm full}_3-f^{\rm res}_3$.}
\label{tab:xdependent-performance}
\setlength{\tabcolsep}{4pt}
\begin{tabular}{ccrrrrrr}
\toprule
$n$ & $r$ & $N_{\rm full}$ & $N_{\rm res}$ & $t_{\rm full}$ & $t_{\rm res}$
& speedup & $\max|\theta|$ \\
\midrule
4 & 2 & 84 & 35 & 0.251 & 0.070 & $3.6\times$ & $1.58\times10^{-7}$ \\
4 & 4 & 165 & 35 & 2.583 & 0.130 & $19.9\times$ & $1.47\times10^{-1}$ \\
4 & 6 & 286 & 35 & 13.817 & 0.119 & $116.1\times$ & $1.10\times10^{-1}$ \\
6 & 2 & 165 & 84 & 2.551 & 0.272 & $9.4\times$ & $1.75\times10^{-7}$ \\
6 & 4 & 286 & 84 & 15.092 & 0.364 & $41.5\times$ & $2.48\times10^{-1}$ \\
6 & 6 & 455 & 84 & 113.476 & 0.420 & $270.2\times$ & $5.52\times10^{-7}$ \\
8 & 2 & 286 & 165 & 13.208 & 3.066 & $4.3\times$ & $1.56\times10^{-2}$ \\
8 & 4 & 455 & 165 & 115.282 & 4.523 & $25.5\times$ & $2.08\times10^{-1}$ \\
8 & 6 & 680 & 165 & 922.642 & 4.509 & $204.6\times$ & $6.52\times10^{-7}$ \\
\bottomrule
\end{tabular}
\end{table}

\subsection{Robust Lyapunov design}
Next, we seek a common polynomial Lyapunov function for the uncertain
system
\begin{equation}\label{eq:control-benchmark}
 \dot x=F(x,\lambda)
 =\left(1-\sum_{k=1}^r\lambda_k\right)F_0(x)
 +\sum_{k=1}^r\lambda_kF_k(x),
 \qquad \lambda\in\Delta_r,
\end{equation}
where $\Delta_r$ is the simplex in~\eqref{eq:simplex-benchmark}.
The generated cubic vector fields have the form
\[
 F_j(x)=-D_jx+K_jx+q_j(x)+\beta_j\|x\|^2x,
 \qquad j=0,\ldots,r,
\]
where $D_j$ is a positive diagonal matrix, $K_j$ is skew-symmetric, and
$q_j$ is a homogeneous cubic field satisfying $x^\intercal q_j(x)=0$.
The positive coefficients $\beta_j$ satisfy
$\beta_jR^2<\lambda_{\min}(D_j)$.  Consequently,
$V_0(x)=\|x\|^2$ satisfies
\begin{equation}\label{eq:control-quadratic-margin}
 -\nabla V_0(x)^\intercal F(x,\lambda)
 \geq \delta\|x\|^2,
 \qquad
 \delta
 \coloneqq2\min_{0\leq j\leq r}
 \bigl\{\lambda_{\min}(D_j)-\beta_jR^2\bigr\}>0
\end{equation}
on $\{x:\|x\|\leq R\}\times\Delta_r$.  This gives an explicit feasible
reference margin for every generated instance.

The experiments use $R=1$, five random seeds for each
$(n,r)\in\{(4,2),(4,3),(6,2),(6,3)\}$, and common Lyapunov
polynomials of degree at most four.  To remove the scaling ambiguity, the
quadratic part is fixed:
\[
 V(x)=\|x\|^2+\sum_{3\leq|\alpha|\leq4}c_\alpha x^\alpha.
\]
The benchmark imposes $|c_\alpha|\leq10$ and
$V(x)-10^{-4}\|x\|^2\in\Sigma[x]$.  Each formulation maximizes
$\eta\geq0$ subject to an SOS certificate of degree $8$ for
\[
 p(x,\lambda)=-\nabla V(x)^\intercal F(x,\lambda)-\eta\|x\|^2.
\]
Since $V$ is quartic and the
vector field is cubic, $p$ has $x$-degree at most six and is affine in
$\lambda$, hence has total degree at most seven. Let $\eta^{\rm full}$ and $\eta^{\rm res}$ be the optimal values returned by the full and restricted formulations, respectively.

% We also use the vertex formulation
% \begin{equation}\label{eq:control-vertex-relaxation}
%  -\nabla V(x)^\intercal F_j(x)-\rho\|x\|^2
%  =\sigma_{0,j}(x)+\sigma_{x,j}(x)(R^2-\|x\|^2),
%  \qquad j=0,\ldots,r,
% \end{equation}
% where $\deg\sigma_{0,j}\leq8$ and
% $\deg\sigma_{x,j}\leq6$.  All vertex conditions share the same $V$ and
% $\rho$.  Because the derivative condition is affine in $\lambda$, robust
% nonnegativity on the simplex is equivalent to nonnegativity at its vertices.
% Moreover, evaluating a full certificate at a simplex vertex produces a
% feasible vertex certificate at the same SOS order.  Therefore, in exact
% arithmetic,
% \[
%  f^{\rm res}_4\leq f^{\rm full}_4\leq f^{\rm vert}_4.
% \]

Table~\ref{tab:control-performance}
reports median recorded times over instances
and their ratios for each configuration.
The two formulations produce essentially the same margins at the scale of
the experiment.
The optimized quartic Lyapunov functions substantially improve the known
quadratic reference margins from the range $1.2949$--$1.642$ to $2.0173$--$2.4385$ across the $20$ instances.
The model-size reduction is substantial. For $(n,r)=(4,3)$, the full formulation has a largest PSD block of size $330$,
compared with a largest block of size $70$ in the
restricted formulation.  For $(n,r)=(6,3)$, the largest block sizes are $715$ and $210$, respectively.
The smaller restricted SDPs also translate into lower recorded times
in every tested configuration.  Relative to the restricted formulation, the median solution time of the full formulation is between $15.4$ and $61.7$ times larger.

\begin{table}[t]
\centering
\caption{Comparison for robust Lyapunov design, $\theta\coloneqq \eta^{\rm full}-\eta^{\rm res}$.}
\label{tab:control-performance}
\setlength{\tabcolsep}{3pt}
\begin{tabular}{ccrrrrrr}
\toprule
$n$ & $r$ & $N_{\rm full}$ & $N_{\rm res}$
& $t_{\rm full}$ & $t_{\rm res}$ & speedup
& $\max|\theta|$ \\
\midrule
4 & 2 & 210 &  70 &   9.975 &  0.535 & $18.7\times$ & $2.41\times10^{-5}$ \\
4 & 3 & 330 &  70 &  39.936 &  0.647 & $61.7\times$ & $3.58\times10^{-5}$ \\
6 & 2 & 495 & 210 & 170.077 & 11.035 & $15.4\times$ & $6.35\times10^{-5}$ \\
6 & 3 & 715 & 210 & 895.424 & 39.484 & $22.7\times$ & $9.27\times10^{-5}$ \\
\bottomrule
\end{tabular}
\end{table}

\subsection{Min-max polynomial optimization}\label{subsec:minmax-experiments}
We finally consider the robust design problem
\begin{equation}\label{eq:minmax-benchmark}
\begin{aligned}
 \min_{z\in[-1,1]^4}\;
 \max_{\substack{x\in[-1,1]^n\\\lambda\in[-1,1]^r}}
 f(z,x,\lambda),\\
 f(z,x,\lambda)&=q_0(z,x)+\sum_{k=1}^r\lambda_kq_k(z,x),\\
 q_k(z,x)&=p_{0k}(x)+\sum_{j=1}^4 z_jp_{jk}(x),
 \qquad k=0,\ldots,r.
\end{aligned}
\end{equation}
Here the $p_{jk}$'s are dense cubic polynomials generated as in the box
benchmark.  For each
$(n,r)\in\{4,6,8\}\times\{2,4,6\}$, we generated five instances, giving $45$ instances in total.
The inner feasible set is independent of the design variable $z$.

We solve the full and restricted SOS relaxations for \eqref{eq:minmax-benchmark} at order $d=3$.
Table~\ref{tab:minmax-performance} reports median times over instances
for which both formulations succeeded, together with
the ratio of these medians. The full formulation ran out of memory on four instances with $(n, r) = (8, 6)$. The two bounds agree to within $10^{-5}$ on two instances, while the largest absolute difference is $1.3452$.  The relative difference
$(f_3^{\rm res}-f_3^{\rm full})/|f_3^{\rm full}|\times100\%$ has mean $1.209\%$,
median $0.700\%$, and maximum $6.537\%$; it is below $1\%$ on $26$ of
$41$ instances.
For $n=6$, increasing $r$ from $2$ to $6$
raises the median full time from $7.034$ to $521.029$ seconds, whereas
the restricted time changes from $0.795$ to $0.876$ seconds. Across the $41$ jointly solved instances, the restricted formulation reduces the total recorded
time from $9340.235$ to $112.400$ seconds, a factor of $83.1$.

\begin{table}[t]
\centering
\caption{Comparison for min-max polynomial optimization, $\theta\coloneqq f_3^{\rm res}-f_3^{\rm full}$.}
\label{tab:minmax-performance}
\setlength{\tabcolsep}{4pt}
\begin{tabular}{ccrrrrrr}
\toprule
$n$ & $r$ & $N_{\rm full}$ & $N_{\rm res}$
& $t_{\rm full}$ & $t_{\rm res}$ & speedup & $\max|\theta|$ \\
\midrule
4 & 2 &  84 &  35 &    0.714 & 0.074 &   $9.7\times$ & $5.37\times10^{-1}$ \\
4 & 4 & 165 &  35 &   13.023 & 0.102 & $128.1\times$ & $5.48\times10^{-1}$ \\
4 & 6 & 286 &  35 &  138.928 & 0.152 & $916.6\times$ & $1.35\times10^{0}$ \\
6 & 2 & 165 &  84 &    7.034 & 0.795 &   $8.9\times$ & $3.28\times10^{-1}$ \\
6 & 4 & 286 &  84 &   65.381 & 0.843 &  $77.6\times$ & $2.80\times10^{-1}$ \\
6 & 6 & 455 &  84 &  521.029 & 0.876 & $594.9\times$ & $8.07\times10^{-1}$ \\
8 & 2 & 286 & 165 &   63.073 & 8.747 &   $7.2\times$ & $1.93\times10^{-2}$ \\
8 & 4 & 455 & 165 &  481.614 & 8.940 &  $53.9\times$ & $3.89\times10^{-1}$ \\
8 & 6 & 680 & 165 & 2845.474 & 9.541 & $298.3\times$ & $5.09\times10^{-1}$ \\
\bottomrule
\end{tabular}
\end{table}

\vspace{1em}
\paragraph{Summary of the numerical results}
Across all benchmark families, the restricted formulation substantially
reduces the largest PSD block size, and the computational benefit generally becomes larger as the number of uncertainty parameters or the relaxation order increases.  The
median speedups range from $3.6\times$ to $532.5\times$ in the three direct
polynomial-optimization experiments, reach $448.2\times$ in the
relaxation-order comparison, range from $15.4\times$ to $61.7\times$ in the
robust Lyapunov experiment, and reach $916.6\times$ in the min--max
experiment.  The restricted formulation also solves all tested instances in
the direct optimization and min--max experiments, including several of the largest instances for which the full formulation ran out of memory.

These savings can come with weaker bounds at a fixed order.  For the box,
simplex, and $x$-dependent uncertainty experiments, the two formulations
agree numerically on most instances, while visible discrepancies occur on a
minority of cases; the relaxation-order experiment shows that such gaps can
decrease to near numerical precision after increasing the order.
The robust Lyapunov margins agree within $9.27\times10^{-5}$, whereas the
min--max experiment exhibits a more noticeable loss, with a mean relative
difference of $1.209\%$ and a maximum of $6.537\%$.  Overall, the experiments
show that the restricted formulation provides a substantial scalability advantage,
with a problem-dependent tradeoff between computational cost and relaxation strength.

%% file: sections/10_conclusions.tex
\section{Conclusions and further directions}\label{sec:conclusion}
We have developed a positivity theory that preserves affine
dependence on auxiliary parameters. Under the restricted Archimedean hypothesis, every strictly positive scalar polynomial admits an SOS certificate whose multipliers depend only on the principal variables.
This yields a convergent restricted moment-SOS hierarchy for polynomial optimization problems with affine parameters.
We have also extended the scalar theorem to polynomial-matrix constraints with any number of affine parameters and to matrix-valued objectives with one affine parameter. The numerical experiments have revealed substantial reductions in PSD block sizes and solution times, while the bound loss is often marginal.

Several directions remain open. 1) A quantitative degree bound for the restricted Positivstellens\"atze has not yet been derived. 2) Correlative and term sparsity could be combined to further reduce SDP sizes \cite{WakiEtAl2006,WangMagronLasserre2021}. 3) Can the restricted Positivstellens\"atze be generalized to other structured settings or to the noncommutaive case? 4) For matrix-valued objectives with several parameters, it would also be useful to identify intermediate structural assumptions---weaker than
uniform free positivity but stronger than scalar pointwise positivity---that
restore a restricted certificate theorem.